\newcommand{\R}{\mathbb{R}}
\newcommand{\Z}{\mathbb{Z}}
\newcommand{\N}{\mathbb{N}}
\newcommand{\s}{\mathcal{S}}
\def\v{\varphi}
\def\r{\rho}
\def\n{\mathcal{N}}
\def\f{\mathcal{F}}
\def\a{\alpha}
\def\G{\Gamma}
\def\b{\beta}
\newcommand{\ie}{{\em i.$\,$e.$\!$} }
\newcommand{\e}{\varepsilon}
\newcommand{\g}{\gamma}
\newcommand{\ep}{\epsilon}
\newcommand{\M}{\mathcal{M}}
\newcommand{\NF}{\mathcal{NF}}
\def\ll{\Lambda}
\def\beq{\begin{equation}}
\def\eeq{\end{equation}}
\newcommand\numberthis{\addtocounter{equation}{1}\tag{\theequation}}
\newtheorem{theorem}{Theorem}[section]
\newtheorem{lemma}[theorem]{Lemma}
\newtheorem*{claim*}{Claim}
\newtheorem{proposition}[theorem]{Proposition}
\newtheorem{observation}[theorem]{Observation}
\theoremstyle{definition}
\theoremstyle{remark}
\DeclareMathOperator{\supp}{supp}
\numberwithin{equation}{section}
\title[Boundedness of the (Sub)Bilinear maximal function]{The Boundedness of the (Sub)Bilinear Maximal Function along ``non-flat" smooth curves}
\author{Alejandra Gaitan and Victor Lie}
\date{\today}
\address{Alejandra Gaitan: Department of Mathematics\\Purdue University\\
West Lafayette, Indiana\\47906}
\email{ygaitanm@purdue.edu}
\address{Victor Lie: Department of Mathematics\\Purdue University\\
West Lafayette, Indiana\\47906}
\email{vlie@purdue.edu}
\address{And Institute of Mathematics of the
Romanian Academy, Bucharest, RO 70700, P.O. Box 1-764, Romania.}
\thanks{The second author was supported by the National Science Foundation under Grant No. DMS-1500958. The final revision of the paper before publication was performed while the second author was supported by the National Science Foundation under Grant No. DMS-1900801.}
\keywords{Bilinear Hilbert transform and maximal operators along ``non-flat" curves, (non)stationary phase, Littlewood-Paley theory, shifted maximal function, shifted square function.}
\begin{document}

\begin{abstract}
Let $\NF$ be the class of smooth non-flat curves near the origin and near infinity introduced in \cite{lie2015boundedness} and let $\gamma\in\NF$. We show - via a unifying approach relative to the corresponding bilinear Hilbert transform $H_{\Gamma}$ - that the (sub)bilinear maximal function along curves $\Gamma=(t,-\gamma(t))$ defined as $$M_\Gamma(f,g)(x):=\sup\limits_{\e>0} \frac{1}{2\e} \int_{-\e}^\e |f(x-t)g(x+\gamma(t))|dt$$
is bounded from $L^p(\R)\times L^{q}(\R)\to L^r(\R)$ for all $p, q$ and $r$ H\"older indices, \ie $\frac{1}{p}+\frac{1}{q}=\frac{1}{r}$, with $1<p,\,q\leq\infty$ and $1\le r\leq\infty$. This is the maximal boundedness range for $M_{\Gamma}$, that is, our result is \emph{sharp}.
\end{abstract}

\maketitle

%%%%%%%%%%%%%%%%%%%%%%%%%%%%%%%%%%%%%%%%%%%%%%%%%%%%%%%%%%%%%%%%%%%%%%%%%%%%%%%%
%%%%%%%%%%%%%%%%%%%%%%%%%%%%%%%%%%%%%%%%%%%%%%%%%%%%%%%%%%%%%%%%%%%%%%%%%%%%%%%%
%%%% Introduction
%%%%%%%%%%%%%%%%%%%%%%%%%%%%%%%%%%%%%%%%%%%%%%%%%%%%%%%%%%%%%%%%%%%%%%%%%%%%%%%%
\section{Introduction}

In this paper we study the boundedness properties of the bilinear\footnote{Throughout the paper we allow this slight abuse by referring to our maximal operator as ``bilinear", though of course, strictly speaking, this is a sub-linear operator in each of the two inputs.} maximal function along a properly chosen class of curves. That is, given $\Gamma=(t,-\gamma(t))$ where $\gamma$ is a suitable smooth, non-flat curve near zero and near infinity, we ask about the boundedness properties of the bilinear maximal function along $\Gamma$ defined by
\begin{align}
 M_\Gamma&:\s(\R)\times\s(\R)\to L^{\infty}(\R) \nonumber\\
 M_\Gamma(f,g)(x)&:=\sup\limits_{\e>0} \frac{1}{2\e} \int_{-\e}^\e |f(x-t)g(x+\gamma(t))|dt.\label{definition}
\end{align}

Seen as a forerunner to \cite{lvUA3}, this paper presents a short proof of the maximal boundedness range for $M_{\Gamma}$. This proof along with the corresponding approach to the singular integral version $H_{\Gamma}$ in \cite{lie2015boundedness} and \cite{lie2015triangle} emerge as constituent parts of a whole. Together with the work in \cite{lie2019unifiedap} and its follow-up \cite{lvUA3}, this should be regarded as part of a larger enterprise to provide a unified perspective on several central themes in harmonic analysis that deal with the boundedeness properties of various classes of singular integral and maximal operators.

\subsection{Historical background and motivation}

The problem regarding the boundedness properties of the maximal bilinear operator $M_{\G}$ and of its singular integral analogue defined by
\beq\label{nhilb}
H_{\Gamma}(f,g)(x):= \textrm{p.v.}\int_{\R} f(x-t)\,g(x+\g(t))\frac{dt}{t}
\eeq
has a long history being initially motivated in areas such as ergodic theory - in relation with almost everywhere convergence of bilinear averages (\cite{Bou}) or the $L^p$-norm convergence of (non-)conventional bilinear averages (see e.g. \cite{Fu}, \cite{HKr}) as well as PDE area - in relation to commutators involving differential operators (\cite{Cal1}, \cite{calderon1978commutators}).

A brief account\footnote{Our bibliography here is not exhaustive, listing only the references that are closest and most relevant to our concise study.} of the development of this subject within harmonic analysis is given by
\begin{itemize}
\item in the zero-curvature (or flat) case, \emph{i.e.} when $\g(t)=a t$ for some $a\in\R\setminus\{-1\}$, the problem of providing bounds for the bilinear Hilbert transform $H_{\Gamma}$ was raised by A. Calder\'on in his study of the Cauchy transform along Lipschitz curves, \cite{Cal}. This was approached by M. Lacey and C. Thiele in  \cite{lt1} and \cite{lt2} where they proved that $H_{\Gamma}$ obeys $L^p\times L^q \to L^r$ bounds\footnote{It is still an open problem if this range is optimal.} for $\frac{1}{p}+\frac{1}{q}=\frac{1}{r}$ with $1<p,\,q\leq \infty$ and $\frac{2}{3}<r<\infty$. The analogous result for the operator $M_{\Gamma}$ was proved by Lacey in \cite{la3}.

\item  in the nonzero-curvature (or non-flat) case, \emph{e.g.} when $\g''$ is nonzero near origin and infinity,  the problem of providing bounds for $H_{\Gamma}$ was first addressed by X. Li (\cite{li}), in the special case $\Gamma(t) = (t, t^d)$, $2 \leq d \in \mathbb N$, by proving that $H_{\Gamma}$ obeys $L^{2}\times L^{2} \to L^{1}$ bounds. His proof relies on the concept of $\sigma$-uniformity introduced in \cite{cltt} and is inspired by  Gowers's work in \cite{gowers}.

    In \cite{lie2015boundedness} and \cite{lie2015triangle} the second author of the present paper proved the maximal range up to end-points for $H_{\G}$ where here $\g$ belongs to a suitable class of curves $\n\f$ that includes in particular any generalized (Laurent) real polynomial with no linear term.\footnote{\emph{I.e.}, any expression of the form $\g(t):=\sum_{j=1}^d a_j t^{\a_j}$ with $\{a_j\}_{j=1}^{d}\subset\R$, $\{\a_j\}_{j=1}^{d}\subset\R\setminus\{1\}$ and $d\in\N$. Here, we use the following convention: if $\a,\,t\in \R$ we let $t^{\a}$ stand for either $|t|^{\a}$ or $\textrm{sgn}\, (t)\, |t|^{\a}$. In a follow up paper, \cite{AG}, the first author extends the present result (and its singular integral analogue in \cite{lie2015boundedness} and \cite{lie2015triangle}) to the case in which one allows the curve $\g$ to be a generalized polynomial but with the linear term included.} Besides the general character of the class of curves $\n\f$, a novelty of \cite{lie2015boundedness}  is that it correctly identifies a scale-type decay relative to the size of the phase of the multiplier. The proof of this result resides on wave-packet analysis (Gabor frames), time-frequency discretization techniques, and orthogonality methods.

    Later, elaborating on the ideas and techniques in both \cite{li} and \cite{lie2015boundedness}, the authors in \cite{liXiao2016uniform} prove for both $H_{\Gamma}$  and $M_{\Gamma}$ the expected H\"older range in the case $\g(t)$ a standard polynomial with no linear term with bounds that are uniform in the polynomial's coefficients.
\end{itemize}

For more on the historical evolution of our problem and further connections with other mathematical subjects one is invited to consult \cite{lie2019unifiedap}.

\subsection{The main result}
 In what follows, we refer to \cite[Section 2]{lie2015boundedness} for the definition of the class $\NF$ of smooth ``non-flat'' curves near zero and infinity.

With this we can state the main theorem of our paper:

\begin{theorem}\label{main_theorem}
 Let $\Gamma=(t,-\gamma(t))$ be a curve such that\footnote{It is easy to notice that our main result extends to the class of curves $\NF^{C}$ that is defined to be the set of all curves $\g\,+\,C$ with $\g\in\NF$ and $C\in\R$, \textit{i.e.}, our result is closed under translation by constants of $\g$ with $\g\in\NF$.} $\gamma\in\NF$. Consider the bilinear maximal function defined by \eqref{definition}.

 Then $M_\Gamma$ extends boundedly from $L^p(\R)\times L^{q}(\R)$ into $L^r(\R)$ where the indices $p, q, r$ obey
	\begin{equation}\label{exponents10}
	\frac{1}{p}+\frac{1}{q}=\frac{1}{r},
	\end{equation}
	and\footnote{Observe that one gets trivially the desired bound for the triple of indices $(p,q,r)=(\infty,\infty,\infty)$ corresponding to the point $C$ in Figure \ref{triangle_fig}. That is why we will exclude this case from all our future reasonings.}
	\begin{equation}\label{exponents20}
	1<p\leq\infty\,,\, 1<q\leq\infty \mbox{ and } 1\leq r\leq \infty.
	\end{equation}
This result is sharp.
\end{theorem}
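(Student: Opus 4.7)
The plan is to adapt the Littlewood--Paley / time--frequency apparatus developed in \cite{lie2015boundedness} and \cite{lie2015triangle} for the singular integral $H_{\Gamma}$ and to upgrade it so that it also controls the supremum over $\e$. First, by standard bilinear interpolation inside the H\"older triangle, it suffices to establish endpoint bounds $L^{p}\times L^{\infty}\to L^{p}$ (and its mirror) for $1<p<\infty$ together with the diagonal $L^{2}\times L^{2}\to L^{1}$ estimate. Second, the sup over continuous $\e>0$ is reduced to a sup over dyadic $\e=2^{j}$: setting
\begin{equation*}
A_{j}(f,g)(x):=2^{-j}\int f(x-t)\,g(x+\g(t))\,\chi(t/2^{j})\,dt, \qquad j\in\Z,
\end{equation*}
for a non-negative smooth bump $\chi$, one has $M_{\Gamma}(f,g)\lesssim \sup_{j}A_{j}(|f|,|g|)$, and I linearize this supremum by a measurable choice $j\colon\R\to\Z$.

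Next, perform a Littlewood--Paley decomposition on each input, $f=\sum_{m}P_{m}f$, $g=\sum_{n}P_{n}g$, and sort the resulting bilinear pieces $A_{j}(P_{m}f,P_{n}g)$ by the size of the \emph{rescaled} frequencies $m+j$ and $n+j$. In the low--low region ($m+j,\,n+j\lesssim 0$) the pointwise estimate
\begin{equation*}
|A_{j}(P_{m}f,P_{n}g)(x)|\lesssim M(P_{m}f)(x)\,M(P_{n}g)(x),
\end{equation*}
with $M$ the Hardy--Littlewood maximal operator, followed by H\"older is enough. In the mixed high--low regime the symbol of $A_{j}$ has no stationary point in $t$, and integration by parts exploiting the oscillation of the high-frequency factor delivers decay $2^{-\delta\max(m+j,n+j)}$ at the price of a spatial shift; the shift is absorbed by a \emph{shifted} maximal function whose $L^{p}$ norm grows only polylogarithmically in the shift, ensuring summability in $m,n,j$.

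The main obstacle is the high--high region $m+j\simeq n+j\gg 0$. Here the bilinear multiplier
\begin{equation*}
\M_{j}(\xi,\eta)=\int e^{-i t\xi+i\g(t)\eta}\,\chi(t/2^{j})\,dt
\end{equation*}
carries a non-degenerate stationary point, and the non-flatness condition $\g\in\NF$ supplies a quantitative lower bound on $|\g''|$ at that point and hence a power-type single-scale decay for $\M_{j}$. Translating this single-scale decay into an $L^{r}$ bound for the linearized operator $(f,g)\mapsto A_{j(x)}(f,g)(x)$ requires a \emph{shifted square-function} estimate in the spirit of \cite{lie2015boundedness}, now responsible for exchanging the supremum in $j$ for orthogonality on the frequency side after suitable time-frequency discretization. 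This shifted square function is the genuinely new ingredient beyond the case of $H_{\Gamma}$ and is the step I expect to demand the most technical care.

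Finally, sharpness of the H\"older range is routine: the constraint $r\ge 1$ is excluded by testing on two bump functions supported on well-separated intervals (the output is bounded below on an interval of comparable length to the separation, while the $L^{r}$-quasi-norm of the inputs collapses for $r<1$), while $p=1$ or $q=1$ are ruled out by the classical approximation-to-identity example, placing an approximate delta in one slot against a constant in the other.
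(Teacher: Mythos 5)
Your overall plan follows the same stationary-phase/Littlewood--Paley route as the paper, but there is a genuine gap in the very first reduction. You claim that bilinear interpolation from the two edges $L^{p}\times L^{\infty}\to L^{p}$, $L^{\infty}\times L^{q}\to L^{q}$, together with the single diagonal bound $L^{2}\times L^{2}\to L^{1}$, fills the H\"older triangle. It does not: in $(\tfrac1p,\tfrac1q)$-coordinates the convex hull of $\{(\tfrac12,\tfrac12)\}\cup\{(a,0):0\le a<1\}\cup\{(0,b):0\le b<1\}$ covers only $\{\tfrac1p+\tfrac1q<1\}\cup\{(\tfrac12,\tfrac12)\}$, so the entire hypotenuse $r=1$, $p\ne 2$, is missed. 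This is precisely the hard part of the paper: Theorems \ref{smallk20} and \ref{lpsmgr} establish $\|T_{j(x),m}\|_{L^1}\lesssim 2^{-\epsilon m}\|f\|_2\|g\|_2$ and $\|T_{j(x),m}\|_{L^1}\lesssim m\,\|f\|_p\|g\|_{p'}$ separately, then interpolate these two \emph{in the decay parameter $m$} (not in the Lebesgue exponents) to get a summable $m$-decay at every point of the open edge $(AB)$; the $L^p\times L^{p'}\to L^1$ bound for $p\ne 2$ in turn rests on Rubio de Francia's square-function inequality and Fefferman--Stein, combined with the elementary domination $|T_{j(x),m}(f,g)(x)|\le\sum_{j}|T_{j,m}(f,g)(x)|$ plus almost-orthogonality of the rescaled frequency supports. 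None of this is recoverable from your reduction, so you would need to add a direct argument for the $r=1$ edge.

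Your sharpness paragraph is also not quite right, and it differs in substance from what the paper actually proves. The separated-bumps construction you describe is the standard obstruction in the \emph{flat} case and does not, for a fixed non-flat $\g$, force $r\ge 1$ (for $\g(t)=t^2$ the critical threshold is expected to be below $1$). What the paper proves at the vertices $A$ and $B$ are the elementary unboundedness statements $L^1\times L^\infty\not\to L^1$ and $L^\infty\times L^1\not\to L^1$ via explicit functions and $\g(t)=t^2$. The claimed sharpness of $r\ge 1$ is a statement about the \emph{whole class} $\NF$: it is obtained by citing the construction of \cite{liXiao2016uniform}, which produces, for each degree $d$, a polynomial $P_d\in\mathcal P_d\subset\NF$ whose associated $M_\Gamma$ is unbounded for $r<\frac{d-1}{d}$; letting $d\to\infty$ pins the uniform-over-$\NF$ threshold at $r=1$. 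Your approximate-identity example does rule out $p=1$ and $q=1$, which is fine, but the $r\ge 1$ claim needs the class-wide argument, not a single test pair. Finally, a smaller point: when you sort the bilinear pieces by ``rescaled frequencies $m+j$ and $n+j$,'' remember that the $\eta$-variable must be rescaled with the extra factor $\g'(2^{-j})$ (as in \eqref{mjmn}); without it the diagonal $\Delta$ does not coincide with the stationary-point locus of the phase \eqref{phmultiplier}, and the stationary-phase gain in the high--high region would be misplaced.
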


\begin{observation}
Let $\mathcal{P}_d$ be the class of all real polynomials of degree $d$ with no linear and constant terms. In \cite[Section 3]{liXiao2016uniform}, the authors show that for any $d\in\N$, $d\ge2$, there exists a polynomial
$P_d\in\mathcal{P}_d$ such that for $\Gamma=(t,P(t))$ one has that $M_\Gamma$ is unbounded whenever $p, q, r$ obey \eqref{exponents10} with $1<p, q<\infty$ and $r<\frac{d-1}{d}$.

However, for any $d\in\N$, $d\ge2$ we have that $\mathcal{P}_d\subset\NF$. Thus, in order for \eqref{exponents10} and \eqref{exponents20} to hold for any $\gamma\in\NF$ we must have $r\ge 1$, hence the claimed optimality of the range in our theorem above.
\end{observation}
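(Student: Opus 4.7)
The plan is to chain together three elementary ingredients: (i) the quoted counterexample from \cite{liXiao2016uniform}, (ii) the containment $\mathcal{P}_d\subset\NF$ for every integer $d\ge 2$, and (iii) a limiting argument in $d$ that pushes the exponent threshold $\frac{d-1}{d}$ up to $1$. Since the unboundedness statement itself is imported wholesale from \cite[Section 3]{liXiao2016uniform}, the only genuine content to verify is the class containment and the limit.

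For the containment $\mathcal{P}_d\subset\NF$, I would unpack the definition of $\NF$ from \cite[Section 2]{lie2015boundedness} and check it directly against an arbitrary $P(t)=\sum_{j=2}^{d} a_j t^{j}\in\mathcal{P}_d$. Concretely, denoting by $j_{0}$ the smallest index with $a_{j_{0}}\ne 0$ and by $j_{1}=d$ the largest, one verifies the defining asymptotic/non-flatness conditions near $0$ and near $\infty$: near the origin $P(t)\sim a_{j_{0}}t^{j_{0}}$ with $j_{0}\ge 2\ne 1$, while near infinity $P(t)\sim a_{d}t^{d}$ with $d\ge 2\ne 1$, and the corresponding second derivatives behave like nonzero monomials of positive (respectively, nonzero) exponent at the two ends. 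Equivalently, one invokes the footnote remark in the main text stating that every generalized real polynomial with no linear term lies in $\NF$, specialized to standard polynomials.

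Once the containment is in place, the argument becomes a one-line contradiction. Fix a triple $(p,q,r)$ with $\tfrac{1}{p}+\tfrac{1}{q}=\tfrac{1}{r}$, $1<p,q<\infty$ and $r<1$. Choose $d\in\N$ so large that $r<\frac{d-1}{d}$, which is possible since $\frac{d-1}{d}\to 1^{-}$. By \cite[Section 3]{liXiao2016uniform}, there exists $P_{d}\in\mathcal{P}_d$ such that the bilinear maximal operator $M_{\Gamma_{d}}$ associated with $\Gamma_{d}=(t,P_{d}(t))$ fails to map $L^p\times L^q\to L^r$. Since $\Gamma_{d}$ corresponds to a curve $\gamma=-P_{d}\in\mathcal{P}_d\subset\NF$ (closed under a sign change, which preserves the defining non-flatness conditions), this would contradict the hypothetical bound in Theorem \ref{main_theorem} for this $\gamma$. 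Hence any valid range for $\NF$ curves must satisfy $r\ge 1$, which, combined with $\tfrac{1}{p}+\tfrac{1}{q}=\tfrac{1}{r}$ and $p,q>1$, matches exactly \eqref{exponents20} and proves sharpness.

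The only step requiring real care is the class containment $\mathcal{P}_d\subset\NF$, since $\NF$ is defined through several technical conditions on derivatives and asymptotic behavior at $0$ and $\infty$; this is the main (and essentially only) obstacle, and it reduces to a routine check on monomial asymptotics. Everything else—the existence of the polynomial counterexamples and the elementary limit $\frac{d-1}{d}\to 1$—is inherited directly from \cite{liXiao2016uniform}.
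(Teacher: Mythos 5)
Your proposal is correct and follows essentially the same route as the paper: import the Li--Xiao counterexample, use the containment $\mathcal{P}_d\subset\NF$ (which the paper justifies by its footnote on generalized polynomials with no linear term), and let $d\to\infty$ so that $\frac{d-1}{d}\to 1^{-}$ forces $r\geq 1$. Your additional remark about the sign change $\gamma=-P_d$ (to reconcile the two papers' conventions for $\Gamma$) is a harmless clarification since $\mathcal{P}_d$ and $\NF$ are both closed under negation, and it does not constitute a genuinely different argument.
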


\subsection{Main ideas; relevance}\label{MIR}

As already mentioned earlier, our approach of the maximal operator $M_{\G}$ is developed along a natural correspondence with the singular integral approach associated to the bilinear Hilbert transform $H_{\G}$. For the remaining part of this section, in order to make our reasoning transparent, we will keep our presentation at an \emph{informal level}.

The philosophy behind our approach relies on the following observation:

The bilinear Hilbert transform $H_{\G}$ can be written as
\begin{equation}\label{hilg}
H_{\G}(f,g)=\sum_{j\in\Z} H_j(f,g):=\sum_{j\in\Z} \int_{\R} f(x-t)\,g(x+\g(t)) \r_j(t)\,dt\,,
\end{equation}
where  here $\rho_j(t)=2^j\,\rho(2^j t)$ for a suitable $\rho\in C^\infty_0(\R)$ with $\supp \rho \subseteq\{t\in\R|\frac{1}{4}<|t|<1\}$ and obeying the \emph{mean zero condition}
\begin{equation}\label{mzh}
\int_{\R}\rho(t)dt=0\,.
\end{equation}

In contrast with this, assuming from now on wlog that $f,\,g\geq 0$, the bilinear maximal function $M_{\G}$ can be expressed as
\begin{equation}\label{mg}
M_{\G}(f,g)=\sup_{j\in\Z} M_j(f,g):=\sup_{j\in\Z} \int_{\R} f(x-t)\,g(x+\g(t))\underline{\r}_j(t)\,dt\,,
\end{equation}
where  as before $\underline{\rho}_j(t)=2^j\,\underline{\rho}(2^j t)$ for a suitable $\underline{\rho}\in C^\infty_0(\R)$ \emph{positive} with $\supp \underline{\rho} \subseteq\{t\in\R|\frac{1}{4}<|t|<1\}$ and obeying
\begin{equation}\label{nzh}
\int_{\R}\underline{\rho}(t)dt=1\,.
\end{equation}

\noindent \textsf{Remark 1} \emph{Thus, in a nutshell, one has
\begin{itemize}
\item $H_{\G}$ is a conditional $l^1$-sum of pieces $\{H_j\}_{j\in\Z}$ with the associated kernels $\{\r_j\}$ having mean zero;
\item $M_{\G}$ is an $l^{\infty}$-sum of pieces $\{M_j\}_{j\in\Z}$ with the associated kernels $\{\underline{\r}_j\}$ integrating to one.
\end{itemize}}

Once at this point, one can isolate the corresponding components $M_j$ (and $H_j$) and regard them as bilinear Fourier multiplier operators. In doing so, it becomes transparent that the phase oscillation in the integral definition of the multiplier will play a key role in the proof. Thus, following the strategy designed for $H_{\G}$ in \cite{lie2015boundedness}, we analyze the stationary points of the phase and decompose accordingly\footnote{Same type of decomposition holds for $H_j$.}
\begin{equation}\label{multiplierM}
 M_j=M_j^L+M_j^{H\not\Delta}+M_j^{H\Delta}\,,
\end{equation}
where
\begin{itemize}
\item $M_j^L$ is the $j-$low frequency component (essentially no phase oscillation);

\item $M_j^{H\not\Delta}$ is the $j-$high frequency component away from the stationary points region;

\item $M_j^{H\Delta}$ is the $j-$high frequency component along the stationary points region.
\end{itemize}

\noindent \textsf{Remark 2} \emph{As it turns out, only the first components $M_j^L$ and its Hilbert analogue $H_j^L$ are sensitive to the existent distinction
between \eqref{nzh} and \eqref{mzh}. Consequently, for the last two components one will be able to identify $M_j^{H\not\Delta}$ and $M_j^{H\Delta}$ with their analogue $H_j^{H\not\Delta}$ and $H_j^{H\Delta}$, respectively.}

From here on, the strategy follows the dichotomy present in \cite{lie2015boundedness}, \cite{lie2015triangle} and \cite{lie2019unifiedap}:

\begin{itemize}
\item the control over the low frequency component $\M^L_\Gamma:=\sup_j |M_j^L|$ is obtained via Taylor series expansions exploiting the lack of oscillation on the multiplier side; indeed, Theorem \ref{low_frequency_theorem} states that\footnote{Here $M$ stands for the standard Hardy-Littlewood maximal function.}
\begin{equation}\label{lf}
 |\M_\Gamma^L(f,g)(x)|\lesssim_\gamma Mf(x)Mg(x).
\end{equation}

\item the bounds for the high frequency component away from the stationary points region $\M^{H\not\Delta}_\Gamma:=\sup_{j} |M_j^{H\not\Delta}|$ rely on a further discretization combined with (non)-stationary principle in disguise - essentially a careful integration by parts procedure - and a novel shifted square/maximal function argument. As a byproduct of the latter, each of the elementary building blocks in the decomposition of $\M^{H\not\Delta}_\Gamma$ will be pointwise bounded by a product of shifted maximal functions (multiplied by a suitable decaying factor) thus mirroring \eqref{lf} (see relation \eqref{mvst}). The superposition of these pointwise estimates will provide us with the global control over $\M^{H\not\Delta}_\Gamma$.
     This is the content of Theorem \ref{far_theorem}.

\item the high frequency component along the stationary points region defined as $\M^{H\Delta}_\Gamma:=\sup_{j}| M_j^{H\Delta}|$ is of course the main term of our operator. After the linearization of the supremum, one decomposes the main term as
 \begin{equation}\label{decm}
 M_{j(x)}^{H\Delta}(f,g)(x)=\sum_{m\in\N}  M_{j(x),m}^{H\Delta}(f,g)(x)\,,
\end{equation}
where each $M_{j(x),m}^{H\Delta}(f,g)(x)$ has the phase of the multiplier oscillating at height $\approx2^m$.

Applying now Remark 2, for $j(x)=j$, one can identify $M_{j,m}^{H\Delta}(f,g)$ with the analogue $H_{j,m}^{H\Delta}(f,g)$ and thus use the key estimate obtained in \cite[Theorem 3]{lie2015boundedness} to get that there exists $\epsilon\in(0,1)$ such that, for any $j\in \Z$ and $m\in\N$, one has
\begin{equation}\label{l2cs}
 \|M^{H\Delta}_{j,m}(f,g)\|_{L^1}\lesssim_{\gamma} 2^{-\epsilon m}\|f\|_2\,\|g\|_{2}.
\end{equation}

At this point hinted by Remark 1 and the approach in \cite{lie2015boundedness}, one makes the simple observation
\begin{equation}\label{l2css}
 \|M^{H\Delta}_{j(x),m}(f,g)(x)\|_{L^1(dx)}\leq \sum_{j} \|M^{H\Delta}_{j,m}(f,g)\|_{L^1}\lesssim_{\gamma} 2^{-\epsilon m}\|f\|_2\,\|g\|_{2}\,,
\end{equation}
where in the last line we used Cauchy-Schwarz and the almost orthogonality of the inputs along the sequence $\{M^{H\Delta}_{j,m}(f,g)\}_{j\in\Z}$. This takes care of the bound $L^2\times L^2 \to L^1$.

The complete boundedness range, stated in Theorem \ref{high_frequency_diagonal theorem} relies in part on the techniques developed in \cite{lie2015triangle} along with (shifted or generalized) square function arguments.
\end{itemize}

The above description is part of a more general, philosophical approach - see \cite{lie2019unifiedap} - of treating simultaneously and in a unitary fashion both the singular - here $H_{\Gamma}$ - and its maximal variant - here $M_{\G}$.

Finally, this paper is meant as a preface to the significantly more complex study in \cite{lvUA3}, in which, completing the unification of the three themes introduced in \cite{lie2019unifiedap}, we will develop a unified approach for the boundedness of $H_{\G}$ and $M_{\G}$ in the case in which $\Gamma=(t,-\gamma(x,t))$ - thus allowing an $x-$dependence of $\g$ - where here $\g$ is a suitable non-degenerate curve that is smooth and doubling in $t$ but only \emph{measurable} in $x$.

\section{Notation}

Without lost of generality, from now on throughout the paper, we will assume that $f$ and $g$ are non-negative functions. For transparency, we will mostly follow the notations and conventions as in \cite{lie2015boundedness}.

For example, given any $\phi\in\s(\R)$, $j,\,l\in\Z$ and $k\in\N$, we set
\begin{align}
\phi_j(x)&:=\phi\big(\frac{x}{2^j}\big),\label{notation2}\\
\tilde\phi_{l}(x)&:=x^l\phi(x),\label{notation21}\\
\phi^{(k)}(x)&:=\left(\frac{d}{dx}\right)^k\phi(x),\label{notation22}\\
\tilde\phi_{l,j}(x)&:=\tilde\phi_l\big(\frac{x}{2^j}\big). \label{notation23}
\end{align}

If $f\in \s(\R)$ we denote the Fourier transform of $f$ with $\hat{f}$, where\footnote{In our later reasonings we will often ignore the constant $\frac{1}{\sqrt{2\pi}}$.}
$$\hat{f}(\xi):=\frac{1}{\sqrt{2\pi}}\int_{\R} f(x)\,e^{-i x \xi}\,d x\,,\:\:\:\:\:\:\xi\in\R\,$$
and the inverse Fourier transform of $f$ with $\check{f}$, where
$$\check{f}(\xi):=\frac{1}{\sqrt{2\pi}}\int_{\R} f(x)\,e^{i x \xi}\,d x\,,\:\:\:\:\:\:\xi\in\R\,.$$

We denote by $M$ the standard Hardy-Littlewood maximal operator defined on $L^1_\text{loc}(\R)$ as
\[Mf(x):=\sup\limits_{\substack{I\ni x\\ I\text{ interval}}}\frac{1}{|I|}\int_I |f(x)|dx.\]

For $t\in\R$ and $\v\in C^\infty_0$ with $\v(0)=0$ we define the $t$-\emph{shifted square functions} with respect to $\v$ by
\begin{equation}\label{shif_max}
S_t^{\v}f(x):=\Big(\sum\limits_{j\in\Z}\Big|(f* (2^j\check{\v}(2^j\cdot)))\big(x-\frac{t}{2^j} \big)\Big|^2\Big)^\frac{1}{2}\,,
\end{equation}
and the $t$-\emph{shifted maximal function} by\footnote{As expected, in the maximal function case, the mean zero condition of $\v$ becomes irrelevant and one can replace the original conditions imposed on $\v$ by merely $\check{\v}\in \s(\R)$ any (normalized) function with $\check{\v}\geq 0$ and $\|\check{\v}\|_{L^1(\R)}=1$. Given this, in what follows we no longer specify the dependence of our maximal function on $\v$.}
\begin{equation}\label{shif_maxM}
M^{(t)}f(x):=\sup\limits_{j\in\Z}\Big|(f* (2^j\check{\v}(2^j\cdot)))\big(x-\frac{t}{2^j} \big)\Big|\,.
\end{equation}

If $\frac{1}{4} <|t|<1$, $\v\in C^\infty_0$ with $\v(0)=0$, $n\in\Z $ and $\g\in\NF$ we define the $(n,\g_t)$-\emph{shifted square functions} with respect to $\v$ by

\begin{equation}\label{shif_maxgama}
S_{n,\g_t}^{\v}f(x):=\Big(\sum\limits_{j\in\Z}\Big|\Big(f*\big\{\frac{1}{\tfrac{2^{-j}\gamma'(2^{-j})}{2^n}} \check{\v} (\frac{\cdot}{\tfrac{2^{-j}\gamma'(2^{-j})}{2^n}})\big\}\Big) (x+\gamma(2^{-j}t))\Big|^2\Big)^\frac{1}{2}\,,
\end{equation}
and the $(n,\g_t)$-\emph{shifted maximal function} by
\begin{equation}\label{shif_maxgamaM}
M^{(2^n)}_{\g_t}f(x):=\sup\limits_{j\in\Z}\Big|\Big(f*\big\{\frac{1}{\tfrac{2^{-j}\gamma'(2^{-j})}{2^n}} \check{\v} (\frac{\cdot}{\tfrac{2^{-j}\gamma'(2^{-j})}{2^n}})\big\}\Big) (x+\gamma(2^{-j}t))\Big|\,.
\end{equation}

Throughout the paper $p^*=\min\{p,p'\}$ where $p'$ denotes de H\"older conjugate of $p$.

Also we set $a_+:=\max\{a,0\}$.

For $A, B>0$, we say that $A\lesssim B$ if there is $C>0$ such that $A\le CB$.  We say that $A\approx B$ if $A\lesssim B$ and $B\lesssim A$. Finally, if $d$ is a real parameter, we write $A\lesssim_d B$ if there exists $C=C(d)>0$ such that $A\le CB$ with the obvious correspondence for $A\approx_d B$.

\section{Preliminaries: The bilinear maximal function as a multiplier operator; Reduction to the main component}

In this section, we first reshape the maximal operator in a convenient form adapted to Fourier analytic methods followed by a careful analysis of the associated multiplier. As a result, we will be able to decompose our initial operator in three components: the first two of them - considered as ``error terms" - will be treated in the present section, while the remaining one - \emph{i.e} the main component - will be left for the next sections.

Focusing now on the main subject, we record the following simple observation: since we deal with a positive integral operator, it is enough to study our maximal function with the supremum ranging over dyadic numbers, \textit{i.e.}, letting  $\e\sim 2^{j+1}$ with $j\in\Z$, we have that
\begin{equation}\label{discretization}
M_{\Gamma}(f,g)(x)\approx\sup\limits_{j\in\Z}\frac{1}{2^{j+1}}\int\limits_{2^j\le|t|\le2^{j+1}} f(x-t)\,g(x+\gamma(t))\,dt.
\end{equation}
Based on the properties of the curves $\g\in\NF$ we have that\footnote{The condition discussed below will be needed when discussing the boundedness of our operator $M_{\Gamma}(f,g)$ in the regime $L^{\infty}\times L^q$ for $1<q\leq \infty$ - see Subsection 5.2.}
\begin{align}
\exists\:\:\textrm{(possibly large)}&\:\: j_0\in\N\:\:\textrm{depending only on}\:\g\:\textrm{such that} \label{threshold}\\
\textrm{for any nonzero}&\:|t|\leq 2^{-j_0}\:\textrm{or}\:|t|\geq 2^{j_0}\:\textrm{one has}\nonumber\\
|\g(t)|>0,\:\: &|\g'(t)|>0,\:\:\textrm{and}\:\:|\g''(t)|>0\:.\label{threshold1}
\end{align}
With this, based on \eqref{discretization}, we notice that
\begin{align}
M_{\Gamma}(f,g)(x)&\lesssim \sup\limits_{|j|>j_0}\frac{1}{2^{j+1}}\int\limits_{2^j\le|t|\le2^{j+1}} f(x-t)\,g(x+\gamma(t))\,dt\,\label{discretizationsI}\\
&+\,\sum_{|j|\leq j_0}\frac{1}{2^{j+1}}\int\limits_{2^j\le|t|\le2^{j+1}} f(x-t)\,g(x+\gamma(t))\,dt\,.\label{discretizationsII}
\end{align}

Now for the latter term, using triangle and Minkowski's inequality, we trivially have
\begin{align*}
&\:\:\:\:\left\|\sum_{|j|\leq j_0} \frac{1}{2^{j+1}} \int\limits_{2^j\le|t|\le2^{j+1}} f(x-t)\,g(x+\gamma(t))\,dt\right\|_{L^r(dx)}\\
&\leq  \sum_{|j|\leq j_0} \int\limits_{2^j\le|t|\le2^{j+1}} \|f(x-t)\,g(x+\gamma(t))\|_{L^r(dx)}\,\frac{dt}{2^{j+1}}\\
&\lesssim_{p,r} \,j_0\,\|f\|_p\,\|g\|_{q}\,,
\end{align*}
with $p,\,q,\,r$ satisfying \eqref{exponents10} and \eqref{exponents20}.

Thus, with a slight notational abuse, we will assume from now on that
\begin{equation}\label{discretizationc}
M_{\Gamma}(f,g)(x)=\sup\limits_{{j\in\Z}\atop{|j|>j_0}}\frac{1}{2^{j+1}}\int\limits_{2^j\le|t|\le2^{j+1}} f(x-t)\,g(x+\gamma(t))\,dt.
\end{equation}

Let now $\underline{\rho}$ be a nonnegative, even, $C^\infty_0(\R)$ function with $\supp \underline{\rho} \subseteq\{t\in\R|\frac{1}{4}<|t|<1\}$ and
\begin{equation}\label{intones}
\int\underline{\rho}(t)dt=1\,.
\end{equation}

Set $\underline{\rho}_j(t)=2^j\underline{\rho}(2^j t)$ (with $j\in\Z$).

With this, standard reasonings show that $M_{\Gamma}\approx \M_{\Gamma}$, where here
\begin{equation}
\M_{\Gamma}(f,g)(x):=\sup\limits_{{j\in\Z}\atop{|j|>j_0}}\M_{\Gamma,j}(f,g)(x):=\sup\limits_{{j\in\Z}\atop{|j|>j_0}}\int_\R f(x-t)g(x+\gamma(t))\underline{\rho}_j(t)dt\,.
\end{equation}

Turning our attention on the Fourier side, we have that
\begin{equation}\label{M_multiplier}
 \M_\Gamma(f,g)(x)=\sup\limits_{{j\in\Z}\atop{|j|>j_0}} \int_\R\int_\R \hat{f}(\xi)\hat{g}(\eta)m_j(\xi,\eta)e^{i\xi x}e^{i\eta x}d\xi d\eta\,,
\end{equation}
where the multiplier is given by
\begin{equation}\label{M_multiplierr}
 m_j(\xi,\eta):=\int_\R e^{-i\xi t}e^{i\eta \gamma(t)} \underline{\rho}_j(t)dt
=\int_\R e^{-i\frac{\xi}{2^j} t}e^{i\eta \gamma(\frac{t}{2^j})} \underline{\rho}(t)dt.
\end{equation}

Since the integrand in $m_{j}$ is highly oscillatory, the analysis of our multiplier relies fundamentally
on understanding the stationary points of the phase function
\begin{equation}\label{phmultiplier}
 \varphi_{\xi,\eta}(t):=-\frac{\xi}{2^{j}}t+\eta\gamma\big(\frac{t}{2^{j}}\big)\:.
\end{equation}

Thus, as in \cite{lie2015boundedness} and \cite{lie2019unifiedap}, it is natural to decompose the multiplier based on the size of the terms $\frac{\xi}{2^{j}}$ and $\frac{\gamma'(2^{-j})\eta}{2^{j}}$.
Let now $\phi$ be a positive even Schwartz function supported in $\{t:\frac{1}{2}\le|t|\le 2\}$ with
$$\sum_{n\in\Z}\phi_n(t)=1\:\:\: \textrm{for all}\:\:\:t\not=0\,.$$

Then, for every $j\in\Z\setminus [-j_0,\,j_0]$, we write
\begin{equation}
m_j=\sum_{m,n\in\Z} m_{j,m,n},
\end{equation}
where
\begin{equation}\label{mjmn}
m_{j,m,n}(\xi,\eta):=m_j(\xi,\eta)\,\phi(\frac{\xi}{2^{m+j}})\,\phi\Big(\frac{\gamma'(2^{-j})\eta}{2^{n+j}}\Big)\,.
\end{equation}

From the stationary phase principle  we have that the main contribution for the integrand in $m_j$ comes from the values near the stationary point(s). With this, we follow the approach in \cite{lie2015boundedness}, and split the analysis of our multiplier into three components, corresponding to the behavior of the phase in \eqref{phmultiplier}, as follows:

\begin{enumerate}
 \item[I)] \underline{\emph{Low frequency case}} - the phase function is essentially constant:
\begin{equation}\label{low_freq}
m_j^L=\sum_{(m,n)\in(\Z_-)^2} m_{j,m,n}.
\end{equation}
\item[II)] \underline{\emph{High frequency far from diagonal}} - high oscillation without stationary points:
\begin{equation}\label{high_far}
  m_j^{H\not\Delta}=\sum_{(m,n)\in\Z^2\setminus((\Z_-)^2\cup\Delta)}m_{j,m,n}.
\end{equation}
\item[III)] \underline{\emph{High frequency close to diagonal}} - high oscillation with present stationary points:
\begin{equation}\label{high-close}
 m_j^{H\Delta}=\sum_{(m,n)\in\Delta}m_{j,m,n}.
\end{equation}
\end{enumerate}
Here $\Delta=\{(m,n)\in\Z^2:m,n\ge0,|m-n|\le C(\gamma)\}$ with $C(\gamma)\ge 1$ a large constant depending only on $\gamma$.

With this, we have
\begin{equation}\label{multiplier}
 m_j=m_j^L+m_j^{H\not\Delta}+m_j^{H\Delta}.
\end{equation}

We end our preliminaries by transforming the maximal nature of our operator via a linearization procedure
\begin{equation}\label{multiplier1}
  \M_\Gamma(f,g)(x)\approx\int_\R\int_\R \hat{f}(\xi)\hat{g}(\eta)m_{j(x)}(\xi,\eta)e^{i\xi x}e^{i\eta x}d\xi d\eta
\end{equation}
where $j(x):\R\to\Z\setminus [-j_0,\,j_0]$ is a measurable function who assigns for each point $x\in\R$ a value $j\in \Z\setminus [-j_0,\,j_0]$ for which $\M_{\Gamma,j}(f,g)(x)$ is at least half of the value of $\M_\Gamma(f,g)(x)$.

%%%%%%%%%%%%%%%%%%%%%%%%%%%%%%%%%%%%%%%%%%%%%%%%%%%%%%%%%%%%%%%%%%%%%%%%%%%%%%%%
%%%%%%%%%%%%%%%%%%%%%%%%%%%%%%%%%%%%%%%%%%%%%%%%%%%%%%%%%%%%%%%%%%%%%%%%%%%%%%%%
%%%%%%%%%%%%%%%%%%%%%%%%%%%%%%%%%%%%%%%%%%%%%%%%%%%%%%%%%%%%%%%%%%%%%%%%%%%%%%%%
%%%% subsection - Low frequency terms
\subsection{Low frequency term}\label{low_frequency-subsection}
%% Pointwise bound with the HL Maximal function
As discussed in \cite{lie2019unifiedap}, the dichotomy between the singular integral behavior and its maximal version manifests precisely in the low frequency case: indeed, this is the only situation that requires the mean zero condition of the kernel $\frac{1}{t}$ in the bilinear Hilbert transform case which translates, after the standard discretization procedure, into a condition of the from \eqref{mzh} as opposed to \eqref{nzh} in the maximal case. Based on this, the bilinear Hilbert transform required similar techniques with those used to prove the Coifman-Meyer theorem in order to obtain the necessary bounds for the corresponding low frequency component (see \cite[Theorem 1]{lie2015boundedness}).

In the bilinear maximal function case, our function $\underline{\rho}$ only satisfies $\int\underline{\rho}(t)dt=1$ but the good news is that - recall Remark 1 in Section 1.3. -  we only need to control an $l^{\infty}(\Z)$-sum/norm (in $j$) as opposed to a conditional $l^1(\Z)$-sum in the bilinear Hilbert transform case.

With this being said, we have

\begin{theorem}\label{low_frequency_theorem}
Set
\begin{equation}\label{operator_low_freq}
 \M^L_\Gamma(f,g)(x):=\int_\R \int_\R \hat{f}(\xi)\hat{g}(\eta)m_{j(x)}^L(\xi,\eta)e^{i\xi x}e^{i\eta x}d\xi d\eta.
\end{equation}
Then, the following holds
\begin{equation}\label{pointwise_maximal}
 |\M_\Gamma^L(f,g)(x)|\lesssim_\gamma Mf(x)Mg(x).
\end{equation}
Furthermore, for any $p, q$ and $r$ satisfying \eqref{exponents10} and \eqref{exponents20} we have
\begin{equation}\label{pointwise_maximal_bound}
 \|\M_\Gamma^L(f,g)\|_r\lesssim_{\gamma,p,q}\|f\|_p\|g\|_{q}.
\end{equation}
\end{theorem}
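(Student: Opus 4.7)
My plan is to first establish the pointwise estimate \eqref{pointwise_maximal}, from which \eqref{pointwise_maximal_bound} follows immediately by H\"older's inequality and the Hardy--Littlewood maximal theorem on $L^p$ for $1<p\le\infty$ (the trivial bound $\|Mf\|_\infty\le\|f\|_\infty$ handles the $p=\infty$ case). In view of the linearization \eqref{multiplier1}, the heart of the matter is thus to prove $|\M_{\Gamma,j}^L(f,g)(x)|\lesssim_\gamma Mf(x)\,Mg(x)$ with a constant uniform in $|j|>j_0$.

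To do this I will exploit the defining feature of the low-frequency region: on the support of $m_j^L$, namely $|\xi|\lesssim 2^j$ and $|\gamma'(2^{-j})\eta|\lesssim 2^j$, the phase $\varphi_{\xi,\eta}(t)$ is $O(1)$ on $\supp\underline{\rho}$, so $e^{i\varphi_{\xi,\eta}(t)}$ can be replaced by its Taylor series. Introducing the rescaled variables $u:=\xi/2^j$, $v:=\gamma'(2^{-j})\eta/2^j$, and the rescaled profile $G_j(t):=2^j\gamma(t/2^j)/\gamma'(2^{-j})$, the expression \eqref{M_multiplierr} becomes
\[
m_j(\xi,\eta)=\int_{\R} e^{-iut+ivG_j(t)}\,\underline{\rho}(t)\,dt.
\]
Taylor-expanding in $u$ and $v$ and integrating term-by-term produces the factorized expression
\[
m_j^L(\xi,\eta)=\sum_{k,l\ge 0}\frac{c_{k,l}(j)}{k!\,l!}\,\bigl[u^k\Phi_L(u)\bigr]\bigl[v^l\Phi_L(v)\bigr],\qquad c_{k,l}(j):=(-i)^k i^l\int_{\R} t^k G_j(t)^l\,\underline{\rho}(t)\,dt,
\]
where $\Phi_L$ is the smooth cutoff to $|s|\lesssim 1$ implicit in the definition of $m_j^L$. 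On the physical side this reads
\[
\M_{\Gamma,j}^L(f,g)(x)=\sum_{k,l\ge 0}\frac{c_{k,l}(j)}{k!\,l!}\,(f*\psi_{k,j})(x)\,(g*\chi_{l,j})(x),
\]
with $\psi_{k,j}$ and $\chi_{l,j}$ being $L^1$-normalized Schwartz bumps at scales $2^{-j}$ and $2^j/\gamma'(2^{-j})$, respectively, whose Schwartz seminorms grow only polynomially in $k,l$. The standard pointwise estimate $|f*\psi|(x)\lesssim Mf(x)$ for $L^1$-normalized bumps then bounds each summand by $Mf(x)\,Mg(x)$ up to polynomial factors, and the factorials in the denominators absorb the resulting double sum.

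The one non-automatic step — the only place where the hypothesis $\gamma\in\NF$ genuinely enters — is verifying the uniform bound $|c_{k,l}(j)|\le C^{k+l}$ with $C$ independent of $j$. This reduces to showing that $G_j$ is smooth on $\supp\underline{\rho}$ with $C^k$-norms bounded uniformly in $|j|>j_0$, which is precisely the content of the scaling/doubling properties built into $\NF$ (and the \emph{raison d'\^etre} of the normalization by $\gamma'(2^{-j})$ in the definition \eqref{mjmn}). Granting this, the series $\sum_{k,l\ge 0} C^{k+l}\,\mathrm{poly}(k,l)/(k!\,l!)$ converges to a constant depending only on $\gamma$, completing the pointwise bound and hence the theorem.
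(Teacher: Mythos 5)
Your proposal is correct and follows essentially the same strategy as the paper: Taylor-expand the exponential in the rescaled variables $u=\xi/2^j$ and $v=\gamma'(2^{-j})\eta/2^j$ (with the normalized profile $G_j(t)=2^j\gamma(t/2^j)/\gamma'(2^{-j})$, which is exactly the paper's $\gamma(2^{-j}t)/(2^{-j}\gamma'(2^{-j}))$ in the coefficient $C_{k,l}$), factor the multiplier into a tensor-product double series, bound each physical-space factor by the Hardy--Littlewood maximal function via the radially-decreasing-majorant lemma, and absorb the polynomially growing constants with the factorials after verifying the $j$-uniform bound $|c_{k,l}(j)|\lesssim C_\gamma^{k+l}$ from the $\NF$ properties. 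The only cosmetic difference is that the paper carries out the sum over $(m,n)\in(\Z_-)^2$ explicitly by introducing $\psi=\sum_{m<0}\phi_m\in\s(\R)$, whereas you fold this into an unspecified smooth cutoff $\Phi_L$; this is an inessential bookkeeping point.
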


%% Proof
\begin{proof}
Fix $m,n \in\Z_-$. From the definition of $\phi$ we have that $\big|\frac{\xi}{2^{j(x)}}\big|, \big|\frac{\gamma'(2^{-j(x)})\eta}{2^{j(x)}}\big|\lesssim 1$, and by property (3)  as part of the definition of $\NF$ in \cite{lie2015boundedness}, we have
\[
\sup\limits_{t\in\supp\underline{\rho}}\Big|\eta\gamma\Big(\frac{t}{2^{j(x)}} \Big)\Big|<C_\gamma,
\]
where $C_\gamma$ is a positive constant that is allowed to change from line to line.

Recalling now \eqref{M_multiplierr} - \eqref{mjmn}, we develop the phase in \eqref{phmultiplier} in a Taylor series
\begin{align}
 m_{j(x),m,n}(\xi,\eta)&=\Bigg(\int_\R \sum_{k\in\N} \frac{(-i\frac{\xi}{2^{j(x)}}t)^k}{k!}\sum_{l\in\N}\frac{(i\eta\gamma(\frac{t}{2^{j(x)}}))^l}{l\,!}\underline{\rho}(t)dt\Bigg)\phi\Big(\frac{\xi}{2^{m+j(x)}} \Big)\phi\Big(\frac{\gamma'(2^{-j(x)})\eta}{2^{n+j(x)}}\Big)\nonumber\\
% %
&=\sum_{k,l\ge0}\frac{(-1)^k i^{k+l}}{k!\,l\,!}C_{k,l}
\Big(\frac{\xi}{2^{j(x)}}\Big)^k\phi\Big(\frac{\xi}{2^{m+j(x)}}\Big) \Big(\frac{\gamma'(2^{-j(x)})\eta}{2^{j(x)}}\Big)^l\phi\Big(\frac{\gamma'(2^{-j(x)})\eta}{2^{n+j(x)}}\Big),\label{taylor-series}
\end{align}
where
\[
 C_{k,l}=C_{j(x),k,l}:=\int_\R t^k\Big(\frac{\gamma(2^{-j(x)}t)}{2^{-j(x)}\gamma'(2^{-j(x)})}\Big)^l \underline{\rho}(t)dt.
\]
Since
\begin{equation}\label{constcont}
 |C_{k,l}|\lesssim C_\gamma^{k+l}
\end{equation}
uniformly in $x$, the sum \eqref{taylor-series} converges absolutely.

Set now
\begin{equation}\label{psi}
\psi(x):=\sum_{m<0}\phi(\frac{x}{2^m})\,,
\end{equation}
and notice that $\psi\in\s(\R)$ since $\psi=1-\sum_{m\ge0}\phi_m.$

Inserting \eqref{taylor-series} in \eqref{low_freq} and recalling \eqref{notation21}, we have
\begin{align}
 m^L_{j(x)}(\xi,\eta)&=\sum_{k,l\ge0} \frac{(-1)^k i^{k+l}}{k!\,l\,!}C_{k,l} \Big(\frac{\xi}{2^{j(x)}}\Big)^k\psi\Big(\frac{\xi}{2^{j(x)}}\Big) \Big(\frac{\gamma'(2^{-j(x)})\eta}{2^{j(x)}}\Big)^l
\psi\Big(\frac{\gamma'(2^{-j(x)})\eta}{2^{j(x)}}\Big),\\
&=\sum_{k,l\ge0} \frac{(-1)^k i^{k+l}}{k!\,l\,!}C_{k,l} \,\tilde\psi_k\Big(\frac{\xi}{2^{j(x)}}\Big) \tilde\psi_l\Big(\frac{\gamma'(2^{-j(x)})\eta}{2^{j(x)}}\Big).\label{lowTaylor}
\end{align}

Thus, putting together \eqref{lowTaylor} and \eqref{operator_low_freq} and recalling \eqref{notation22}, we have
\begin{align}
\M_\Gamma^L(f,g)(x)&=\sum_{k,l\ge0} \frac{(-1)^k i^{k+l}}{k!\,l\,!}C_{k,l} \Big(\int_\R \hat{f}(\xi)\tilde\psi_k\Big(\frac{\xi}{2^{j(x)}}\Big)e^{i\xi x}d\xi\Big) \Big(\int_\R \hat{g}(\eta)\tilde\psi_l\Big(\frac{\gamma'(2^{-j(x)})\eta}{2^{j(x)}}\Big) e^{i\eta x}d\eta\Big)\nonumber\\
&\approx\sum_{k,l\ge0} \frac{(-1)^k }{k!\,l\,!}C_{k,l} \Big(f*\Big\{\frac{1}{2^{-j(x)}}\check{\psi}^{(k)}\Big(\frac{\cdot}{2^{-j(x)}}\Big)\Big\}\Big)(x)\label{low_frequency}\\
&\hspace{2cm}\times \Big(g*\Big\{\frac{1}{2^{-j(x)}\gamma'(2^{-j(x)})}\check{\psi}^{(l)} \Big(\frac{\cdot}{2^{-j(x)}\gamma'(2^{-j(x)})}\Big)\Big\}\Big)(x).\nonumber
\end{align}

We can construct the function $\phi$ such that for every $k\in\N$, the function $\check{\psi}^{(k)}$ has an integrable radially decreasing majorant $\Phi_k$ with $\|\Phi_k\|_1\lesssim C^k$ for some constant $C>0$. Thus, using a classical result in \cite{stein1993harmonic}, one has
\begin{align}
\Big|\Big(f*\Big\{\frac{1}{2^{-j(x)}}\check{\psi}^{(k)}\Big(\frac{\cdot}{2^{-j(x)}}\Big)\Big\}\Big)(x)\Big|&\le \sup_{t>0} \Big|\Big(f*\Big\{\frac{1}{t}\check{\psi}^{(k)}\Big(\frac{\cdot}{t}\Big)\Big\}\Big)(x)\Big|\nonumber\\
&\le \|\Phi_k\|_1 Mf(x)\nonumber\\
&\lesssim C^k Mf(x).\label{Mf}
\end{align}

Analogously,
\begin{equation}\label{Mg}
\Big|\Big(g*\Big\{\frac{1}{2^{-j(x)}\gamma'(2^{-j(x)})}\check{\psi}^{(l)} \Big(\frac{\cdot}{2^{-j(x)}\gamma'(2^{-j(x)})}\Big)\Big\}\Big)(x)\Big|\lesssim C^l Mg(x).
\end{equation}
Putting together \eqref{low_frequency}, \eqref{Mf} and \eqref{Mg} we have
\begin{align*}
|\M_\Gamma^L(f,g)(x)|&\lesssim \sum\limits_{k,l\ge0}\frac{1}{k!\,l\,!}|C_{k,l}|C^{k+l}Mf(x)Mg(x)\\
&\lesssim Mf(x)Mg(x)\sum\limits_{k,l\ge0}\frac{1}{k!\,l\,!}C_\gamma^{k+l} C^{k+l}\\
&\lesssim_\gamma Mf(x)Mg(x),\numberthis\label{pointwise_result}
\end{align*}
where here we used \eqref{constcont}.

Therefore, \eqref{pointwise_maximal} holds.

Taking now $p,\, q,\, r$ satisfying \eqref{exponents10} and \eqref{exponents20}, H\"older’s inequality and the standard $L^p$ strong type estimate for the classical Hardy-Littlewood maximal function imply
 \begin{align*}
\|\M_\Gamma^L(f,g)\|_r&\lesssim_\gamma\|Mf Mg\|_r\lesssim_{p,q} \|Mf\|_p\|Mg\|_q\lesssim_{p,q} \|f\|_p \|g\|_q,
 \end{align*}
which concludes our proof.
\end{proof}
%%%%%%% END Proof - HL Maximal and Holder exponents

%%%%%%%%%%%%%%%%%%%%%%%%%%%%%%%%%%%%%%%%%%%%%%%%%%%%%%%%%%%%%%%%%%%%%%%%%%%%%%%%
%%%%%%%%%%%%%%%%%%%%%%%%%%%%%%%%%%%%%%%%%%%%%%%%%%%%%%%%%%%%%%%%%%%%%%%%%%%%%%%%
%%%%%%%%% High frequency terms
\subsection{High frequency term far from diagonal}
% Off-diagonal terms

In this section we discuss the second (off-diagonal) term in our decomposition of $m_{j(x)}$, that is, $m_{j(x)}^{H\not\Delta}$.

Our main focus, will be to prove the following

\begin{theorem}\label{far_theorem}
 Set
 \begin{equation}\label{operator_far}
 \M^{H\not\Delta}(f,g)(x):=\int_\R \int_\R \hat{f}(\xi)\hat{g}(\eta)m_{j(x)}^{H\not\Delta}(\xi,\eta)e^{i\xi x}e^{i\eta
x}d\xi d\eta,
 \end{equation}
 and for $(m,n)\in\Z^2\setminus((\Z_-)^2\cup\Delta)$ let
  \begin{equation}\label{operator_far}
 \M^{H\not\Delta}_{m,n}(f,g)(x):=\int_\R \int_\R \hat{f}(\xi)\hat{g}(\eta)m_{j(x),m,n}(\xi,\eta)e^{i\xi x}e^{i\eta
 	x}d\xi d\eta.
 \end{equation}
 Then, for any $p, q, r$ satisfying \eqref{exponents10} and  \eqref{exponents20}, the following holds
 \begin{itemize}
 	\item If $(m,n)\in\Z^2\setminus((\Z_-)^2\cup\Delta)$ then, recalling \eqref{shif_maxM} and \eqref{shif_maxgamaM}, one has the pointwise estimate
  \begin{equation}\label{mvst}
|\M^{H\not\Delta}_{m,n}(f,g)(x)|\lesssim \frac{1}{2^{\max\{m,n\}}}\,\int\limits_{\{\frac{1}{4} <|t|<1 \}} M^{(2^m t)}f(x)\,M^{(2^n)}_{\g_t} g(x)\,dt\:\:\:\:\:\:\:\:\forall\:x\in\R\,,
\end{equation}
which further implies
 	\begin{equation}
 	\|\M^{H\not\Delta}_{m,n}(f,g)\|_r\lesssim_{\gamma,p,q}\frac{1}{2^{\max\{m,n\}}}(m_++10) (n_++10)\|f\|_p\|g\|_{q}\,.\label{far_bound}
 	\end{equation}

\item Deduce thus that the global component obeys
 	\begin{equation}
 	\|\M_\Gamma^{H\not\Delta}(f,g)\|_r\lesssim_{\gamma,p,q}\|f\|_p\|g\|_q.\label{far_thm_bound}
 	\end{equation}
 \end{itemize}
\end{theorem}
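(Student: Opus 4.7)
The plan is to first establish the pointwise estimate \eqref{mvst} via a non-stationary phase analysis of $m_{j,m,n}$ combined with the recognition of shifted maximal functions on the physical side; then deduce \eqref{far_bound} from H\"older's inequality and the standard $L^p$-bound of the grand shifted maximal operator; and finally obtain \eqref{far_thm_bound} by a regrouping argument on the index sum that avoids the divergences appearing if one naively sums the individual norm estimates.

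\textbf{Pointwise estimate.} Using the Fourier cutoffs in \eqref{mjmn}, I rewrite
\beq
\M^{H\not\Delta}_{m,n}(f,g)(x) = \int_\R f_m^{(j(x))}(x-t)\, g_n^{(j(x),\gamma)}(x+\gamma(t))\, \underline{\r}_{j(x)}(t)\, dt,
\eeq
where $f_m^{(j)}$ and $g_n^{(j,\gamma)}$ denote the Fourier-localized pieces of $f$ and $g$ at scales $2^{m+j}$ and $2^{n+j}/\gamma'(2^{-j})$ respectively. In the off-diagonal region $(m,n)\not\in(\Z_-)^2\cup\Delta$, the hypothesis $\gamma\in\NF$ gives $\gamma'(t/2^j)\approx\gamma'(2^{-j})$ on $\supp\underline{\r}$, so the two terms of $\varphi_{\xi,\eta}'(t) = -\xi/2^j + \eta\gamma'(t/2^j)/2^j$ have sizes $\approx 2^m$ and $\approx 2^n$ and cannot cancel; hence $|\varphi_{\xi,\eta}'(t)|\gtrsim 2^{\max\{m,n\}}$. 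An integration by parts in $t$ extracts precisely this decay factor. Next, since $f_m^{(j)}$ has spatial scale $\approx 2^{-m-j}$ and is evaluated at $x-t$ with $|t|\approx 2^{-j}=2^m\cdot 2^{-m-j}$, one dominates $|f_m^{(j)}(x-t)|$ by an average of $|f|$ over an interval of length $\approx 2^{-m-j}$ translated by $2^m$ scale-units, i.e., by $M^{[2^m]}f(x)$. The analogous argument on the $g$-side, with the nonlinear shift $\gamma(t)\approx\gamma(2^{-j})$ compared to the spatial scale $2^{-n-j}\gamma'(2^{-j})$ of $g_n^{(j,\gamma)}$, yields shift ratio $2^n\,\gamma(2^{-j})/(2^{-j}\gamma'(2^{-j}))$, which via the $C^2$ control of the normalized curve $Q$ is $\lesssim 2^n\|Q\|_{C^2}$, giving $M^{[2^{n+1}\|Q\|_{C^2}]}g(x)$; combining yields \eqref{mvst}.

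\textbf{$L^r$ bound and summation.} H\"older's inequality applied to \eqref{mvst}, together with the standard logarithmic bound $\|M^{[c]}h\|_p\lesssim_p(\log(2+c))\|h\|_p$ for $1<p\le\infty$ (used with $c=2^m$ and $c=2^{n+1}\|Q\|_{C^2}$), immediately gives \eqref{far_bound}. For the global bound, a direct sum over $(m,n)$ diverges whenever $\min\{m,n\}\to-\infty$, since in that regime the corresponding shifted maximal degenerates to the standard Hardy-Littlewood maximal, independent of the negative index. I circumvent this by regrouping before estimating: for each $m\ge 0$, the terms with $n<0$ are merged by summing the cutoffs $\sum_{n<0}\phi(\gamma'(2^{-j})\eta/2^{n+j})$ into a single low-frequency bump on $g$. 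The resulting lumped multiplier still has dominant phase $\approx 2^m$, so the IBP argument applies and yields a single bound $\lesssim 2^{-m}M^{[2^m]}f\cdot Mg$, which is summable over $m\ge 0$. The symmetric case $n\ge 0$, $m<0$ is handled analogously, and the remaining set $\{m,n\ge 0,\,|m-n|>C(\gamma)\}$ is summable directly from \eqref{far_bound}, since the geometric decay $2^{-\max\{m,n\}}$ dominates the polynomial factor $(m+10)(n+10)$.

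\textbf{Main obstacle.} The most delicate step is the pointwise estimate \eqref{mvst}: one must combine the IBP decay with a precise identification of the shift parameter on the nonlinear $g$-side. The quantitative non-flat control $|\gamma''|>0$ from the $\NF$ class is essential here to relate the shift $\gamma(t)$ to the natural spatial scale of $g_n^{(j,\gamma)}$ via the $C^2$ norm of the normalized curve $Q$. A secondary concern is to ensure that the grand shifted maximal operator obeys the $\log$-bound with only one power of logarithm, so that the polynomial factor $(m_++10)(n_++10)$ is beaten by $2^{-\max\{m,n\}}$ in the final summation.
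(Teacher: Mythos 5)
Your proposal is essentially correct and follows the paper's route for the two main ingredients: (i) the pointwise bound \eqref{mvst} is obtained, exactly as in the paper, by one integration by parts in $t$ (the paper encodes it via the operator $L = \frac{-i}{\varphi'_{\xi,\eta}(t)}\partial_t$, splitting $m_{j(x),m,n}$ into $A$ and $B$ pieces) followed by recognizing a shifted Hardy--Littlewood maximal on the physical side for each of $f$ and $g$; and (ii) the $L^r$-bound \eqref{far_bound} follows from H\"older together with Lemma~\ref{shifted_max_funct}. Your description of the pointwise step elides one technical point that the paper spends effort on, namely that after the IBP the denominator $-\frac{\xi}{2^j}+\frac{\eta}{2^j}\gamma'(t/2^j)$ still depends on $t$, and the paper expands it in a geometric series (the index $l$) with coefficients involving $Q'$ and $Q'_{j}$ so as to genuinely separate the $\xi$-, $\eta$- and $t$-dependences; without some such device one cannot cleanly identify the two shifted maximal functions.

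Where your proposal genuinely adds something is the summation step. As you correctly observe, the estimate \eqref{far_bound}
\[
\|\M^{H\not\Delta}_{m,n}(f,g)\|_r\lesssim\frac{1}{2^{\max\{m,n\}}}(m_++10)(n_++10)\|f\|_p\|g\|_q
\]
has no decay in $n$ once $n<0$ (since $n_+=0$, and the grand shifted maximal with small shift parameter reduces to the ordinary one), so summing it directly over $\{m\ge 0,\,n<0\}$ for fixed $m$ would produce an infinite contribution. The paper's final displayed sum is written as though it converged, but literally it does not: for each $m\ge C_\gamma$, the partial sum $\sum_{n<0}$ of the right-hand side of \eqref{far_bound} is infinite. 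Your regrouping --- merging all $n<0$ cutoffs $\phi(\gamma'(2^{-j})\eta/2^{n+j})$ into the single low-pass bump $\psi(\gamma'(2^{-j})\eta/2^{j})$ before estimating, so that the IBP still yields $2^{-m}$ while the $g$-side is now dominated simply by $Mg$ --- is the correct fix, and is clearly what the paper intends. (In fact, in the paper's $l$-expansion the terms with $l\geq 1$ carry the extra factor $(C_\gamma/2^{m-n})^l$ which does decay as $n\to-\infty$, so the regrouping is strictly needed only for the $l=0$ piece; mentioning this would make your argument even tighter.) The symmetric case $m<0$, $n\ge 0$ and the remaining region $\{m,n\ge 0,\ |m-n|>C(\gamma)\}$ are as you say. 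So: same overall approach as the paper, with a more careful and in fact necessary treatment of the final summation.
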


We first state two lemmas which will be used in the proof of the above statement.

The first lemma contains two items: point i) represents Lemma 4.8 proved in \cite{lie2015triangle} and can be found in the math literature under various modified forms - see \textit{e.g.} \cite{MS} and \cite{stein1993harmonic}. The second point ii) can be proved with similar techniques to those used for i) and relies in a key fashion on the properties of the class of curves $\NF$. The second lemma mirrors the first one and addresses the maximal function analogue. The case of $M^{(2^n t)}$ was also referred to and proved within Proposition 42 in \cite{lie2019unifiedap}.
%%%%%% Shifted square functions lemma
\begin{lemma}\label{shifted_square_funct}
Let $\frac{1}{4} <|t|<1$, $\v\in C^\infty_0$ with $\v(0)=0$, $n\in\Z $ and $\g\in\NF$.

\noindent i) Recall the definition of $S_t^{\v}$ given by \eqref{shif_max} in the Notation section.

Then, for any $1<p<\infty$, one has -- uniformly in $t$ -- that
\[
  \|S_{2^n t}^{\v}f\|_p\lesssim_{\v,p} (1+n_{+})^{\frac{2}{p^*}-1}\|f\|_p.
\]

\noindent ii) Similarly, if $S_{n,\g_t}^{\v}$ stands for the shifted square functions defined in \eqref{shif_maxgama} then, for any $1<p<\infty$, one has -- uniformly in $t$ -- that
\begin{equation}\label{sup_displaced}
\|S_{n,\g_t}^{\v}f\|_p\lesssim_{\g,\v,p}(1+n_{+})^{\frac{2}{p^*}-1}\|f\|_p\:.
 \end{equation}
\end{lemma}

\begin{lemma}\label{shifted_max_funct}
With the same assumptions as above, and appealing to \eqref{shif_maxM} and \eqref{shif_maxgamaM} in the Notation section we have
that
\begin{equation}\label{sup_displaced}
\|M^{(2^n t)}f\|_p,\:\|M^{(2^n)}_{\g_t}f\|_p\lesssim_{\g,\v,p}(1+n_{+})^{\frac{1}{p}}\|f\|_p\:,
 \end{equation}
holds uniformly in $t$ and for all $1<p\leq\infty$.
\end{lemma}

With these we are now ready to present

\begin{proof}[Proof of Theorem \ref{far_theorem}]

Fix a pair $(m,n)\in\Z^2\setminus((\Z_-)^2\cup\Delta)$. As in the proof of \cite[Claim 1]{lie2015boundedness}, integrating by parts, we write
\[
 m_{j(x),m,n}=A_{j(x),m,n}(\xi,\eta)+B_{j(x),m,n}(\xi,\eta),
\]
where
\begin{equation}
 A_{j(x),m,n}(\xi,\eta):=\Big(\int_\R e^{-i\frac{\xi}{2^{j(x)}}t} e^{i\eta\gamma(\frac{t}{2^{j(x)}})} \frac{i\
\underline{\rho}'(t)}{-\frac{\xi}{2^{j(x)}}+\frac{\eta}{2^{j(x)}}\gamma'\big(\frac{t}{2^{j(x)}}\big)}dt\Big)
\phi\Big(\frac{\xi}{2^{m+j(x)}} \Big) \phi\Big(\frac{\gamma'(2^{-j(x)})\eta}{2^{n+j(x)}}\Big),
\end{equation}
and
\begin{equation}
 B_{j(x),m,n}(\xi,\eta):= \Big(\int\limits_\R e^{-i\frac{\xi}{2^{j(x)}}t}e^{i\eta\gamma\big(\frac{t}{2^{j(x)}}\big)}
\frac{-i\frac{\eta}{2^{2j(x)}}\gamma''\big(\frac{t}{2^{j(x)}}\big)} {\Big({-\frac{\xi}{2^{j(x)}}+\frac{\eta}{2^{j(x)}}\gamma'\big(\frac{t}{2^{j(x)}}\big)}\Big)^2} \underline{\rho}(t)dt\Big)
\phi\Big(\frac{\xi}{2^{m+j(x)}}\Big)\phi\Big(\frac{\gamma'(2^{-j(x)})\eta}{2^{n+j(x)}}\Big).
\end{equation}

%%%%%%%%%%  Case 1 (proof of Corollary 1)
\item \textsf{Case 1}: $m-n\ge C_\gamma>>1$
$\newline$

By (3) in the definition of $\NF$, we have that
\[
\tfrac{1}{-\tfrac{\xi}{2^{j(x)}}+\tfrac{\eta}{2^{j(x)}}\gamma'\big(\frac{t}{2^{j(x)}}\big)}=-\tfrac{1}{2^m}\frac{1}{\frac{\xi}{2^{m+j(x)}}} \sum_{l\in\N} \tfrac{1}{2^{l(m-n)}}\Bigg(\frac{\frac{\gamma'(2^{-j(x)})\eta}{2^{n+j(x)}}}{\frac
{\xi}{2^{m+j(x)}}}\Bigg)^l (Q'(t)+Q'_{j(x)}(t))^l.
\]
Set
\[
 A_{j(x),m,n}=\sum_{l\in\N} A_{j(x),m,n,l},
\]
with
\begin{align*}
 A_{j(x),m,n,l}(\xi,\eta)&=-\frac{1}{2^m}\frac{1}{2^{l(m-n)}} \left(\int_\R e^{-i\frac{\xi}{2^{j(x)}}t} e^{i\eta\gamma\big(\frac{t}{2^{j(x)}}\big)} i\,\underline{\rho}'(t) (Q'(t)+Q_{j(x)}(t))^l dt\right)\\
&\hspace{2.5cm}\times
\tilde{\phi}_{-l-1}\Big(\frac{\xi}{2^{m+j(x)}}\Big)\tilde{\phi}_{l} \Big(\frac{\gamma'(2^{-j(x)})\eta}{2^{n+j(x)}}\Big)
\end{align*}
where here $\tilde{\phi}_{l}$, $l\in\Z$, is smooth, compactly supported away from the origin, and obeying
\[
\|\tilde{\phi}_{l}\|_{C^\beta}\lesssim\beta!\,(|l|+\b)^{\b}\,
C^{|l|+\b}
\]
for $\b\in\N$ and some fixed $C>0$.

We recall now the fact that, from the properties of $\g\in\NF$ one has that $\|Q_j\|_{C^N}\le a_j$ with $a_j\to 0$ as
$j\to\infty$ and thus, since $|j(x)|\geq j_0$, one can choose $j_0\in\N$ large enough so that $\|Q_{j(x)}\|_{C^N}\leq \|Q\|_{C^N}\:\:\textrm{a.e.}\:x\in\R$.

 Defining the operator $\Lambda^A$ as
\[
\Lambda^A_{m,n,l}(f,g)(x):=\int_\R\int_\R \hat{f}(\xi)\hat{g}(\eta) A_{j(x),m,n,l}(\xi,\eta)e^{i\xi x}e^{i\eta x}d\xi d \eta,
\]
we have
\begin{align}
|\Lambda^A_{m,n,l}(f,g)(x)|&=\Big|\frac{1}{2^m}\frac{1}{2^{l(m-n)}} \int_\R (f*\check{\tilde{\phi}}_{-l-1,m+j(x)})\big(x-\frac{2^mt}{2^{m+j(x)}}\big)\label{sqv}\\
&\hspace{.1cm}\times \big(g*\big\{\tilde{\phi}_l\big(\tfrac{2^{-j(x)}\gamma'(2^{-j(x)})}{2^n}\,\cdot\big)\big\} \check{\phantom{1}}\big)\big(x+\gamma(2^{-j(x)}t)\big) \,\underline{\rho}'(t)\,(Q'(t)+Q'_{j(x)}(t))^l\, dt\Big|\nonumber\\
&\lesssim \frac{1}{2^m}\Big(\frac{2\|Q'\|_\infty}{2^{m-n}}\Big)^l \int\limits_{\{t\in\R:\frac{1}{4} <|t|<1 \}}
\sup_{j\in\Z}\Big|(f*\check{\tilde{\phi}}_{-l-1,j})\big(x-\frac{2^mt}{2^j}\big)\Big| \nonumber\\
&\hspace{.5cm}\times
\sup_{j\in\Z}\Big|\big(g*\big\{\frac{1}{\tfrac{2^{-j}\gamma'(2^{-j})}{2^n}} \check{\tilde{\phi}}_l (\frac{\cdot}{\tfrac{2^{-j}\gamma'(2^{-j})}{2^n}})\big\}\big) (x+\gamma(2^{-j}t))\,\Big|\,dt\nonumber\:.
\end{align}
Once at this point one can proceed in two ways:
\begin{itemize}
\item the simplest route is to apply the estimate
\begin{equation}\label{mv}
|\Lambda^A_{m,n,l}(f,g)(x)|\lesssim \frac{1}{2^m}\Big(\frac{C_{\g}}{2^{m-n}}\Big)^l \int\limits_{\{\frac{1}{4} <|t|<1 \}} M^{(2^m t)}f(x)\,M^{(2^n)}_{\g_t} g(x)\,dt\,.
\end{equation}
\item alternatively, one can appeal to a shifted square function argument, and write
\begin{equation}\label{sv}
|\Lambda^A_{m,n,l}(f,g)(x)|\lesssim \frac{1}{2^m}\Big(\frac{C_{\g}}{2^{m-n}}\Big)^l \left(\int\limits_{\{\frac{1}{4} <|t|<1 \}} S_{2^mt}^{\tilde{\phi}_{-l-1}}f(x)\,S_{n,\g_t}^{\tilde{\phi}_l}g(x)\,dt\right)\,.
\end{equation}
\end{itemize}

Let $\Lambda^A_{m,n}:=\sum_{l\in\N} \Lambda^A_{m,n,l}$. Using now Fubini, H\"older, Lemma \ref{shifted_max_funct} and \eqref{mv} (or alternatively, Lemma \ref{shifted_square_funct} and \eqref{sv}), one deduces
\begin{equation}\label{A1}
\|\Lambda^A_{m,n}(f,g)\|_r\lesssim_{\gamma,p,q}\frac{1}{2^m}(m_++10)(n_{+}+10)\,\|f\|_p\,\|g\|_q\:.
\end{equation}

Similarly, one gets the analogues of \eqref{A1} for the multiplier $B_{j,m,n}$, where in this latter case one uses that
$\frac{1}{2^{2j}}\gamma''(2^{-j}t)=2^{-j}\gamma'(2^{-j})(Q''(t)+Q_j''(t))\,.$
$\newline$

 %%%%%%%%%  Case 2
\item \textsf{Case 2}: $n-m>C_\g>>1$
$\newline$

As we did in Case 1, we write
\[
\frac{1}{-\frac{\xi}{2^{j(x)}}+\frac{\eta}{2^{j(x)}}
	\gamma'(\frac{t}{2^{j(x)}})}
=\frac{1}{2^{n}}\frac{1}{\frac{\gamma'(2^{-j(x)})\eta}{2^{n+j(x)}}}
\sum_{l\in\N}\frac{1}{2^{l(n-m)}} \Bigg(\frac{\frac{\xi}{2^{m+j(x)}}}
{\frac{\gamma'(2^{-j(x)})\eta}{2^{n+j(x)}}}\Bigg)^l\frac{1}{(Q'(t)+Q'_{j(x)}
	(t))^{l+1}}.
\]
Write
\[
\tilde{A}_{j(x),m,n}=\sum_{l\in\N}\tilde{A}_{j(x),m,n,l},
\]
where
\begin{align*}
\tilde{A}_{j(x),m,n,l}(\xi,\eta)&=\frac{1}{2^{n}}\frac{1}{2^{l(n-m)}}
\left(\int_\R
e^{-i\frac{\xi}{2^{j(x)}}t}e^{i\eta\gamma\big(\frac{t}{2^{j(x)}}\big)}i
\frac{\underline{\rho}'(t)}{(Q'(t)+Q'_{j(x)}(t))^{l+1}}dt\right) \\
&\hspace{2cm}\times
\tilde{\phi}_{l}\left(\frac{\xi}{2^{m+j(x)}}\right)\,\tilde{\phi}_{-l-1}\left(\frac{
	\gamma'(2^{-j(x)})\,\eta}{2^{n+j(x)}}\right).
\end{align*}

Letting $\Lambda^{\tilde{A}}_{m,n,l}$ be the operator with symbol
$\tilde{A}_{j(\cdot),m,n,l}$ one proceeds as before in order to get
\begin{equation*}
|\Lambda^{\tilde{A}}_{m,n,l}(f,g)(x)|\lesssim
\frac{1}{2^n}\,\Big(\frac{C_{\g}}{2^{m-n}}\Big)^l\,\left(\int\limits_{\{\frac{1}{4} <|t|<1 \}} M^{(2^m t)}f(x)\,M^{(2^n)}_{\g_t} g(x)\,dt\right) \,,
\end{equation*}
and respectively
\begin{equation}\label{A2}
\|\Lambda^{\tilde{A}}_{m,n}(f,g)\|_r\lesssim_{\gamma,p,q} \frac{1}{2^n}(m_++10)\,(n_{+}+10)\,
\|f\|_p\,\|g\|_{q}.
\end{equation}
Again, a similar reasoning applies to the multiplier $\tilde{B}_{j(\cdot),m,n}$.

Putting together all of the above, one concludes
\begin{align*}
\|M_\Gamma^{H\not\Delta}(f,g)\|_r &\lesssim_{\gamma,p,q}
\sum_{\substack{(m,n)\in \Z^2\setminus((\Z_-)^2\cup\Delta)\\ m-n\geq C_{\g}}}
\frac{1}{2^m}(m_++10)\,(n_{+}+10)\,\|f\|_p\,\|g\|_q\\
&\hspace{1cm} +
\sum_{\substack{(m,n)\in \Z^2\setminus((\Z_-)^2\cup\Delta)\\ n-m\geq C_{\g}}}
\frac{1}{2^n}(m_++10)\,(n_{+}+10)\,\|f\|_p\,\|g\|_q\\
&\lesssim_{\gamma,p,q} \|f\|_p\,\|g\|_q.
\end{align*}
\end{proof}

%%%%%%%%%%%%%%%%%%%%%%%%%%%%%%%%%%%%%%%%%%%%%%%%%%%%%%%%%%%%%%%%%%%%%%%%%%%%%%%
%%%%%%%%%%%%%%%%%%%%%%%%%%%%%%%%%%%%%%%%%%%%%%%%%%%%%%%%%%%%%%%%%%%%%%%%%%%%%%%
%%%% subsection - High frequency terms
\subsection{High frequency term close to diagonal}
%% Lambda operators

In this section we consider the last and most relevant component of $m_{j(x)}$, that is, the term $m_{j(x)}^{H\Delta}$.

Our goal for the remaining part of the paper will be to prove the following

\begin{theorem}\label{high_frequency_diagonal theorem}
Set
 \begin{equation}\label{operator_close}
 \M^{H\Delta}(f,g)(x):=\int_\R \int_\R \hat{f}(\xi)\hat{g}(\eta)m_{j(x)}^{H\Delta}(\xi,\eta)e^{i\xi x}e^{i\eta
x}d\xi d\eta.
 \end{equation}
Then for any $p,\,q,\, r$ satisfying \eqref{exponents10} and \eqref{exponents20} we have
\begin{equation}\label{close_bound}
 \|\M_\Gamma^{H\Delta}(f,g)\|_r\lesssim_{\gamma,p,q}\|f\|_p\|g\|_{q}.
\end{equation}
\end{theorem}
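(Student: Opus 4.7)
The plan is to reduce matters, for each pair $(m,n) \in \Delta$, to proving an estimate of the form
\[
\|\mathcal{M}^{H\Delta}_{m,n}(f,g)\|_r \lesssim_{\gamma,p,q} 2^{-\delta(p,q)m}\,(m+1)^{N(p,q)}\,\|f\|_p\,\|g\|_q,
\]
with $\delta(p,q) > 0$ and some fixed $N(p,q) \in \mathbb{N}$, and then sum over the strip $\Delta$. Since on $\Delta$ one has $|m-n| \leq C(\gamma)$, this is effectively a single geometric sum in $m \geq 0$ and summability is automatic.

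The cornerstone is the $L^2 \times L^2 \to L^1$ bound already advertised in \eqref{l2cs}--\eqref{l2css}. By Remark 2 the symbol $m^{H\Delta}_{j,m,n}$ coincides with its singular-integral analogue, so \cite[Theorem 3]{lie2015boundedness} applies at each scale $j$ and yields the per-scale estimate with decay $2^{-\epsilon m}$. After linearizing the supremum via the measurable selector $j(x)$, Cauchy--Schwarz in $j$ combined with almost orthogonality---the frequency envelopes $\xi \approx 2^{m+j}$ and $\gamma'(2^{-j})\eta \approx 2^{n+j}$ yield a standard Littlewood--Paley decomposition in each input---upgrades the per-$j$ bound to
\[
\|\mathcal{M}^{H\Delta}_{m,n}(f,g)\|_1 \lesssim_\gamma 2^{-\epsilon m}\,\|f\|_2\,\|g\|_2\,,
\]
settling the anchor vertex $(p,q,r)=(2,2,1)$ with geometric decay.

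To cover the full H\"older triangle \eqref{exponents10}--\eqref{exponents20}, I would next install coarse auxiliary bounds at (or near) the other vertices, namely $L^\infty \times L^q \to L^q$ and $L^p \times L^\infty \to L^p$ for $1<p,q<\infty$, with polynomial rather than geometric control in $m$. These follow by adapting the integration-by-parts + shifted-maximal scheme of Theorem \ref{far_theorem} to the diagonal regime: one excises a thin neighborhood of the stationary set (where non-stationary phase produces pointwise domination by $M^{[2^m]}$ with a logarithmic loss à la Lemma \ref{shifted_max_funct}) and absorbs the residual near-stationary piece through a shifted square-function argument (Lemma \ref{shifted_square_funct}). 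Bilinear real interpolation in the form developed in \cite{lie2015triangle}, which carefully tracks the decay parameter rather than collapsing it, then produces the boxed display above for every admissible $(p,q,r)$, and summing in $(m,n) \in \Delta$ gives \eqref{close_bound}.

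The main obstacle will be to propagate the exponential decay in $m$ all the way to the boundary of the H\"older triangle---in particular, the $L^\infty$ vertices and the full segment $r=1$ away from $(2,2,1)$. At those endpoints the stationary-phase decay is not visible through pointwise shifted-maximal dominations (it only manifests at the $L^2$ level), so the interpolation must delicately trade the polynomial losses incurred at the $L^\infty$ corners against the exponential gain at $(2,2,1)$ without destroying summability in $m$. This is exactly where the refined shifted square/maximal function machinery from \cite{lie2015triangle}, together with the wave-packet re-tiling of the time-frequency plane over the stationary region, earns its keep, and I expect the verification of endpoint summability to be the technical heart of the argument.
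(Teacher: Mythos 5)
Your anchor at $(p,q,r)=(2,2,1)$ is exactly right, and the mechanism you describe --- dominating $|T_{j(x),m}|$ by the full $\ell^1$-sum $\sum_j |T_{j,m}|$, then Cauchy--Schwarz in $j$ together with Littlewood--Paley almost orthogonality of the frequency envelopes $\xi\approx 2^{m+j}$, $\gamma'(2^{-j})\eta\approx 2^{n+j}$ --- is precisely the paper's route to \eqref{l2c1} and matches the logic of Theorem \ref{summ}.

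The divergence, and the genuine gap, is in how you propose to reach the remaining exponents. You want to manufacture $L^\infty\times L^q$ and $L^p\times L^\infty$ bounds \emph{for the diagonal piece} $\M^{H\Delta}_{m,n}$ ``by adapting the integration-by-parts + shifted-maximal scheme of Theorem \ref{far_theorem}''. This cannot work as stated: that scheme is a non-stationary-phase argument, and the defining feature of the region $\Delta$ is that the phase $\varphi_{\xi,\eta}$ \emph{has} a stationary point there (the frequency scales of $\xi/2^j$ and $\gamma'(2^{-j})\eta/2^j$ are comparable), so integration by parts in $t$ produces no gain. Excising a ``thin neighborhood of the stationary set'' and absorbing ``the residual'' is exactly the hard part, and you correctly flag that you have no mechanism for it --- but that means the proposal, as written, does not close. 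The paper avoids this difficulty entirely: on the edges $q=\infty$ (segment $(AC]$) and $p=\infty$ (segment $(BC]$) it proves boundedness of the \emph{full} operator $M_\Gamma$ by the trivial pointwise estimates $|M_\Gamma(f,g)(x)|\le \|g\|_\infty\,Mf(x)$, respectively $|M_\Gamma(f,g)(x)|\le \|f\|_\infty\,M_\gamma g(x)\lesssim_\gamma \|f\|_\infty\,Mg(x)$ (the latter using the monotonicity/derivative comparability of $\gamma\in\NF$), Theorems \ref{ginfty} and \ref{finfty}; no decomposition in $m$ is needed there at all. For the interior, the paper interpolates from the edge $(AB)$, where it proves $L^p\times L^{p'}\to L^1$ bounds with linear-in-$m$ growth (Theorem \ref{lpsmgr}), and from the segment $(CD)$ with genuine exponential $m$-decay (Theorem \ref{2r}); both are obtained by transferring the $\Lambda_{j,m}$ machinery of \cite{lie2015triangle} (shifted square functions, Fefferman--Stein, Rubio de Francia), not by any non-stationary phase argument on $\Delta$. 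So the fix to your plan is to (a) abandon the $L^\infty$-vertex estimates for the diagonal piece and instead handle those edges globally on $M_\Gamma$, and (b) replace the ``coarse auxiliary bounds at the $L^\infty$ corners'' by the $L^p\times L^{p'}\to L^1$ bound along $(AB)$ with polynomial $m$-loss, after which interpolation against the $(2,2,1)$ decay estimate and geometric summation in $m$ does close.
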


In this subsection we will describe the strategy of reducing our Theorem \ref{high_frequency_diagonal theorem} to two intermediate results.

We start by noticing that in \eqref{high-close} it is enough to only consider the case $m=n.$ Also, wlog we assume that our $t$-integration encoded in the expression of $m_{j(x)}^{H\Delta}$ is performed over $\R_+$.

\begin{observation}\label{identif} Recalling Remark 2 in Section \ref{MIR}, since the mean zero condition in \eqref{mzh} plays no role in the regions where the phase of the multiplier is highly oscillatory, we are justified from now on to identify our operators $T_{j,m}$ and $B_{j,m}$ introduced below with the corresponding ones defined in \cite{lie2015boundedness}. Consequently, one can transfer with no modifications the theorems regarding $T_{j,m}$ and $B_{j,m}$ in \cite{lie2015boundedness} to our current setting.
\end{observation}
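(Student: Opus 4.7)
The plan is to verify the claim at the level of the multipliers themselves, since once this is done the operators $T_{j,m}$ and $B_{j,m}$ -- being built directly from $m_{j,m}^{H\Delta}$ via linearization and time-frequency discretization -- automatically inherit the identification with their analogues from \cite{lie2015boundedness}. Recall from \eqref{M_multiplierr} and \eqref{mjmn} that, after the reduction $m=n$ and the restriction to $t>0$, the diagonal piece of the maximal multiplier reads
\[
m_{j,m}^{H\Delta}(\xi,\eta) \;=\; \phi\Big(\tfrac{\xi}{2^{m+j}}\Big)\,\phi\Big(\tfrac{\gamma'(2^{-j})\eta}{2^{m+j}}\Big)\int_{\R_+} e^{i\varphi_{\xi,\eta}(t)}\,\underline{\rho}(t)\,dt,
\]
with phase $\varphi_{\xi,\eta}$ as in \eqref{phmultiplier}. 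The matching piece of the bilinear Hilbert multiplier analyzed in \cite{lie2015boundedness} has exactly the same form, with $\underline{\rho}$ replaced by $\rho$. The two amplitudes are both $C_0^\infty$ bumps supported in $\{1/4<|t|<1\}$ with uniformly bounded derivatives of every order; they differ only in the normalizations $\int\rho=0$ and $\int\underline{\rho}=1$ recorded in \eqref{mzh}--\eqref{nzh}.

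Next I would trace through the proofs of \cite{lie2015boundedness} that deal with the high-frequency diagonal pieces and isolate every point at which the mean-zero property of $\rho$ is actually invoked. As already advertised in Remark 2 of Section \ref{MIR}, that property is genuinely used only in the estimates for the low-frequency component $H_j^L$, where it supplies the Coifman--Meyer type cancellation that compensates for the lack of phase oscillation. On the diagonal high-frequency regime $|\xi/2^j|,\,|\gamma'(2^{-j})\eta/2^j|\sim 2^m$ with $m\gg1$, the entire architecture leading to $T_{j,m}$ and $B_{j,m}$ -- stationary phase at the critical point of $\varphi_{\xi,\eta}$, the Gabor wave-packet discretization at the phase-space scale $2^{-m}$, and the (almost-)orthogonality estimates between wave packets at well-separated time-frequency locations -- uses only the smoothness, the support, and the pointwise size of the $t$-amplitude, never its integral. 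Consequently every constant, every decay factor, and every combinatorial argument from \cite{lie2015boundedness} goes through verbatim once $\rho$ is replaced by $\underline{\rho}$.

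This is precisely the content of the observation: the operators $T_{j,m}$ and $B_{j,m}$ manufactured from $M_{j,m}^{H\Delta}$ coincide, up to the inert substitution of the amplitude, with those studied in \cite{lie2015boundedness}, and therefore every theorem proved there applies verbatim in our maximal setting. The only genuinely new feature of the present case -- the $x$-dependence $j=j(x)$ coming from the linearization \eqref{multiplier1} -- does not interfere with this identification; it is absorbed separately, through the $l^\infty$-summation strategy outlined in Remark 1 of Section \ref{MIR} and foreshadowed by \eqref{l2css}. The main (routine) obstacle is therefore a careful bookkeeping of the proofs of \cite{lie2015boundedness}, to ensure that the hypothesis $\int\rho=0$ is nowhere used implicitly -- for example via an integration-by-parts step that silently reshuffles the amplitude onto $\rho$ in a manner that would force its mean to vanish.
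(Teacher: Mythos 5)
Your proposal is correct and takes essentially the same route as the paper: the paper itself does not give a detailed proof of this Observation but justifies it by pointing back to Remark~2 in Section~\ref{MIR}, namely that the mean-zero condition on the kernel is only exploited in the low-frequency regime where there is no oscillation to generate decay, while in the high-frequency diagonal regime the stationary-phase and wave-packet machinery of \cite{lie2015boundedness} uses only the smoothness, support, and pointwise size of the amplitude. Your closing caveat about auditing the proofs of \cite{lie2015boundedness} for any hidden reliance on $\int\rho=0$, and your separate treatment of the new $j=j(x)$ dependence via the $l^\infty$-summation strategy, match the paper's intent.
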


Using now the notation from \cite[Section 3]{lie2015triangle} and after some elaborate technicalities, one can prove that the study of the operator with multiplier $m_{j(x)}^{H\Delta}$ can be reduced to the study of the bilinear operator $T(f,g)$ defined by
\begin{equation}\label{T}
 T(f,g)(x):=\sum_{m\in\N}T_{j(x),m}(f,g)(x)\,.
\end{equation}
Here, for each $j\in\Z$ and $m\in\N$, we define
\begin{equation}\label{Tjm}
T_{j,m}(f,g)(x):=\int_\R\int_\R\hat{f}(\xi)\hat{g}(\eta)v_{j,m}(\xi,\eta)e^{i\xi x}e^{i\eta x} d\xi d\eta
\end{equation}
and
\begin{equation}\label{vjm}
v_{j,m}(\xi,\eta):=2^{-\frac{m}{2}}e^{i\,\varphi_{\xi,\eta}(t_c)} \zeta\Bigg(\frac{\gamma'(2^{-j})\eta}{2^{m+j}},
\frac{\frac{\xi}{2^{m+j}}}{\frac{\gamma'(2^{-j})\eta}{2^{m+j}}}\Bigg) \phi\Big(\frac{\xi}{2^{m+j}}\Big) \phi\Big(\frac{\gamma(2^{-j})\eta}{2^{m+j}}\Big)
\end{equation}
where we have
\begin{itemize}
 \item the phase of the multiplier - recall \eqref{phmultiplier} - is defined as
 \begin{equation}
\varphi_{\xi,\eta}(t):=-\frac{\xi}{2^j}t+\eta\gamma\big(\frac{t}{2^j}\big).
 \end{equation}
 \item For $\xi,\eta$ fixed, $\varphi_{\xi,\eta}$ has a unique critical point $t_c=t_c(j,\xi,\eta) \in [2^{-k(\gamma)},2^{k(\gamma)}]$, where $k(\gamma)\in\N$ depends only on $\gamma$.
 \item $\zeta:[\frac{1}{10},10]\times[2^{-k(\gamma)},2^{k(\gamma)}]\to\R$ satisfies $\|\zeta\|_{N-3}\lesssim  1$ for some $N\ge7$.\footnote{The regularity index $N$ here can be lowered but we will not detail this fact here.}
\end{itemize}

Also, from the properties of the class $\NF$, wlog we can assume that
\begin{equation}\label{lim}
 \lim\limits_{\substack{t\to0\\ t\not=0}}\gamma'(t)=0\quad \text{ and } \quad \lim\limits_{t\to\infty}\gamma'(t)=\infty.
\end{equation}

Now it turns out that in formulas \eqref{Tjm} and \eqref{vjm} one can replace the function $\zeta$ by $1$. In order to clarify this point and make transparent the parallelism with the reasonings in \cite{lie2015boundedness}, we first need to recall some of the notations that we used in \cite{lie2015boundedness}. Thus, letting $\Psi_\eta(\xi)=-\varphi_{\xi,\eta}(t_c)$, we have

\begin{itemize}
 \item For $j>0$ one sets
\begin{equation}\label{BTtr1}
B_{j,m}(f(\cdot),g(\cdot))(x):=[\gamma'(2^{-j})]^\frac{1}{2}T_{j,m} \Big(f(2^{m+j}\cdot),g\Big(\frac{2^{m+j}}{\gamma'(2^{-j})} \cdot\Big)\Big)\Big(\frac{x\gamma'(2^{-j})}{2^{m+j}}\Big),
\end{equation}
that is
\[
B_{j,m}(f,g)(x)=2^{-\frac{m}{2}}[\gamma'(2^{-j})]^\frac{1}{2} \int_\R \int_\R \hat{f}(\xi)\hat{g}(\eta)e^{i(\gamma'(2^{-j})\xi+\eta)x}e^{-i 2^m 2^{j}\Psi_{\frac{\eta}{\gamma'(2^{-j})}}(\xi)}\zeta(\eta,\frac{\xi}{\eta})\phi(\xi)\phi(\eta)d\xi d\eta.
\]
 \item For $j\le 0$ one sets
\begin{equation}\label{BTtr2}
B_{j,m}(f(\cdot),g(\cdot))(x):=[\gamma'(2^{-j})]^{-\frac{1}{2}}T_{j,m} \Big(f(2^{m+j}\cdot),g\Big(\frac{2^{m+j}}{\gamma'(2^{-j})} \cdot\Big)\Big)\Big(\frac{x}{2^{m+j}}\Big),
\end{equation}
that is
\[
B_{j,m}(f,g)(x)=2^{-\frac{m}{2}}[\gamma'(2^{-j})]^{-\frac{1}{2}} \int_\R \int_\R
\hat{f}(\xi)\hat{g}(\eta)e^{i(\xi+\frac{\eta}{\gamma'(2^{-j})})x} e^{-i 2^m 2^{j}\Psi_{\frac{\eta}{\gamma'(2^{-j})}}(\xi)}
\zeta(\eta,\frac{\xi}{\eta})\phi(\xi)\phi(\eta)d\xi d\eta.
\]
\end{itemize}
In what follows we will assume $j>0$, as the other case $j < 0$ can be treated in a similar way.

From the definition of $\gamma\in\NF$ we define for $x\in\R$
\[
 R(u):=\int_1^u r(s)ds \quad\text\quad R_{j}(u)=\int_1^u r_{j}(s)ds,
\]
then
\[
 2^{j}\Psi_{\frac{\eta}{\gamma'(2^{-j})}}(\xi)=\eta R(\frac{\xi}{\eta})+\eta R_{j}(\frac{\xi}{\eta}).
\]
From the properties of $\g\in\NF$ we know that $\lim_{|j|\rightarrow\infty} \|R_j\|_{C^N}=0$. Thus, by properly choosing $j_0\in\N$ in \eqref{threshold} (based on the properties of $\g\in\NF$), one can assume wlog that one has the pointwise estimate $|R_j|\leq \frac{1}{C_{\g}}|R|$ for some large $C_{\g}>>1$. Consequently, $R_j$ behaves as an error term relative to $R$, and thus, for notational simplicity,  we will discard $R_j$ in what follows.

Thus, we have
\begin{equation}\label{bjxm}
B_{j,m}(f,g)(x):=2^{-\frac{m}{2}}[\gamma'(2^{-j})]^\frac{1}{2} \int_\R
\int_\R \hat{f}(\xi)\hat{g}(\eta)e^{i(\gamma'(2^{-j})\xi+\eta)x}e^{-i 2^m \eta
R(\frac{\xi}{\eta})}\zeta(\eta,\frac{\xi}{\eta}
)\phi(\xi)\phi(\eta)d\xi d\eta.
\end{equation}

With these we are now ready to state the following

\begin{observation}
As in \cite[Section 5]{lie2015boundedness}, one can show that the function
$\zeta$ above can be replaced by the constant function $1$. This brings a series of simplifications especially when dealing later with the situation $p\not=2$. Instead of following the argument in  \cite[Section 5]{lie2015boundedness}, we present here a much simpler approach: the secret lies in changing the perspective and focusing on the function
\[
\varrho(\xi,\eta):=\zeta(\eta,\frac{\xi}{\eta})\,.
\]
Indeed, by doing this, one can perform a double Fourier series development on $\varrho$ and notice that the linear complex exponentials will preserve the curvature of the phase given by $2^m\eta R(\frac{\xi}{\eta})$; in contrast with this, in the original argument focusing on $\zeta(\eta,\frac{\xi}{\eta})$, after the double Fourier series argument one had to work extra in order to deal with expressions of the form $\{\frac{\xi}{\eta}n_2\}_{n_2\in\Z}$.

Returning now to the above definition of $ \varrho$ we notice that $\varrho:[\frac{2^{-k(\gamma)}}{10},10\cdot 2^{k(\gamma)}]
\times[\frac{1}{10},10] \to\R$ satisfies $\|\varrho\|_{C^{N-3}}\lesssim_\gamma
1$. This last property follows from the fact that both $\xi$
and $\eta$ are away from $0$.

We can now assume without loss of generality that $\varrho$ is compactly supported on $[2\pi,4\pi]\times[2\pi,4\pi]$. Regarding now $\varrho$ as a $2\pi$-periodic function on $\R^2$, we represent it as a multiple Fourier series:
\[
\varrho(\xi,\eta)=\sum_{n_1,n_2\in\Z}c_{n_1,n_2}e^{i n_1 \xi}e^{i n_2 \eta}.
\]

From the hypothesis that $\|\varrho\|_{C^{N-3}}\lesssim_\gamma 1$ with $N\geq 7$, we have
\begin{equation}\label{z-coefficients}
|c_{n_1,n_2}|\lesssim_\gamma \frac{1}{(1+|n_1|+|n_2|)^4}.
\end{equation}
Thus, for $j>0$, it follows that
\begin{equation}\label{B-Fourier}
B_{j,m}(f,g)(x)=\sum_{n_1,n_2\in\Z}c_{n_1,n_2}B_{j,m}^{n_1,n_2}(f,g)(x)
\end{equation}
with
\begin{equation}\label{BFourier1}
B_{j,m}^{n_1,n_2}(f,g)(x):=2^{-\frac{m}{2}}[\gamma'(2^{-j})]^\frac{1}{2}
\int_\R \int_\R \hat{f}(\xi)\hat{g}(\eta)e^{i(\gamma'(2^{-j})\xi+\eta)x}e^{-i
	2^m \eta R(\frac{\xi}{\eta})}e^{i\xi n_1}e^{i\eta n_2}\phi(\xi)\phi(\eta)d\xi d\eta.
\end{equation}
At this point we make the following simple observation: $\|B_{j,m}^{0,0}\|_{L^2\times L^2 \to L^1}=\|B_{j,m}^{n_1,n_2}\|_{L^2\times L^2 \to L^1}$ for any $n_1,\,n_2\in \Z$ since
the factors $e^{i\xi n_1}$ and $e^{i\eta n_2}$ can be absorbed into the functions $\hat f$ and $\hat g$ in \eqref{BFourier1} without changing their corresponding $L^2$-norms.

Consequently, since \eqref{z-coefficients} implies the absolute convergence of the sum $$\sum_{n_1,n_2}|c_{n_1,n_2}|\,\|B_{j,m}^{n_1,n_2}\|_{L^2\times L^2 \to L^1}=\|B_{j,m}^{0,0}\|_{L^2\times L^2 \to L^1}\,\sum_{n_1,n_2}|c_{n_1,n_2}|\,,$$
one realizes that the boundedness of each of $T_{j,m}$ can be thought as equivalent with the corresponding boundedness of $B_{j,m}^{0,0}$. Therefore, for notational simplicity, we redenote $B_{j,m}$ as $B_{j,m}^{0,0}$ and set $T_{j,m}$ as the correspondent operator associated with the newly defined $B_{j,m}$.
\end{observation}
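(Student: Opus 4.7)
The plan is to isolate the amplitude $\zeta(\eta,\xi/\eta)$ from the oscillatory factor $e^{-i2^m R(\xi/\eta)}$ and reduce the general multiplier to the normalized one (i.e.\ the multiplier with $\zeta\equiv 1$) by a standard Fourier series expansion trick. The key conceptual move, which differs from the route taken in \cite{lie2015boundedness}, is to work with the two-variable function
\[
\varrho(\xi,\eta):=\zeta(\eta,\tfrac{\xi}{\eta})
\]
rather than with $\zeta$ directly. The payoff is that the frequency variables $\xi,\eta$ play symmetric roles, so that Fourier expansion of $\varrho$ produces complex exponentials $e^{i n_1\xi}$ and $e^{in_2\eta}$, each of which couples cleanly to a translation of one of the inputs $f,g$ in physical space and therefore leaves the curved phase $2^m R(\xi/\eta)$ untouched.

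First I would verify the regularity of $\varrho$. Since $\zeta$ is supported in $[\tfrac{1}{10},10]\times[2^{-k(\gamma)},2^{k(\gamma)}]$ with $\|\zeta\|_{C^{N-3}}\lesssim 1$ for some $N\ge 7$, and since the map $(\xi,\eta)\mapsto (\eta,\xi/\eta)$ is smooth with bounded derivatives on the region $\xi,\eta\in[\tfrac{1}{10},10]$ (both bounded away from $0$), the chain rule gives $\|\varrho\|_{C^{N-3}}\lesssim_{\gamma}1$ on a compact set. One may then, after a harmless dilation, regard $\varrho$ as compactly supported in $[2\pi,4\pi]^2$ and $2\pi$-periodically extend it to $\R^2$. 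The resulting multiple Fourier series $\varrho(\xi,\eta)=\sum_{n_1,n_2\in\Z} c_{n_1,n_2} e^{in_1\xi}e^{in_2\eta}$ has coefficients obeying the decay $|c_{n_1,n_2}|\lesssim_\gamma (1+|n_1|+|n_2|)^{-4}$, a standard consequence of integrating by parts four times in each variable.

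Substituting this expansion into \eqref{bjxm} yields $B_{j,m}=\sum_{n_1,n_2} c_{n_1,n_2} B_{j,m}^{n_1,n_2}$ with $B_{j,m}^{n_1,n_2}$ given by \eqref{BFourier1}. The critical observation is that $e^{i\xi n_1}\hat{f}(\xi)$ is the Fourier transform of a translate of $f$, and analogously for $g$. In particular, inserting these linear exponential factors into the multiplier corresponds to precomposing $B_{j,m}^{0,0}$ with unitary translations in $f$ and $g$, which preserve every $L^p$ norm. Hence for any admissible triple $(p,q,r)$ one has $\|B_{j,m}^{n_1,n_2}\|_{L^p\times L^q\to L^r}=\|B_{j,m}^{0,0}\|_{L^p\times L^q\to L^r}$ uniformly in $n_1,n_2$.

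Combining the triangle inequality with the summability $\sum_{n_1,n_2}|c_{n_1,n_2}|<\infty$ from the coefficient decay, one concludes
\[
\|B_{j,m}\|_{L^p\times L^q\to L^r}\lesssim_\gamma \|B_{j,m}^{0,0}\|_{L^p\times L^q\to L^r},
\]
and the reverse inequality is immediate by choosing $\zeta\equiv 1$ (or rather by the same argument applied to the constant amplitude). This legitimizes redefining $B_{j,m}:=B_{j,m}^{0,0}$ and, through the correspondence \eqref{BTtr1}--\eqref{BTtr2}, redefining $T_{j,m}$ accordingly. The only mild subtlety, and in my view the main point to keep an eye on, is the regularity count: one needs $N-3\ge 4$ so that the Fourier coefficient decay is absolutely summable on $\Z^2$; the hypothesis $N\ge 7$ is tailored exactly for this, which is why the paper insists on this threshold.
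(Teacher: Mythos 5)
Your proposal is correct and follows essentially the same route as the paper: define $\varrho(\xi,\eta)=\zeta(\eta,\xi/\eta)$, expand it in a double Fourier series, observe that the resulting linear modulations $e^{in_1\xi}$, $e^{in_2\eta}$ correspond to translations of $f$ and $g$ (hence preserve $L^p$ norms and therefore operator norms uniformly in $n_1,n_2$), and sum using the coefficient decay. Two small remarks: the decay $|c_{n_1,n_2}|\lesssim(1+|n_1|+|n_2|)^{-4}$ requires only total regularity $C^{4}$ rather than four derivatives in each variable separately, and absolute summability over $\Z^2$ in fact only needs $N-3>2$, so the threshold $N\geq 7$ is chosen with some margin; neither of these affects the validity of your argument.
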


Given the observation above, we will only focus our attention on
\begin{equation}\label{nbjxm}
 B_{j,m}(f,g)(x)=2^{-\frac{m}{2}}[\gamma'(2^{-j})]^\frac{1}{2} \int_\R \int_\R \hat{f}(\xi)\hat{g}(\eta)e^{i(\gamma'(2^{-j})\xi+\eta)x} e^{-i 2^m\eta R(\frac{\xi}{\eta})}\phi(\xi)\phi(\eta)d\xi d\eta\,,
\end{equation}
or equivalently, on the corresponding operator $T_{j,m}$ obtained from $B_{j,m}$ via \eqref{BTtr1} (and \eqref{BTtr2} respectively).

Finally, we record the following key relation:
\begin{equation}\label{bt}
\|B_{j,m}\|_{L^2\times L^2 \to L^1}=\|T_{j,m}\|_{L^2\times L^2 \to L^1}\:.
\end{equation}

$\newline$
\noindent\underline{\textsf{Philosophy of our proof}}
$\newline$

\emph{Inspired by \cite{lie2015triangle}, our intention is to show that even in the variable case, the operator
$$T_{j(x),m}\:\:(\textrm{or equivalently}\:\:B_{j(x),m})\,$$
obeys similar decay bounds with $T_{j,m}$ which can be extracted from the corresponding bounds for the $j(x)=j$ constant case. In other words, one can identify a unified approach that deals simultaneously with both the bilinear Hilbert transform and the maximal operator along non-flat curves.}

\begin{figure}[h]
	\includegraphics[scale=.47]{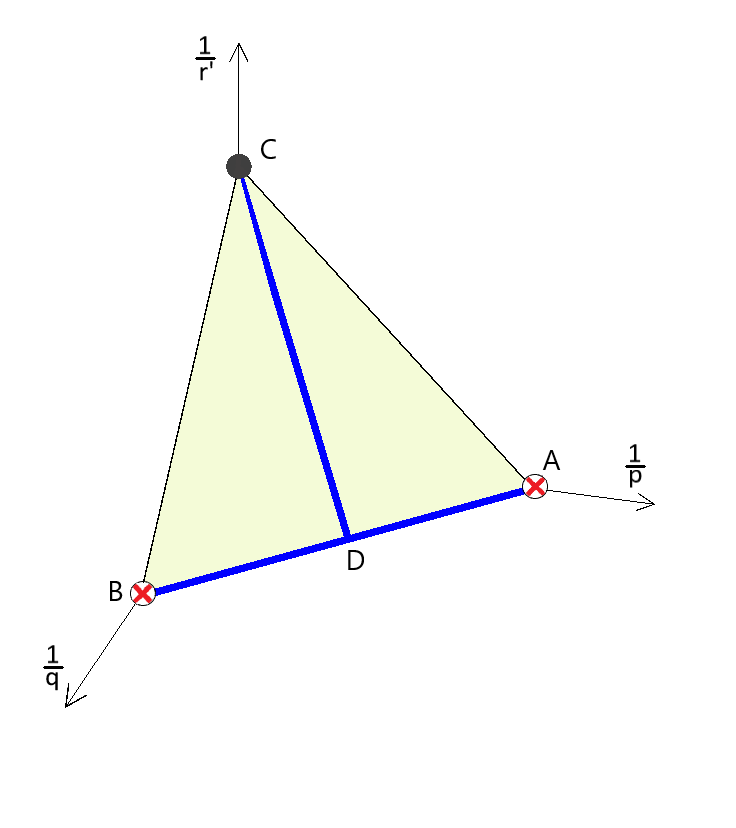}
	\caption{\textbf{Maximal boundedness range for the Bilinear Maximal Function $M_\Gamma$.} Our Main Theorem states that our operator  $M_\Gamma:L^p(\R)\times L^q(\R)\to L^{r}(\R)$ is bounded for all triples $(\frac{1}{p},\frac{1}{q},\frac{1}{r'})$ in the region $\overline{\text{int}(\triangle ABC)}\setminus (\{A\}\cup\{B\})$. This result is sharp.}
	\label{triangle_fig}
\end{figure}

$\newline$
\noindent\underline{\textsf{Main strategy}}
$\newline$

Recall now the definition of our main operator $T(f,g)$ in \eqref{T}. Our proof will be decomposed into two main parts each split in several stages - see Figure \ref{triangle_fig}:
$\medskip$
\begin{itemize}
 \item First part - \underline{boundedness properties of $T$ on $\text{int}(\triangle ABC)\cup (AB)$} - via $m-$decay bounds for $T_{j(x),m}$:
\begin{itemize}
\item in the first stage we provide $m-$decay bounds for $T_{j(x),m}$ on the edge $(AB)$;

\item in the second stage we provide $m-$decay bounds for $T_{j(x),m}$ on the edge $(CD)$;
\end{itemize}

Then our result holds in $\text{int}(\triangle ABC)$ by applying interpolation.
$\medskip$

\item Second part - \underline{boundedness properties of $T$ on $\partial(\triangle ABC)$}:

\begin{itemize}
\item for this situation, in the first stage we provide bounds on the edge $(AC]$;

\item in the second stage we provide bounds on the edge $(BC)$;

\item in the third stage we prove the unboundedness of our operator at the remaining vertices $\{A\}$ and $\{B\}$.
\end{itemize}
\end{itemize}

\section{Boundedness properties of $T$ on $int(\triangle ABC)\cup (AB)$}\label{sec:edges}

In this section, we focus on the boundedness of our operator $T_{j(x),m}(f,g)$ on the edges $(AB)$ and $(CD)$. This together with interpolation imply the boundedness of the our main component operator $T$ for all triples $(\frac{1}{p},\frac{1}{q},\frac{1}{r'})$ within the interior of $\triangle ABC$ together with the edge $(AB)$, \emph{i.e.}, for all $p,\, q,\, r$ satisfying $\frac{1}{p}+\frac{1}{q}=\frac{1}{r}$ with $1<p,\, q<\infty$, and $1\le r<\infty$.

\subsection{Bounds on the edge $(AB)$: $\frac{1}{p}+\frac{1}{q}=1$ with $1<p<\infty$}\label{bound:AB}
$\newline$

Appealing to Observation \ref{identif} and \eqref{bt}, we will transfer in our context the key result in \cite{lie2015boundedness}:

\begin{theorem}{\cite[Theorem 3]{lie2015boundedness}}\label{smallk2}
 There exists $\epsilon\in(0,1)$ such that, for any $j\in \Z$ and $m\in\N$, one has
\begin{equation}\label{l2c}
 \|T_{j,m}(f,g)\|_{L^1}\lesssim_{\gamma} 2^{-\epsilon m}\|f\|_2\,\|g\|_{2}.
\end{equation}
\end{theorem}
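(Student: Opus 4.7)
The plan is to reduce the claim, essentially verbatim, to the corresponding statement for the singular integral analogue already proved in \cite{lie2015boundedness}. The reduction is grounded in two observations that have already been recorded in the preceding paragraphs. First, Observation \ref{identif} asserts that in the high-frequency close-to-diagonal regime, the mean-zero condition \eqref{mzh} and the unit-mass condition \eqref{nzh} are invisible to the analysis: what matters is only the oscillatory behaviour of the phase $\varphi_{\xi,\eta}$ near its stationary point $t_c$. Consequently, the multiplier $v_{j,m}$ in \eqref{vjm} is structurally identical to the one that appears in the bilinear Hilbert transform setting. Second, identity \eqref{bt} tells us that the $L^2\times L^2 \to L^1$ norm of $T_{j,m}$ coincides with that of the rescaled model $B_{j,m}$ defined in \eqref{nbjxm}.

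With these two facts in hand, the plan is to work throughout with $B_{j,m}$ rather than $T_{j,m}$. After discarding the harmless remainder $R_j$ (as justified right after \eqref{bjxm}), the operator $B_{j,m}$ takes the canonical form
\[
B_{j,m}(f,g)(x) = 2^{-m/2}[\gamma'(2^{-j})]^{1/2}\int\!\!\int \hat{f}(\xi)\hat{g}(\eta) e^{i(\gamma'(2^{-j})\xi+\eta)x} e^{-i 2^m R(\xi/\eta)}\phi(\xi)\phi(\eta)\,d\xi\,d\eta,
\]
and the dependence on $j$ is confined to the amplitude factor $[\gamma'(2^{-j})]^{1/2}$ and the linear modulation in $x$. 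Neither of these affects the $L^2\times L^2 \to L^1$ norm, so the bound \eqref{l2c} reduces to a statement about a single oscillatory bilinear Fourier multiplier parametrized only by $m$.

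At this stage the plan is simply to invoke \cite[Theorem 3]{lie2015boundedness}, which produces the desired $\epsilon\in(0,1)$ and the decay $2^{-\epsilon m}$. Combining this with \eqref{bt} concludes the proof. The hard part of the argument — and the reason for the $2^{-\epsilon m}$ decay rather than a mere $2^{-m/2}$ coming from stationary phase — is the wave packet analysis carried out in \cite{lie2015boundedness}, which exploits the non-degeneracy of $R''$ (inherited from $\gamma\in\NF$) together with time-frequency discretization and almost-orthogonality techniques to extract a genuine power decay. If one were proving this from scratch, that combinatorial wave packet analysis would be the main technical obstacle; here, however, the entire apparatus of \cite{lie2015boundedness} transfers without modification thanks to Observation \ref{identif}.
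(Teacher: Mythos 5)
Your proposal is correct and takes essentially the same route as the paper: both treat Theorem \ref{smallk2} not as something to be re-proved but as a direct transfer of \cite[Theorem 3]{lie2015boundedness}, justified by Observation \ref{identif} (the mean-zero vs.\ unit-mass distinction is irrelevant in the high-oscillation regime) together with the norm identity \eqref{bt} relating $T_{j,m}$ to $B_{j,m}$. The paper's own treatment is the single sentence ``Appealing to Observation \ref{identif} and \eqref{bt}, we will transfer in our context the key result in \cite{lie2015boundedness},'' which is precisely the reduction you describe.
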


We claim that one can get an extension of the above result to the \emph{variable} case, that is

\begin{theorem}\label{smallk20}
 There exists $\epsilon\in(0,1)$ such that, for any $m\in\N$, one has
\begin{equation}\label{l2c1}
 \|T_{j(x),m}(f,g)(x)\|_{L^1(dx)}\lesssim_{\gamma} 2^{-\epsilon m}\|f\|_2\,\|g\|_{2}.
\end{equation}
\end{theorem}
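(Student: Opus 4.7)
The plan is to reduce the variable-$j$ statement in Theorem \ref{smallk20} to the constant-$j$ statement in Theorem \ref{smallk2} via a simple linearization followed by a Cauchy--Schwarz plus almost-orthogonality argument, exactly as hinted in equation \eqref{l2css} in the main-ideas section. First I would partition $\R$ into the level sets $E_j:=\{x\in\R: j(x)=j\}$ indexed by $j\in\Z\setminus[-j_0,j_0]$. Since these are pairwise disjoint and $T_{j(x),m}(f,g)(x)=T_{j,m}(f,g)(x)$ for $x\in E_j$, one has
\begin{equation*}
\|T_{j(x),m}(f,g)\|_{L^1(dx)}=\sum_{|j|>j_0}\int_{E_j}|T_{j,m}(f,g)(x)|\,dx\leq \sum_{|j|>j_0}\|T_{j,m}(f,g)\|_{L^1(\R)}.
\end{equation*}
This replaces the maximal nature of the problem by a pure summation in $j$, which is the $\ell^\infty$-to-$\ell^1$ trade flagged in Remark 1.

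Next I would exploit the frequency localization built into the multiplier $v_{j,m}$. From \eqref{vjm}, the factor $\phi(\xi/2^{m+j})\phi(\gamma'(2^{-j})\eta/2^{m+j})$ forces $|\xi|\sim 2^{m+j}$ and $|\gamma'(2^{-j})\eta|\sim 2^{m+j}$. Thus if I denote by $P_j$ and $Q_j$ the smooth Fourier projections onto these $j$-dependent annuli, then $T_{j,m}(f,g)=T_{j,m}(P_jf,Q_jg)$. Applying Theorem \ref{smallk2} to each summand yields
\begin{equation*}
\|T_{j,m}(f,g)\|_{L^1}\lesssim_\gamma 2^{-\epsilon m}\|P_jf\|_2\,\|Q_jg\|_2.
\end{equation*}
Summing in $j$ and applying Cauchy--Schwarz in $j$ gives
\begin{equation*}
\sum_{|j|>j_0}\|P_jf\|_2\|Q_jg\|_2\leq\Big(\sum_{j}\|P_jf\|_2^2\Big)^{1/2}\Big(\sum_{j}\|Q_jg\|_2^2\Big)^{1/2}.
\end{equation*}

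The final step is to control each square-function factor by the corresponding $L^2$ norm via Plancherel. For the first factor this is immediate from the fact that the annuli $\{|\xi|\sim 2^{m+j}\}_{j\in\Z}$ have bounded overlap. For the second factor one uses the non-flatness of $\gamma$: the assumptions \eqref{lim} and the doubling of $\gamma'$ ensure that $j\mapsto 2^{m+j}/\gamma'(2^{-j})$ traverses $(0,\infty)$ with only $O_\gamma(1)$-bounded overlap between consecutive annuli, so $\big(\sum_j\|Q_jg\|_2^2\big)^{1/2}\lesssim_\gamma\|g\|_2$. Combining everything produces the claimed bound $\|T_{j(x),m}(f,g)\|_{L^1}\lesssim_\gamma 2^{-\epsilon m}\|f\|_2\|g\|_2$.

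The main thing to verify carefully is precisely this bounded-overlap property for the $\eta$-annuli, since the sequence $\{2^{m+j}/\gamma'(2^{-j})\}_j$ is not a priori a geometric progression. However, this is exactly the form of orthogonality already exploited in \cite{lie2015boundedness} for the constant-$j$ bilinear Hilbert transform, and it follows from the structural properties encoded in the definition of $\NF$ (in particular the two-sided doubling/non-flatness of $\gamma'$ near $0$ and $\infty$ implied by \eqref{threshold1}). Aside from this almost-orthogonality bookkeeping, every other ingredient is either the partitioning trick or a direct quotation of Theorem \ref{smallk2}, so no new analytic input beyond \cite{lie2015boundedness} is required.
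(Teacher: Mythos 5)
Your proposal is correct and follows essentially the same route as the paper: the paper introduces the auxiliary operator $T_m(f,g)=\sum_j|T_{j,m}(f,g)|$ and uses $|T_{j(x),m}(f,g)(x)|\le T_m(f,g)(x)$, which is precisely the level-set partition estimate you wrote, then rewrites Theorem~\ref{smallk2} in frequency-localized form $\|T_{j,m}(f,g)\|_{L^1}\lesssim_\gamma 2^{-\epsilon m}\|f*\check\phi_{m+j}\|_2\|g*\check\phi_{\gamma,m,j}\|_2$ and concludes by Cauchy--Schwarz in $j$ together with the almost-disjointness of the $\xi$- and $\eta$-annuli coming from the $\NF$ properties of $\gamma$. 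Your concern about the bounded overlap of the $\eta$-annuli is exactly the one the paper addresses by citing the ``smoothness, no critical points, variation near $0$ and $\infty$'' property in the definition of $\NF$, so no gap remains.
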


For the case when input functions are not in $L^2$, we get inspired by the route presented in \cite{lie2015triangle}, and prove (at first) the following tame bounds

\begin{theorem}\label{lpsmgr}
For any $m\in\N$ and \footnote{Throughout this subsection $q=p'$.}  $p,\,p'$ satisfying $\frac{1}{p}+\frac{1}{p'}=1$ with $1<p<\infty$, one has
\begin{equation}\label{l2cp}
 \|T_{j(x),m}(f,g)(x)\|_{L^1(dx)}\lesssim_{\gamma,p} m\,\|f\|_p\,\|g\|_{p'}.
\end{equation}
\end{theorem}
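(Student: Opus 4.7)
The plan is to obtain a pointwise envelope bound of the form
\[
|T_{j(x),m}(f,g)(x)|\lesssim_\gamma M^{[c\,2^m]}f(x)\cdot M^{[c\,2^m]}g(x)
\]
for some $\gamma$-dependent constant $c>0$, from which the theorem follows at once by H\"older's inequality and Lemma \ref{shifted_max_funct}: since $\tfrac{1}{p}+\tfrac{1}{p'}=1$, the two logarithmic losses collapse via
\[
\|T_{j(x),m}(f,g)\|_{L^1}\le\|M^{[c\,2^m]}f\|_{L^p}\|M^{[c\,2^m]}g\|_{L^{p'}}\lesssim_p m^{1/p}\,m^{1/p'}\,\|f\|_p\|g\|_{p'}=m\,\|f\|_p\|g\|_{p'}.
\]

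To prove the pointwise bound, I would first reduce $T_{j(x),m}$ to the normalized operator $B_{j(x),m}$ of \eqref{nbjxm} via the rescalings \eqref{BTtr1}--\eqref{BTtr2}, after which the operator has oscillatory phase $2^m R(\xi/\eta)$ with non-vanishing Hessian of magnitude $\sim 2^m$ on the fixed frequency region $[1/2,2]^2$. I would then perform a wave-packet decomposition of this region into $\sim 2^m$ cells of side $2^{-m/2}$; on each cell a second-order Taylor expansion linearizes the phase up to an $O(1)$ error (since the Hessian times the square of the cell diameter is $2^m\cdot 2^{-m}=1$). The resulting linear-in-$\xi$ and linear-in-$\eta$ phases translate into spatial shifts $\sigma_{l_1,l_2}(j(x)),\,\tau_{l_1,l_2}(j(x))$ of size $\lesssim 2^m$ of the Littlewood--Paley pieces $\tilde f_{l_1},\tilde g_{l_2}$ of $f$ and $g$, so that the contribution of each cell is essentially
\[
2^{-m/2}\,e^{i\theta_{l_1,l_2}}\,\tilde f_{l_1}\bigl(x-\sigma_{l_1,l_2}(j(x))\bigr)\,\tilde g_{l_2}\bigl(x-\tau_{l_1,l_2}(j(x))\bigr).
\]
Summing the $\sim 2^m$ cells via Cauchy--Schwarz (the amplitude $2^{-m/2}$ precisely balances the $2^{m/2}$ loss in each $\ell^2$-factor) and using that each $\tilde f_{l_1},\tilde g_{l_2}$ is a convolution of $f,g$ with a rescaled Schwartz bump admitting an integrable radially-decreasing majorant - whence $|\tilde f_{l_1}(x-\sigma)|\lesssim M^{[|\sigma|]}f(x)\le M^{[c\,2^m]}f(x)$ by the pointwise version of Lemma \ref{shifted_max_funct}, and likewise for $\tilde g_{l_2}$ - delivers the claimed envelope estimate.

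The main technical obstacle will be the careful organization of the wave-packet sum through the measurable linearization $x\mapsto j(x)$: one must verify that the pointwise two-parameter sum over $(l_1,l_2)$, with shifts depending non-trivially on both indices and on $j(x)$, collapses uniformly in $x$ into a genuine product of two \emph{single-variable} shifted Hardy--Littlewood maximal operators of $f$ and of $g$ with shift bound uniformly $\lesssim 2^m$. This is precisely where the shifted maximal/square-function technology developed in \cite{lie2015triangle} and \cite{lie2019unifiedap} plays its decisive role, and where the H\"older conjugacy $\tfrac{1}{p}+\tfrac{1}{p'}=1$ interacts with the twin logarithmic losses $m^{1/p}$ and $m^{1/p'}$ to yield exactly the linear factor $m$ asserted in the theorem.
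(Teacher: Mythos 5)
Your plan rests entirely on the pointwise envelope bound
\[
|T_{j(x),m}(f,g)(x)|\lesssim_\gamma M^{[c\,2^m]}f(x)\cdot M^{[c\,2^m]}g(x),
\]
and this is where the argument breaks down. Your own accounting does not close: a decomposition of $[1/2,2]^2$ into cells of side $2^{-m/2}$ produces $\sim 2^m$ cells; Cauchy--Schwarz over these $\sim 2^m$ cells incurs a factor $\sim 2^{m/2}$ in \emph{each} of the two $\ell^2$-factors, and the single prefactor $2^{-m/2}$ cancels only one of them, leaving an uncontrolled $2^{m/2}$. Restricting to the $\sim 2^{m/2}$ cells hugging the stationary curve (which you do not mention, and which requires a genuine integration-by-parts argument for the transverse cells) would fix the bookkeeping, but then each $\ell^2$-factor becomes a \emph{shifted square function} of $f$ (resp.\ $g$), and shifted square functions are not pointwise dominated by shifted maximal functions; that is exactly the content lost when one replaces $L^p$-averaging by a pointwise claim. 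In fact a direct kernel computation confirms the suspicion: after stationary phase the kernel $K_{j,m}(y_1,y_2)$ of $B_{j,m}$ satisfies $\|K_{j,m}\|_{L^1}\sim 2^{m/2}$, and the genuine pointwise estimate one can squeeze out of Cauchy--Schwarz on the oscillatory kernel is of the form $M_2^{[c2^m]}f\cdot M_2^{[c2^m]}g$ with $M_2 h:=(M(h^2))^{1/2}$, which is only bounded on $L^p$ for $p>2$ and therefore cannot produce the full range $1<p<\infty$.

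The paper's proof proceeds along a completely different route. It first majorizes $|T_{j(x),m}(f,g)(x)|$ by $T_m(f,g)(x):=\sum_{j\in\Z}|T_{j,m}(f,g)(x)|$ (see \eqref{Tm}--\eqref{Tm1}), then dualizes against $h\in L^\infty$ and linearizes the absolute values, reducing matters to bounding $\Lambda^\pm_m(f,g,h)$ in \eqref{Lambdare}. The heart of the argument is an $L^p$-averaged square-function computation taken from \cite{lie2015triangle}: for $1<p\le 2$ one applies a Littlewood--Paley square function to $f$, a Fefferman--Stein vector-valued maximal bound to the paired $(g,h)$ object, discards $h$ pointwise via the $L^\infty$ estimate \eqref{L2infcont}, and finishes with Rubio de Francia's inequality \eqref{RF} applied to $g$ (this is where $p'\ge2$ is used, and where the factor $m$ in the statement is in fact not even needed); for $2<p<\infty$ one instead invokes \cite[Proposition 3]{lie2015triangle} (giving $m^{2/p'-1}$ for the $f$-square function) and \cite[Lemma 4]{lie2015triangle} together with Fefferman--Stein and Littlewood--Paley for the $g$- and $h$-side. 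Nowhere is a pointwise domination of $T_{j(x),m}$ attempted; the cancellation that defeats the $2^{m/2}$ loss in the kernel size is captured purely at the level of $L^p$-norms of square functions. If you want to salvage a pointwise route, you would need to replace the maximal functions by shifted square functions and then track $L^p$-bounds for those (which is essentially Lemma \ref{shifted_square_funct}) -- but at that point you are reproducing the Littlewood--Paley machinery anyway, and the cleaner path is the duality argument the paper actually uses.
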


Notice now that for the case $r=1$, our Theorem \ref{high_frequency_diagonal theorem} follows from Theorems \ref{smallk20} and \ref{lpsmgr} above via interpolation and geometric summation in the $m-$parameter.

$\newline$
\subsubsection{\textbf{The proofs of Theorems \ref{smallk20} and \ref{lpsmgr}}}
$\newline$

Before starting our journey, for $m\in\N$, we introduce the following operator:
\begin{equation}\label{Tm}
 T_m(f,g)(x):=\sum_{j\in\Z}|T_{j,m}(f,g)(x)|\:.
\end{equation}
We observe that one trivially has
\begin{equation}\label{Tm1}
 |T_{j(x),m}(f,g)(x)|\leq T_m(f,g)(x)\:.
\end{equation}

We thus deduce that both Theorems \ref{smallk20} and \ref{lpsmgr} will be now direct consequences of the following

\begin{theorem}\label{summ}

With the notations in \eqref{T}, \eqref{Tjm} and \eqref{Tm}, the following hold:

\begin{enumerate}
\item There exists $\epsilon\in(0,1)$ such that
\begin{equation}\label{l2c2}
 \|T_{m}(f,g)\|_{L^1}\lesssim_{\gamma} 2^{-\epsilon m}\|f\|_2\,\|g\|_{2}.
\end{equation}

\item For $p,\,p'$ satisfying $\frac{1}{p}+\frac{1}{p'}=1$ with $1<p<\infty$, one has
\begin{equation}\label{l2cpp}
 \|T_{m}(f,g)\|_{L^1}\lesssim_{\gamma,p} m\,\|f\|_p\,\|g\|_{p'}.
\end{equation}
\end{enumerate}
\end{theorem}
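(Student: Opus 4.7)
The plan rests on Fourier localization. The multiplier $v_{j,m}$ defined in \eqref{vjm} is supported in the rectangle $\{|\xi|\sim 2^{m+j}\}\times\{|\eta|\sim 2^{m+j}/\gamma'(2^{-j})\}$. Let $P_j$, $Q_j$ be smooth Littlewood--Paley-type Fourier projectors onto these two dyadic bands; then $T_{j,m}(f,g)=T_{j,m}(P_j f,Q_j g)$. By \eqref{lim} together with the non-flatness of $\gamma\in\NF$, both $\{2^{m+j}\}_{j}$ and $\{2^{m+j}/\gamma'(2^{-j})\}_{j}$ are lacunary (with constants depending only on $\gamma$), so $\{P_j\}_j$ and $\{Q_j\}_j$ define bounded Littlewood--Paley decompositions on $L^p(\R)$ for $1<p<\infty$.

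For part (1), a triangle inequality combined with Theorem \ref{smallk2} gives
\[
\|T_m(f,g)\|_{L^1}\le \sum_{j\in\Z}\|T_{j,m}(P_j f,Q_j g)\|_{L^1}\lesssim_\gamma 2^{-\epsilon m}\sum_{j\in\Z}\|P_j f\|_2\|Q_j g\|_2,
\]
and a Cauchy--Schwarz in $j$ followed by Plancherel concludes the desired bound $\lesssim_\gamma 2^{-\epsilon m}\|f\|_2\|g\|_2$.

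For part (2), the per-$j$ triangle inequality is useless since $\sum_j\|P_j f\|_p$ typically diverges for $p\neq 2$; instead, the plan is to first derive a uniform (in $j$) pointwise estimate of the form
\[
|T_{j,m}(P_j f,Q_j g)(x)|\lesssim_\gamma M^{[C\cdot 2^{m}]}(P_j f)(x)\,M^{[C\cdot 2^{m}]}(Q_j g)(x),
\]
with a shift of size $\sim 2^m$ accounting for the location of the unique stationary point $t_c\in[2^{-k(\gamma)},2^{k(\gamma)}]$ of $\varphi_{\xi,\eta}$. After the rescaling $T_{j,m}\to B_{j,m}$ in \eqref{nbjxm}, this estimate is produced by a kernel analysis of the smooth, compactly-supported symbol via (non)stationary phase: the prefactor $2^{-m/2}$ and the oscillation $e^{-i2^m R(\xi/\eta)}$ are controlled by integration by parts against the cutoffs $\phi(\xi)\phi(\eta)$ so that, up to bounded-$L^1$ error, the kernel is dominated by a shifted unit-scale bump. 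A Cauchy--Schwarz in $j$ then yields
\[
T_m(f,g)(x)\le\Big(\sum_j|M^{[C\cdot 2^m]}(P_j f)(x)|^{2}\Big)^{1/2}\Big(\sum_j|M^{[C\cdot 2^m]}(Q_j g)(x)|^{2}\Big)^{1/2},
\]
and Hölder in $L^p\times L^{p'}$ combined with the vector-valued Fefferman--Stein inequality for the shifted maximal operator, Lemma \ref{shifted_max_funct}, and Littlewood--Paley delivers the $L^p\times L^{p'}\to L^1$ bound with total loss at most $(\log(2+2^{m}))^{1/p+1/p'}\approx m$.

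The main obstacle is the pointwise domination step in part (2): the symbol $v_{j,m}$ is not of Coifman--Meyer type because of the high-oscillation factor $e^{-i2^m R(\xi/\eta)}$, so one must first isolate the stationary-phase contribution -- which is precisely what produces the shift of size $2^m$ -- and only then apply the standard kernel-majorant argument to obtain the shifted maximal function structure. The resulting $m$-loss is harmless: it is exponentially absorbed by the gain $2^{-\epsilon m}$ of part (1) in the interpolation that later delivers Theorem \ref{lpsmgr}, and ultimately the $r=1$ case of Theorem \ref{high_frequency_diagonal theorem}.
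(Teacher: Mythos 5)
Your treatment of part (1) is essentially the paper's: the symbol $v_{j,m}$ is localized on $\{|\xi|\sim 2^{m+j}\}\times\{|\gamma'(2^{-j})\eta|\sim 2^{m+j}\}$, so triangle inequality in $j$, Theorem \ref{smallk2}, Cauchy--Schwarz and almost-orthogonality of the annuli give \eqref{l2c2}. No issue there.

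For part (2) there is a genuine gap, and it is exactly at the place you flagged as ``the main obstacle.'' Your proposed pointwise estimate
\[
|T_{j,m}(P_j f,Q_j g)(x)|\lesssim_\gamma M^{[C 2^m]}(P_j f)(x)\,M^{[C 2^m]}(Q_j g)(x)
\]
does not follow from a ``kernel dominated by a shifted unit-scale bump'' description, because after the natural rescaling the kernel $\tilde K(a,b)$ of $T_{j,m}$ is \emph{not} an $L^1$-normalized shifted bump: its $L^1$ norm is of size $\sim 2^{m/2}$, not $O(1)$. The reason is that the phase $\Phi_m(\xi,\eta)=2^m\eta R(\xi/\eta)$ is homogeneous of degree one, so its Hessian has rank one. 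Writing $\tilde K(a,b)\approx 2^{-m/2}\int_s G\big(-2^m R(s)+as+b,\,s\big)\,ds$ with $G$ Schwartz in its first argument (obtained by integrating out the radial variable), one finds that $|\tilde K(a,b)|\sim 2^{-m}$ on a strip of width $O(1)$ around the curve $b=Y(a)$, but — crucially — $|\tilde K(a,b)|$ decays only like $2^{-m}|b-Y(a)|^{-1/2}$ for $1\lesssim|b-Y(a)|\lesssim 2^m$; there are no rapidly decaying tails off the curve, because for such $(a,b)$ the scalar function $s\mapsto -2^m R(s)+as+b$ has simple zeros with derivative $\ll 2^m$. Summing these slow tails gives $\int |\tilde K(a,\cdot)|\,db\sim 2^{-m/2}$ for each $|a|\sim 2^m$, hence $\|\tilde K\|_{L^1}\sim 2^{m/2}$ and, consequently, a naive absolute-value kernel estimate yields at best $|T_{j,m}(f,g)(x)|\lesssim 2^{m/2}\,Mf(x)\,M^{[C2^m]}g(x)$. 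The spurious $2^{m/2}$ exactly cancels the prefactor and kills the argument: with it, the subsequent Cauchy--Schwarz in $j$ and vector-valued shifted maximal estimates produce a bound that grows like $2^{m/2}m$ rather than $m$. To beat $2^{m/2}$ one must exploit the \emph{oscillation} of $\tilde K$ off the stationary curve — this is precisely the genuine cancellation coming from the two crossing points $s_1,s_2$ having different phases — and this cannot be seen by a positive kernel majorization.

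The paper avoids this trap by never passing to an absolute-value kernel bound for the diagonal term. Instead, for \eqref{l2cpp} it dualizes against $h\in L^\infty$, linearizes the modulus via unimodular $\epsilon_{j,m}$, decomposes the frequency annulus $|\eta\gamma'(2^{-j})|\sim 2^{m+j}$ into the $\sim 2^m$ unit blocks indexed by $p_0\in[2^m,2^{m+1})\cap\N$, and then works entirely on the Fourier side: for $1<p\le 2$ it combines Littlewood--Paley for $f$, Fefferman--Stein, the pointwise bound \eqref{L2infcont} on $\sum_{p_0}|(\epsilon_{j,m}h*\check\phi_{j,p_0})|^2$, and Rubio de Francia's inequality for the double-indexed square function of $g$; for $2<p<\infty$ it instead invokes the shifted square-function estimate of \cite[Prop.\,3]{lie2015triangle} and the pointwise domination of \cite[Lemma 4]{lie2015triangle}. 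All the decisive cancellation lives in the $p_0$-sum, which your proposal collapses into a single maximal function and thereby loses. In short: part (1) is fine and matches the paper, but the pointwise domination step in part (2) is false as stated, and the route via dualization and Rubio de Francia (or the shifted square function) is not optional.
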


\subsubsection{\bf{The proof of \eqref{l2c2}}}

Using the following standard notation
$\phi_{m+j}(\xi):=\phi\Big(\frac{\xi}{2^{m+j}}\Big)$ and $\phi_{\gamma,m,j}(\eta):= \phi\Big(\frac{\gamma'(2^{-j})\eta}{2^{m+j}}\Big)$ we rewrite \eqref{l2c} in  Theorem \ref{smallk2} as
\begin{equation}\label{rewl2c}
\|T_{j,m}(f,g)\|_{L^1}\lesssim_{\gamma} 2^{-\epsilon m}\,\|f*\check{\phi}_{m+j}\|_2\,\|g*\check{\phi}_{\gamma,m,j}\|_{2}.
\end{equation}
Using this together with \eqref{Tm} we deduce that
$$\|T_{m}(f,g)\|_{L^1}\leq \sum_{j\in\Z}\|T_{j,m}(f,g)\|_{L^1}\lesssim_{\gamma} 2^{-\epsilon
m}\,\sum_{j\in\Z}\|f*\check{\phi}_{m+j}\|_2\,\|g*\check{\phi}_{\gamma,m,j}\|_{2}$$
which, via an application of Cauchy-Schwarz, further gives
$$\lesssim 2^{-\epsilon
m}\,\left(\sum_{j\in\Z}\|f*\check{\phi}_{m+j}\|_2^2\right)^\frac{1}{2}\,
\left(\sum_{j\in\Z}\|g*\check{\phi}_{\gamma,m,j}\|_{2}^2\right)^\frac{1}{2}
\lesssim 2^{-\epsilon m}\,\|f\|_2\,\|g\|_2\,,$$
where in the last inequality we used that the supports of the functions $\{\phi_{m+j}\}_{\{j\in\Z\}}$ and  respectively $\{\phi_{\gamma,m,j}\}_{\{j\in\Z\}}$ are almost disjoint, with the latter being a direct consequence of the properties of the curve $\gamma$ (see the property ``smoothness, no critical points, variation near $0$ and $\infty$" in the definition of $\NF$ in \cite{lie2015boundedness}).

\subsubsection{\bf{The proof of \eqref{l2cpp}}} For this point, of fundamental importance is the approach in \cite{lie2015triangle}.

Fix $p,\,p'$ as in our hypothesis and take $f\in L^p$ and $g\in L^{p'}$. In the spirit of \cite[Section 3]{lie2015triangle}, for $h\in L^{\infty}$ (we will assume from now on wlog that $\|h\|_{\infty}=1$ and $h$ is positive), we define\footnote{It is important to notice here that as opposed to the similar object defined in \cite{lie2015triangle}, in our context, in definition \eqref{Ljm} below, the operator $T_{j,m}(f,g)$ is taken with \emph{absolute values} thus making $\Lambda_{j,m}(f,g,h)$ a sublinear ``form".} for each $j\in\Z$ and $m\in\N$
\begin{equation}\label{Ljm}
 \Lambda_{j,m}(f,g,h):=\int_\R |T_{j,m}(f,g)(x)|\, h(x)\,dx.
\end{equation}

For $m\in\N$, we define
\begin{equation}\label{Lm}
 \Lambda_m(f,g,h):=\Lambda_m^+(f,g,h)+\Lambda_m^-(f,g,h),
\end{equation}
where
\begin{equation}\label{Lambda_m_pm}
\Lambda_m^+(f,g,h):=\sum_{j\in\N} \Lambda_{j,m}(f,g,h)\quad \text{ and } \quad \Lambda_m^-(f,g,h):=\sum_{j\in\Z\setminus\N} \Lambda_{j,m}(f,g,h).
\end{equation}
Since our $\Lambda's$ involve absolute values inside the integral expressions, in order to be able to use the techniques in \cite{lie2015triangle}, we will first apply a linearization procedure. Thus, for a suitable sequence of $L^{\infty}-$functions $\{\ep_{j,m}\}$ with the property $|\ep_{j,m}(x)|=1$ a.e $x\in\R$, we will re-write
\begin{equation}\label{Lambdare}
\Lambda_m^+(f,g,h):=\sum_{j\in\N} \int_{\R} T_{j,m}(f,g)(x)\, \ep_{j,m}(x)\,h(x)\,dx,\,
\end{equation}
with the obvious correspondence for $\Lambda_m^-(f,g,h)$.

Deduce now that \eqref{l2cpp} is in fact equivalent with
\begin{equation}\label{l2cppg}
 \left|\Lambda_m^{\pm}(f,g,h)\right|\lesssim_{\gamma,p} m\,\|f\|_p\,\|g\|_{p'}\,\|h\|_{\infty}.
\end{equation}
In what follows we will only focus on the $+$ component since the $-$ component can be treated in a similar fashion.

We will split our discussion in two sub-cases $\newline$

\noindent\textbf{Case 1.} $1<p\leq 2$
$\newline$

With the notations\footnote{We will only recall here that $\phi_{j,p_0}(\eta):= \phi(\frac{\g'(2^{-j})\,\eta}{2^j}-p_0)
$ and $\psi_{m,p_0,j}(\xi):= 2^{-\frac{m}{2}}\,e^{- i p_0 R(\frac{\xi}{2^j\,p_0})}\,\phi(\frac{\xi}{2^{m+j}})$ where $j\in\N$ and $p_0\in [2^m,\, 2^{m+1})\cap \N$ with $m\in\N$.} in \cite{lie2015triangle}, following the first part of the argument provided for the proof of Proposition 4.2 in Section 4 of \cite{lie2015triangle}, we have
$$|\ll_{m}^{+}(f,g,h)|\lesssim$$
$$\int_{\R}\left( \sum_{j\in\N}|(f*\check{\phi}_{m+j})(y)|^2\right)^{\frac{1}{2}}\:
\left( \sum_{j\in\N} \left|M\left(\sum_{p_0=2^m}^{2^{m+1}}|(g*\check{\phi}_{j,p_0})\,((\ep_{j,m} h)*\check{\phi}_{j,p_0})|\right)(y)\right|^2\right)^{\frac{1}{2}}\,dy$$
$$\lesssim \left\| \left( \sum_{j\in\N}|f*\check{\phi}_{m+j}|^2\right)^{\frac{1}{2}}\right\|_{p}\,
\left\|\left( \sum_{j\in\N} \left|M\left(\sum_{p_0=2^m}^{2^{m+1}}|(g*\check{\phi}_{j,p_0})\,((\ep_{j,m}\, h)*\check{\phi}_{j,p_0})|\right)\right|^2\right)^{\frac{1}{2}}\right\|_{p'}$$
$$\lesssim \|f\|_{p}\,\left\|\left( \sum_{j\in\N}\left(\sum_{p_0=2^m}^{2^{m+1}}|(g*\check{\phi}_{j,p_0})\,((\ep_{j,m}\, h)*\check{\phi}_{j,p_0})|\right)^2\right)^{\frac{1}{2}}\right\|_{p'}\:,$$
where for the last relation we used standard Littlewood-Paley theory (for providing bounds on the square function for $f$) and Fefferman-Stein's inequality (\cite{fefferman1971some}) (for the term involving the functions $g$ and $h$).

Using now that
\begin{equation}\label{conv}
((\ep_{j,m}\, h)*\check{\phi}_{j,p_0})(x)=\int_{\R} (\ep_{j,m}\, h)(x-\frac{\g'(2^{-j})}{2^j}\,y)\,\check{\phi}(y)\,e^{i\,p_0\,y}\,dy\:,
\end{equation}
we deduce that
\begin{equation}\label{L2infcont}
\sum_{p_0=2^m}^{2^{m+1}}|((\ep_{j,m}\, h)*\check{\phi}_{j,p_0})|^2\lesssim_{\g} \|h\|_{\infty}^2\,.
\end{equation}

Thus, applying Cauchy-Schwarz inequality, we get
\begin{equation}\label{CS}
\left(\sum_{p_0=2^m}^{2^{m+1}}|(g*\check{\phi}_{j,p_0})\,((\ep_{j,m}\, h)*\check{\phi}_{j,p_0})|\right)^2\lesssim_{\g} \|h\|_{\infty}^2\,\sum_{p_0=2^m}^{2^{m+1}}|(g*\check{\phi}_{j,p_0})|^2\,,
\end{equation}
from which we deduce that
\begin{equation}\label{L}
|\ll_{m}^{+}(f,g,h)|\lesssim_{\g}  \|f\|_{p}\, \|h\|_{\infty}\,\left\|\left(\sum_{j\in\N}\sum_{p_0=2^m}^{2^{m+1}}|g*\check{\phi}_{j,p_0}|^2\right)^{\frac{1}{2}}\right\|_{p'}\:.
\end{equation}

Finally, since $p'\geq2$, we are allowed to apply Rubio de Francia's inequality (\cite{rubio1985littlewood}):
\begin{equation}\label{RF}
\left\|\left( \sum_{j\in\N}\sum_{p_0=2^m}^{2^{m+1}}|g*\check{\phi}_{j,p_0}|^2\right)^{\frac{1}{2}}\right\|_{p'}\lesssim_{\g,p'} \|g\|_{p'}\:.
\end{equation}
Putting now together \eqref{CS}, \eqref{L}, \eqref{RF} we conclude that \eqref{l2cppg} holds.
$\newline$

\noindent\textbf{Case 2.} $2<p<\infty$
$\newline$

In this second case, we follow part of the argument inside the proof of Proposition 4.3 in Section 4 of \cite{lie2015triangle}.

Indeed, by applying Cauchy-Schwarz and then H\"older's inequality, we have
\beq\label{key2}
\eeq
$$|\ll_{m}^{+}(f,g,h)|\leq$$
$$\sum_{j\in\N}\int_{\R}
\,\left(\sum_{p_0=2^m}^{2^{m+1}}|(f*\check{\phi}_{m+j}*\check{\psi}_{m,p_0,j})(x)|^2\right)^{\frac{1}{2}}\times
\left(\sum_{p_0=2^m}^{2^{m+1}}|(g*\check{\phi}_{j,p_0})(x)\,((\ep_{j,m}\, h)*\check{\phi}_{j,p_0})(x)|^{2}\right)^{\frac{1}{2}}\,dx$$
$$\leq\left\|\,\left(\sum_{j\in\N}\sum_{p_0=2^m}^{2^{m+1}}|(f*\check{\phi}_{m+j}*\check{\psi}_{m,p_0,j})|^2\right)^{\frac{1}{2}}\right\|_{p}
\left\|\left(\sum_{j\in\N}\sum_{p_0=2^m}^{2^{m+1}}|(g*\check{\phi}_{j,p_0})\,((\ep_{j,m}\, h)*\check{\phi}_{j,p_0})|^{2}\right)^{\frac{1}{2}}\right\|_{p'}\:.$$

Now the content of Proposition 4.4 in \cite{lie2015triangle} is the statement that for $p\geq2$ one has
\begin{equation}\label{Cf}
\left\|\,\left(\sum_{j\in\N}\sum_{p_0=2^m}^{2^{m+1}}|f*\check{\phi}_{m+j}*\check{\psi}_{m,p_0,j}|^2\right)^{\frac{1}{2}}\right\|_{p}\lesssim_{\g,p} m^{\frac{2}{p'}-1}\, \|f\|_{p}\;,
\end{equation}
while from the Lemma 4.9 in \cite{lie2015triangle} we deduce that
\begin{equation}\label{claim}
\sum_{p_0=2^m}^{2^{m+1}}|(g*\check{\phi}_{j,p_0})(x)\,((\ep_{j,m}\, h)*\check{\phi}_{j,p_0})(x)|^{2}\lesssim_{\g} \|h\|_{\infty}^2\, M(g*\check{\phi}_{\g,m,j})^2(x)\:,
\end{equation}
where here we recall that $\phi_{\g,m,j}(\eta):=\phi(\frac{\g'(2^{-j})}{2^{j+m}}\,\eta)\;.$

Inserting \eqref{Cf} and \eqref{claim} in \eqref{key2}, we conclude that

\beq\label{key3}
\eeq
$$|\ll_{m}^{+}(f,g,h)|\leq_{\g,p}  m^{\frac{2}{p'}-1}\, \|f\|_{p}\,\|h\|_{\infty}\,
\left\|\left(\sum_{j\in\N} M(g*\check{\phi}_{\g,m,j})^2\right)^{\frac{1}{2}}\right\|_{p'}$$
$$\lesssim_{p} m^{\frac{2}{p'}-1}\, \|f\|_{p}\,\|h\|_{\infty}\,
\left\|\left(\sum_{j\in\N} (g*\check{\phi}_{\g,m,j})^2\right)^{\frac{1}{2}}\right\|_{p'}\lesssim_{\g,p} m^{\frac{2}{p'}-1}\, \|f\|_{p}\,\|g\|_{p'}\,\|h\|_{\infty}\,,$$
where we made use again of the Fefferman-Stein's inequality (\cite{fefferman1971some}) (for the second inequality) and standard Littlewood-Paley theory (for the third inequality).

%%%%%%%%%%%%%%%%%%%%%%%%%%%%%%%%%%%%%%%%%%%%%%%%%%%%%%%%%%%%%%%%%%%%%%%%%%%%%%%
%%%%%%%%%%%%%%%%%%%%%%%%%%%%%%%%%%%%%%%%%%%%%%%%%%%%%%%%%%%%%%%%%%%%%%%%%%%%%%%
%%%% subsection - p=q=2r

\subsection{Bounds on the segment $(CD)$: $p=q=2r$ with $1<r<\infty$}
$\newline$

The main result of this subsection is
\begin{theorem}\label{2r}
	Let $1\le r<\infty$. For $m\in\N$, the following holds
	\begin{equation}
	\|T_{j(x),m}(f,g)(x)\|_{L^r(dx)}\lesssim_{\gamma,r}(1+m^{1-\frac{1}{r}})\,2^{-\frac{m}{20r}}\,\|f\|_{2r}\,\|g\|_{2r}.
	\end{equation}
\end{theorem}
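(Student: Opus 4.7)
The strategy is bilinear complex interpolation between two endpoint estimates on the diagonal segment $(CD)$.

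\emph{Endpoint at $(L^2,L^2,L^1)$.} This is supplied directly by Theorem \ref{smallk20}: there exists $\epsilon\in(0,1)$ such that $\|T_{j(x),m}(f,g)\|_{L^1}\lesssim_\gamma 2^{-\epsilon m}\|f\|_2\|g\|_2$. Without loss of generality (accepting a slightly worse decay rate) we take $\epsilon=\tfrac{1}{20}$.

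\emph{Endpoint at $(L^\infty,L^\infty,L^\infty)$.} I aim to establish the ``tame'' bound
\[
\|T_{j(x),m}(f,g)\|_{L^\infty}\lesssim_\gamma (1+m)\,\|f\|_\infty\|g\|_\infty.
\]
Since $T_{j(x),m}(f,g)(x)=T_{j,m}(f,g)(x)$ with $j=j(x)$ for each $x$, it suffices to prove the same bound uniformly in $j$ for the constant-$j$ operator $T_{j,m}$. Writing $T_{j,m}$ as a bilinear convolution against its real-space kernel $K_{j,m}=\widehat{v_{j,m}}$, the pointwise inequality $|T_{j,m}(f,g)(x)|\leq \|K_{j,m}\|_{L^1(\R^2)}\|f\|_\infty\|g\|_\infty$ reduces the task to verifying $\|K_{j,m}\|_{L^1(\R^2)}\lesssim_\gamma 1+m$. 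This is obtained by a two-dimensional stationary phase analysis on $v_{j,m}$: after rescaling via the $B_{j,m}$ formalism in \eqref{BTtr1}--\eqref{BTtr2}, the amplitude factor $2^{-m/2}$ in \eqref{vjm} exactly balances the $\sim 2^{-m}$ size of the Hessian of the phase $\varphi_{\xi,\eta}(t_c)$, producing a Gaussian-type kernel whose $L^1$ mass is of unit order modulo a linear-in-$m$ correction coming from the Littlewood--Paley cutoffs and their interaction with the support of $v_{j,m}$.

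\emph{Interpolation.} With both endpoints in hand, bilinear complex interpolation with parameter $\theta=1-\tfrac{1}{r}$ lands at the symmetric triple $(L^{2r},L^{2r},L^r)$ with bound
\[
\bigl(2^{-m/20}\bigr)^{1/r}(1+m)^{1-1/r}\|f\|_{2r}\|g\|_{2r}\leq (1+m^{1-1/r})\,2^{-m/(20r)}\|f\|_{2r}\|g\|_{2r},
\]
where the last inequality uses the elementary fact $(1+m)^a\leq 1+m^a$ valid for $0\leq a\leq 1$. This is precisely the estimate claimed in Theorem \ref{2r}.

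\emph{Main obstacle.} The principal technical challenge is the $L^\infty$ endpoint --- specifically, the uniform-in-$j$ kernel bound $\|K_{j,m}\|_{L^1(\R^2)}\lesssim_\gamma 1+m$. This requires carefully carrying out the two-dimensional stationary phase on the rescaled phase and handling the different scaling regimes for $j>0$ versus $j\leq 0$, using the regularity and non-degeneracy properties of $\gamma$ inherent to the class $\NF$. Should the direct kernel estimate prove too delicate, an alternative route is to dualize $T_{j(x),m}$ against a test function $h\in L^1$ (with $\|h\|_1\leq 1$) and adapt the square-function strategy of Case 2 in Section \ref{bound:AB} to the endpoint $p=\infty$: the square function estimate \eqref{Cf} contributes precisely the growth $m^{2/p'-1}|_{p=\infty}=m$, while the $g$-$h$ side is controlled trivially by $L^\infty$ maximal estimates.
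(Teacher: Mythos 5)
Your proposal takes a genuinely different route from the paper. The paper does \emph{not} interpolate with an $L^\infty\times L^\infty\to L^\infty$ endpoint; instead it real-interpolates the adapted versions of the $(AB)$ and $(AC)$ estimates from \cite{lie2015triangle} (Proposition \ref{DeltaM}) to obtain the fixed-$j$ bound of Proposition \ref{Tjmb} with the localized norms $\|f*\check\phi_{m+j}\|_{2r}$, $\|g*\check\phi_{\gamma,m,j}\|_{2r}$ on the right-hand side, then dominates $|T_{j(x),m}|$ by an $\ell^r$-sum over $j$ and executes the $j$-summation via Cauchy--Schwarz and the embedding $\ell^{2r}(\Z;L^{2r})\hookrightarrow L^{2r}$ coming from Littlewood--Paley theory. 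Your approach replaces this $\ell^r$/Littlewood--Paley machinery for the $j$-sum by the trivial $\ell^\infty$ bound, at the cost of needing the exact $L^\infty$ endpoint, and the $\ell^2$/Littlewood--Paley already built into Theorem \ref{smallk20}. The interpolation arithmetic you carry out is correct, including the elementary inequality $(1+m)^a\leq 1+m^a$, and the resulting decay $2^{-\epsilon m/r}$ matches the paper's $2^{-m/20r}$.

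The gap is the $L^\infty$ endpoint itself, which is asserted but not proved, and whose claimed mechanism is not accurate as stated. The relevant multiplier (in the rescaled $B_{j,m}$ variables) is $2^{-m/2}e^{-i2^m\eta R(\xi/\eta)}\phi(\xi)\phi(\eta)$, whose phase $\eta R(\xi/\eta)$ is homogeneous of degree one in $(\xi,\eta)$; its Hessian is therefore everywhere degenerate, so there is no nondegenerate two-dimensional stationary phase and no ``Gaussian-type'' kernel. What actually happens (after slicing: 1D stationary phase in $\xi$, followed by observing that the residual phase is \emph{linear} in $\eta$) is that $|K_{j,m}(y,z)|\lesssim 2^{-m}\langle S'(y)+z\rangle^{-N}$ on the range $|y|\sim 2^m$ and rapidly decaying off it, which integrates to $\|K_{j,m}\|_{L^1(\R^2)}\lesssim 1$ uniformly in $j,m$ once one also checks that the $\gamma'(2^{-j})^{\pm1/2}$ factors in \eqref{BTtr1}--\eqref{BTtr2} cancel under the scale-invariance of the $L^\infty$ operator norm, that the $\zeta\to 1$ Fourier-series reduction is compatible with $L^\infty$ (translations preserve $L^\infty$ norms, so this is fine), and that the discarded $R_j$ error is harmless. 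None of this is trivial, and it is not the ``linear-in-$m$ correction coming from the Littlewood--Paley cutoffs'' you describe; the kernel estimate is the crux of your proof and must be carried out. Your fallback route is worse: adapting the square-function argument of Section \ref{bound:AB} ``to the endpoint $p=\infty$'' cannot work, because the square function on the left of \eqref{Cf} is not bounded $L^\infty\to L^\infty$ --- the implied constant in \cite[Proposition 3]{lie2015triangle} depends on $p$ and necessarily blows up as $p\to\infty$, so reading $m^{2/p'-1}|_{p=\infty}=m$ is misleading.

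So: a legitimate and arguably cleaner alternative strategy in principle, but the proposal as written is incomplete precisely at the one genuinely new estimate it needs, and the heuristic offered for that estimate is off the mark.
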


In order to prove our Theorem \ref{2r} we will need the following

\begin{proposition}\label{Tjmb}
	Let $p, q, r$ be as in \eqref{exponents10} and \eqref{exponents20}. Then for any $j, m\in\Z$, we have
	\begin{equation}\label{Tjm-pqr}
	\|T_{j,m}(f,g)\|_r\lesssim_{\gamma,p,q}(1+m^{1-\frac{2}{p}})\,2^{-\frac{m}{10}(1-\frac{1}{p*})}\,\|f\|_p\,\|g\|_q.
	\end{equation}
\end{proposition}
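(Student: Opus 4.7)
The plan is to establish Proposition~\ref{Tjmb} via bilinear complex interpolation, combining the sharp $L^2\times L^2\to L^1$ decay bound from Theorem~\ref{smallk2} (which in the constant-$j$ case is precisely \eqref{l2c}) with tame ``corner'' estimates that tolerate polynomial $m$-growth but no exponential decay. Concretely, I would establish the two endpoints
\begin{equation*}
\|T_{j,m}(f,g)\|_1 \,\lesssim_\gamma\, 2^{-\epsilon m}\|f\|_2\|g\|_2 \quad\text{and}\quad \|T_{j,m}(f,g)\|_\infty \,\lesssim_\gamma\, m\,\|f\|_\infty\|g\|_\infty,
\end{equation*}
together with the symmetric corner $L^1\times L^\infty\to L^1$ estimate, and then interpolate to cover the full H\"older range.

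For the tame corner bound, passing via the rescaling \eqref{BTtr1}--\eqref{nbjxm} to $B_{j,m}$, the question reduces to estimating the $L^1(\R^2)$-norm of the bilinear kernel $K_{B_{j,m}}$, i.e.\ the inverse Fourier transform of the compactly supported symbol $2^{-m/2}\,e^{-i2^m R(\xi/\eta)}\,\phi(\xi)\phi(\eta)$. The natural route is the $p_0$-decomposition already used in the proof of Theorem~\ref{lpsmgr}: write $B_{j,m} = \sum_{p_0\sim 2^m} B_{j,m}^{p_0}$, where each piece localizes $\eta$ to a unit-scale interval around $p_0$. On each piece the phase is essentially linear in $\xi$, so a simple integration by parts yields a kernel of $L^1$-norm $\lesssim 2^{-m/2}$. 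Naively summing in $p_0$ gives the prohibitive $2^{m/2}$, but exploiting oscillatory cancellation across the $p_0$-indices---in the spirit of Proposition~3 and Lemma~4 of \cite{lie2015triangle} cited earlier in the excerpt, and relying on the nondegeneracy $R''\neq 0$ built into $\NF$---one brings the total norm down to $\lesssim m$.

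Finally, bilinear Riesz--Thorin interpolation between the two endpoints with parameter $\theta$ produces a bound of the form $\|T_{j,m}(f,g)\|_{r_\theta}\lesssim m^{1-\theta}\,2^{-\epsilon m\theta}\,\|f\|_{p_\theta}\|g\|_{q_\theta}$ with $p_\theta = q_\theta = 2/\theta$; after setting $\theta\approx (1-1/p^*)$ and combining with the symmetric interpolation against the $L^1\times L^\infty\to L^1$ corner, one recovers the claimed bound $(1+m^{1-2/p})\cdot 2^{-m/10\cdot(1-1/p^*)}$ across the full H\"older range $1<p,q\leq\infty$, $1\leq r\leq\infty$. The principal obstacle throughout is the sharp $L^1$-kernel estimate $\|K_{B_{j,m}}\|_{L^1(\R^2)}\lesssim m$: the naive trivial bound of $2^{m/2}$ misses by an exponential-in-$m$ factor, and recovering it requires the careful exploitation of the curvature of $R$ to cancel contributions across adjacent $p_0$-pieces, which is precisely where the $\NF$-hypothesis enters in an essential way.
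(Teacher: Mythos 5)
Your route is genuinely different from the paper's. The paper dualizes $T_{j,m}$ against $h\in L^{r'}$, plugs the dual form $\Lambda_{j,m}$ into the $p_0$-decomposed square-function / Fefferman--Stein / Rubio de Francia machinery already built in \cite{lie2015triangle} (specifically Propositions 1--2 and the analogue of Theorem 2 there), obtains the two edge estimates of Proposition~\ref{DeltaM}, and then interpolates. You instead propose to manufacture tame corner bounds out of an $L^1(\R^2)$-kernel estimate and then run bilinear Riesz--Thorin against Theorem~\ref{smallk2}. That is a legitimate alternative strategy, and (assuming the kernel bound) your interpolation does land in the right ballpark; for the diagonal $p=q\geq 2$ used in Theorem~\ref{2r} the two conclusions agree, and the exponential factor matches once you fix $\epsilon=1/20$.

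The problems are in the execution. First, the crux of your argument -- the claim $\|K_{B_{j,m}}\|_{L^1(\R^2)}\lesssim m$ -- is asserted rather than proved, and the mechanism you describe for it is not the right tool. The $p_0$-decomposition used in Theorem~\ref{lpsmgr} and in \cite{lie2015triangle} produces almost-orthogonal frequency pieces, which is exactly what one exploits via Fefferman--Stein and Rubio de Francia to get $L^p\times L^{p'}\to L^1$ bounds; but almost-orthogonality does nothing directly for an $L^1$-kernel norm, and ``oscillatory cancellation across the $p_0$-indices'' has no clear meaning at the $L^1$ level (the pieces oscillate at distinct frequencies in the output variable, which is an $L^2$-orthogonality phenomenon, not an $L^1$ one). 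The honest way to prove the kernel bound is a direct (non)stationary phase analysis in $\xi$ of the symbol $2^{-m/2}e^{-i2^m\eta R(\xi/\eta)}\phi(\xi)\phi(\eta)$: the $\xi$-stationary point contributes $\sim 2^{-m/2}$ and lives in a $u$-range of size $\sim 2^m$, the $\eta$-integral produces rapid decay in the transverse direction, and one gets $\|K\|_{L^1}\lesssim 1$ (in fact uniformly, not merely $\lesssim m$); you would also need to check the boundary of the $\xi/\eta$-support carefully since that is where $R''$ could degenerate without the $\NF$ hypothesis. Second, you should verify that the $L^\infty$/$L^1$-corner norms of $T_{j,m}$ and $B_{j,m}$ actually agree up to constants uniform in $j$: the identity \eqref{bt} is an $L^2\times L^2\to L^1$ statement, and for other exponents the rescaling \eqref{BTtr1}--\eqref{BTtr2} introduces a factor $[\gamma'(2^{-j})]^{1/2-1/p}$ that you must see cancel against the $[\gamma'(2^{-j})]^{1/2}$ prefactor in \eqref{nbjxm}. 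Finally, your interpolation scheme cannot recover the stated polynomial factor: for $p<2$, the best weight you can place on $(1/2,1/2)$ is $2/p'$, which yields a factor $m^{2/p-1}$ that the proposition does not have (there the factor collapses to $O(1)$ for $p<2$). This overage is harmless for the paper's only application -- Theorem~\ref{2r} needs only $p=q=2r\geq 2$, and the geometric sum in $m$ absorbs any polynomial -- but it means your argument does not actually prove \eqref{Tjm-pqr} as stated.
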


\begin{proof}
Using the notation from (\ref{Ljm}) and \cite{lie2015triangle}, we choose $h\in L^{r'}$ such that
\[
\Lambda_{j,m}(f,g,h)= \int_\R T_{j,m}(f,g)(x)h(x)dx.
\]
Therefore
\[
	|\Lambda_{j,m}(f,g,h)|\le %\sum_{j\in\N}
	\int_\R |(f*\check{\phi}_{m+j})(x)\:|M\Bigg(\sum_{p_0=2^m}^{2^{m+1}}|(g*\check{\phi}_{j, p_0})(\cdot) (h*\check{\phi}_{j, p_0})(\cdot)|\Bigg)(x)\,dx,
\]
and
\[
	|\Lambda_{j,m}(f,g,h)|\le %\sum_{j\in\N}
	\int_\R\Bigg(\sum_{p_0=2^m}^{2^{m+1}}|(g*\check{\phi}_{j, p_0})(x)(h*\check{\phi}_{j, p_0})(x)|^2\Bigg)^\frac{1}{2} \Bigg(\sum_{p_0=2^m}^{2^{m+1}}|(f*\check{\phi}_{m+j}*\check{\psi}_{m, p_0, j})(x)|^2\Bigg)^\frac{1}{2}dx.
\]

Hence, by following the second part of the proof of \cite[Proposition  4.2 ]{lie2015triangle} and \cite[Proposition 4.3 ]{lie2015triangle}, we deduce that the results in \cite{lie2015triangle} hold for $\Lambda_{j,m}$. In particular, the analogue of \cite[Theorem 3.3]{lie2015triangle} follows:

\begin{proposition}\label{DeltaM}
Let $1<p<\infty$. Then the following estimates hold
\begin{equation}
Edge\:(AC):\:\:\:\:\:\:\:\:|\Lambda_{j,m}(f,g,h)|\lesssim_{\gamma,p} (1+m^{1-\frac{2}{p}})2^{-\frac{m}{10}(1-\frac{1}{p*})}\|f\|_p\|g\|_\infty \|h\|_{p'},
\end{equation}
and
\begin{equation}
Edge\:(AB):\:\:\:\:\:\:\:\:|\Lambda_{j,m}(f,g,h)|\lesssim_{\gamma,p} (1+m^{1-\frac{2}{p}})2^{-\frac{m}{10}(1-\frac{1}{p*})} \|f\|_p\|g\|_{p'} \|h\|_\infty.
\end{equation}
\end{proposition}

Thus, Proposition \ref{Tjmb} follows from the Proposition \ref{DeltaM} after applying real interpolation.
\end{proof}
\medskip

We are now ready for the following:
\begin{proof}[Proof of Theorem \ref{2r}]
$\newline$
	
	As we did for the proof of \eqref{l2c2}, we write \eqref{Tjm-pqr} as
\begin{equation}
\|T_{j,m}(f,g)\|_r\lesssim_{\gamma}(1+m^{1-\frac{1}{r}})2^{-\frac{m}{20r}}\|f*\check{\phi}_{m+j}\|_{2r}\|g*\check{\phi}_{\gamma,m,j}\|_{2r},
\end{equation}
where here we used that $p=q=2r$ and that $(2r)^*=\frac{2r}{2r-1}$ since $r\ge 1$.

Since
\begin{equation*}
	|T_{j(x),m}(f,g)(x)|\le \Big(\sum_{j\in\Z}|T_{j,m}(f,g)|^r\Big)^\frac{1}{r},
\end{equation*}
it follows from H\"older inequality and Proposition \ref{Tjmb} that
\begin{align}
	\big\|T_{j(x),m}(f,g)(x)\big\|_{L^r(dx)}&\le \Big(\sum_{j\in\Z}\|T_{j,m}(f,g)\|_r^r\Big)^\frac{1}{r}\nonumber\\
	&\le (1+m^{1-\frac{1}{r}})2^{-\frac{m}{20r}} \Big(\sum_{j\in\Z}\|f*\check{\phi}_{m+j}\|_{2r}^r\|g*\check{\phi}_{\gamma,m,j}\|_{2r}^r\Big)^\frac{1}{r}.\label{supTjm}
\end{align}

Notice that for $r\in [1,\infty)$ and for any $f, \phi\in\s(\R)$, we have
\begin{align*}
\Big(\sum_{j\in\Z}\|f*\phi_j\|_{2r}^{2r}\Big)^\frac{1}{2r}&\le \Bigg(\int_\R \sum_{j\in\Z}|(f*\phi_j)(x)|^{2r}dx\Bigg)^\frac{1}{2r}\\
&\le \Bigg(\int_\R \Bigg(\sum_{j\in\Z}|(f*\phi_j)(x)|^2\Bigg)^rdx\Bigg)^\frac{1}{2r} = \Bigg(\int_\R \Bigg\{\bigg(\sum_{j\in\Z}|(f*\phi_j)(x)|^2\bigg)^\frac{1}{2}\Bigg\}^{2r}dx\Bigg)^\frac{1}{2r}\\
&=\Bigg\|\bigg(\sum_{j\in\Z}|f*\phi_j(x)|^2\bigg)^\frac{1}{2}\Bigg\|_{2r}
\lesssim \|f\|_{2r},
\end{align*}
here for the last relation we used standard Littlewood-Paley theory.

By Cauchy Schwartz, the hypothesis on $\g\in \NF$ and the result above, we obtain
\begin{align}
\Big(\sum_{j\in\Z}\|f*\check{\phi}_{m+j}\|_{2r}^r\|g*\check{\phi}_{\gamma,m,j}\|_{2r}^r\Big)^\frac{1}{r}&\le \Big(\sum_{j\in\Z}\|f*\check{\phi}_{m+j}\|_{2r}^{2r}\Big)^\frac{1}{2r}\Big(\sum_{j\in\Z}\|g*\check{\phi}_{\gamma,m,j}\|_{2r}^{2r}\Big)^\frac{1}{2r} \nonumber\\
&\le_{\g,r} \|f\|_{2r}\|g\|_{2r}.\label{fg2r}
\end{align}

Inserting \eqref{fg2r} in \eqref{supTjm} we get
\begin{equation}
\big\|T_{j(x),m}(f,g)(x)\big\|_{L^r(dx)}\le_{\g,r} (1+m^{1-\frac{1}{r}})2^{-\frac{m}{20r}}\|f\|_{2r}\|g\|_{2r},
\end{equation}
as desired.
\end{proof}

\begin{observation}
In order to prove our main Theorem \ref{high_frequency_diagonal theorem} in the interior of $\triangle ABC$ it is in fact not necessary to pursue the boundedness of our operator along the edge $(CD)$. Indeed, one could apply interpolation between the bounds corresponding to the edges $(AB)$ and $(AC)$ instead, with the latter discussed in the next section. However, we wanted to offer a less expected but more interesting alternative approach to the trivial bounds one gets along the segment $(AC)$. This is especially useful in situations in which one deals with similar type operators but for which one does not have a good control on the edges $(BC)$ and $(AC)$.
\end{observation}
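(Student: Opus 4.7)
The statement to be justified is the claim that boundedness in the interior of $\triangle ABC$ can be recovered by bilinear interpolation purely from the endpoint estimates on the edges $(AB)$ and $(AC)$, without ever invoking the median segment $(CD)$ treated in Theorem~\ref{2r}. My plan is to make this routing explicit and verify that the $m$-decay is preserved under interpolation, so that after summation in $m$ one recovers Theorem~\ref{high_frequency_diagonal theorem} throughout the open triangle.

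The first step is geometric: since the open edges $(AB)$ (where $r=1$, parametrized by $L^p\times L^{p'}\to L^1$ with $1<p<\infty$) and $(AC)$ (where $q=\infty$, parametrized by $L^p\times L^{\infty}\to L^p$ with $1<p<\infty$) share the vertex $A$, the convex hull of $(AB)\cup(AC)$ in the simplex $\{(\tfrac1p,\tfrac1q,\tfrac1{r'}):\tfrac1p+\tfrac1q+\tfrac1{r'}=1\}$ is the entire closed triangle $\overline{\triangle ABC}$. Hence every interior triple $(\tfrac1p,\tfrac1q,\tfrac1{r'})$ arises as $(1-\theta)(\tfrac1{p_0},\tfrac1{p_0'},0)+\theta(\tfrac1{p_1},0,1-\tfrac1{p_1})$ for some $\theta\in(0,1)$ and some $p_0,p_1\in(1,\infty)$, which is the only combinatorial input needed.

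The second step is to collect, for each dyadic block $T_{j(x),m}$, one decaying estimate at each endpoint. On $(AB)$, interpolation of Theorem~\ref{smallk20} (giving $2^{-\epsilon m}$ at $p=2$) against the tame bound of Theorem~\ref{lpsmgr} (giving a factor $m$ for all $1<p<\infty$) yields
\[
 \|T_{j(x),m}(f,g)\|_{L^1}\lesssim_{\gamma,p}\,2^{-\epsilon_p m}\,\|f\|_p\|g\|_{p'},\qquad 1<p<\infty,
\]
with some $\epsilon_p>0$. On $(AC)$, the counterpart to be established in the next section will supply an analogous bound $\|T_{j(x),m}(f,g)\|_{L^p}\lesssim_{\gamma,p} 2^{-\epsilon_p' m}\|f\|_p\|g\|_{\infty}$ with $\epsilon_p'>0$. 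Applying (sublinear) bilinear real interpolation to these two families at the convex-combination parameter $\theta$ produces
\[
 \|T_{j(x),m}(f,g)\|_{L^{r_\theta}}\lesssim_{\gamma,p_0,p_1,\theta}\,2^{-((1-\theta)\epsilon_{p_0}+\theta\epsilon_{p_1})m}\,\|f\|_{p_\theta}\|g\|_{q_\theta},
\]
with exponential decay preserved. A geometric summation in $m\in\N$ then delivers the full bound for $T=\sum_m T_{j(x),m}$ at the arbitrary interior triple, which is exactly Theorem~\ref{high_frequency_diagonal theorem} in the open triangle.

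The only delicate point is the legitimacy of bilinear interpolation for our object: after linearizing the supremum by a sign sequence $\{\varepsilon_{j,m}\}$ (as already done in \eqref{Lambdare}), the form $\Lambda_{j,m}(f,g,h)$ is linear in each slot, so the standard Calder\'on--Lions or bilinear Marcinkiewicz machinery applies and commutes with the dependence on the measurable selector $j(x)$ since the latter enters only through the pointwise envelope used to define $T_m$ in \eqref{Tm}. The philosophical punchline of the observation is exactly this: any time the $(BC)$ or median $(CD)$ bounds are unavailable or hard to establish (as will likely be the case in variants such as the setup of \cite{lvUA3}), the pair $(AB)$+$(AC)$ suffices to fill the interior of $\triangle ABC$, which is why the authors highlight this alternative route.
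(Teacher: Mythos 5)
Your geometric claim—that the convex hull of the open edges $(AB)$ and $(AC)$ fills the interior of $\triangle ABC$—is correct, and this is indeed the combinatorial content behind the observation. However, your chosen route of interpolating the individual pieces $T_{j(x),m}$ rests on an unsupported (and misattributed) assertion about edge $(AC)$. You claim that ``the counterpart to be established in the next section will supply an analogous bound $\|T_{j(x),m}(f,g)\|_{L^p}\lesssim_{\gamma,p} 2^{-\epsilon_p' m}\|f\|_p\|g\|_{\infty}$,'' but Section 5.1 proves no such thing: Theorem \ref{ginfty} there is the elementary pointwise estimate $|M_\Gamma(f,g)(x)|\le\|g\|_\infty Mf(x)$ for the \emph{full}, undecomposed operator $M_\Gamma$. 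It carries no $m$-decay and is not formulated at the level of $T_{j(x),m}$ at all — that is precisely why the paper calls the $(AC)$ bound ``trivial.'' Moreover, obtaining a genuine $m$-decaying bound for $T_{j(x),m}$ (as opposed to the fixed-$j$ form $\Lambda_{j,m}$) on $(AC)$ is not automatic: the $j$-summation used elsewhere in the paper exploits Littlewood–Paley orthogonality in \emph{both} inputs, while on $(AC)$ the input $g\in L^\infty$ has no usable frequency localization, so the argument behind Theorem \ref{summ} / Theorem \ref{2r} does not transfer. Finally, your remark that $j(x)$ ``enters only through the pointwise envelope used to define $T_m$ in \eqref{Tm}'' conflates two different objects; \eqref{Tm} defines $T_m=\sum_j|T_{j,m}|$, which contains no linearizing selector.

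The intended reading of the observation is simpler and avoids these issues entirely: one interpolates at the level of the \emph{summed} operator, not its $m$-pieces. On $(AB)$ one has $\|T(f,g)\|_{L^1}\lesssim\|f\|_p\|g\|_{p'}$ from Subsection \ref{bound:AB} (Theorems \ref{smallk20} and \ref{lpsmgr}, interpolated and summed in $m$, with uniformity in the linearizing $j(\cdot)$ guaranteed by the use of $T_m$). On $(AC)$ one obtains $\|\M_\Gamma^{H\Delta}(f,g)\|_{L^p}\lesssim\|f\|_p\|g\|_\infty$ by combining Theorem \ref{ginfty} for $M_\Gamma$ with the $(AC)$ bounds already established for the secondary components $\M_\Gamma^L$ (pointwise by $Mf\cdot Mg$, hence trivially by $\|g\|_\infty Mf$) and $\M_\Gamma^{H\not\Delta}$ (from \eqref{mvst}, since $M^{[2^{n+1}\|Q\|_{C^2}]}(g)\lesssim\|g\|_\infty$). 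Bilinear real interpolation, applied to the fixed-$j(\cdot)$ family uniformly, then fills the interior. No $m$-decay on $(AC)$ is required, which is the whole point of calling that route ``trivial'' and why the authors preferred to exhibit the less obvious $(CD)$ argument.
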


\section{Boundedness properties of $T$ on $\partial(\triangle ABC)$}
In this section we will prove positive boundedness results along the edges $(AC]$, $(BC]$ and negative results for the vertices $\{A\}$ and $\{B\}$ (see Figure \ref{triangle_fig}).\footnote{We remind the reader that the boundedness along the edge $(AB)$ was proved in Subsection \ref{bound:AB}.}

\subsection{Bounds on the edge $(AC]$: $1< p\le \infty$, $r=p$ and $q=\infty$}
$\newline$

From the definition \eqref{definition} of $M_\Gamma$ it is trivial to notice that for all $x\in\R$,
\begin{align}
|M_\Gamma(f,g)(x)|&\le \|g\|_\infty\, \sup_{\e>0}\frac{1}{2\e}\int_{-\e}^\e |f(x-t)|dt\nonumber\\
&\le \|g\|_\infty\, Mf(x).\label{1inf1}
\end{align}
Hence, applying the classical results on the Hardy-Littlewood maximal operator we get
\begin{theorem}\label{ginfty}
	For any $1<p\le \infty$, one has
	\begin{equation}
	\|M_\Gamma(f,g)\|_{L^p}\lesssim_p \|f\|_p\,\|g\|_\infty.
	\end{equation}
\end{theorem}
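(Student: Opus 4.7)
The proof is essentially already outlined in the inequality \eqref{1inf1} that precedes the statement, so my plan is to make that outline into a complete (but very short) proof. First I would pull out $\|g\|_\infty$ from inside the defining integral of $M_\Gamma$, observing that $g(x+\gamma(t))\le \|g\|_\infty$ for a.e.\ $t$ (so we do not even need any information about the curve $\Gamma$). This reduces the inner average to the symmetric average of $|f|$ over $[-\varepsilon,\varepsilon]$, which is bounded pointwise by the Hardy–Littlewood maximal function $Mf(x)$, yielding the pointwise estimate
\begin{equation*}
|M_\Gamma(f,g)(x)|\;\le\;\|g\|_\infty\,Mf(x)\qquad\text{for all }x\in\R.
\end{equation*}

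Once the pointwise bound is in hand, I would take $L^p$ norms of both sides and invoke the classical strong $(p,p)$ bound for the Hardy–Littlewood maximal operator when $1<p<\infty$ (and the trivial $L^\infty$ bound when $p=\infty$) to conclude
\begin{equation*}
\|M_\Gamma(f,g)\|_{L^p}\;\le\;\|g\|_\infty\,\|Mf\|_{L^p}\;\lesssim_p\;\|f\|_p\,\|g\|_\infty.
\end{equation*}

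There is really no obstacle here: the statement is genuinely a one-line corollary of the pointwise domination by the Hardy–Littlewood maximal function, which is why this case sits on the edge $(AC]$ of the boundedness triangle and requires none of the oscillatory-integral machinery developed in Sections~3–4 of the paper. The only mild subtlety worth flagging in the write-up is that the argument is completely insensitive to the curve $\gamma$ (in particular, it does not use $\gamma\in\NF$), so one may as well state it for arbitrary measurable $\gamma$; this is why the companion bound on $(BC]$, treated in the next subsection, will require genuine work to handle the symmetric role of $g$.
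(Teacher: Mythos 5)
Your proof is correct and is identical to the paper's own argument: the paper also pulls $\|g\|_\infty$ out of the defining integral to obtain the pointwise bound $|M_\Gamma(f,g)(x)|\le\|g\|_\infty Mf(x)$ (displayed as \eqref{1inf1}) and then invokes the standard strong $(p,p)$ bound for the Hardy--Littlewood maximal operator. Your observation that the estimate is insensitive to $\gamma$ is accurate and a nice touch, though the paper does not remark on it.
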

$\smallskip$

\subsection{Bounds on the edge $(BC]$: $1< q\le \infty$, $r=q$ and $p=\infty$}
$\newline$

Our goal in this section is to prove the following:
\begin{theorem}\label{finfty}
	For any $1<q\le \infty$, one has
	\begin{equation}
	\|M_\Gamma(f,g)\|_{L^q}\lesssim_{\g,q} \|f\|_\infty\,\|g\|_q.
	\end{equation}
\end{theorem}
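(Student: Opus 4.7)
The plan is to mirror the proof of Theorem \ref{ginfty}: pull $\|f\|_\infty$ outside the definition \eqref{definition} and reduce matters to a pointwise majorization of the resulting maximal average of $|g|$ by the Hardy--Littlewood maximal function $Mg$. Explicitly, $M_\Gamma(f,g)(x)\le \|f\|_\infty\, N_\gamma g(x)$ where
\[
N_\gamma g(x):=\sup_{\e>0}\frac{1}{2\e}\int_{-\e}^{\e} |g(x+\gamma(t))|\,dt.
\]
Once I establish the pointwise bound $N_\gamma g(x)\lesssim_\gamma Mg(x)$, the standard $L^q$-boundedness of $M$ for $1<q\le\infty$ finishes the job (the endpoint $q=\infty$ being trivial).

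To prove the pointwise bound, I would first repeat verbatim the reduction leading to \eqref{discretizationc}: pass to a dyadic supremum at scales $\e\sim 2^{j+1}$ and discard the finitely many scales $|j|\le j_0$, which by the Minkowski argument between \eqref{discretizationsI} and \eqref{discretizationsII} contribute an $O_{\gamma}(Mg(x))$ error. On each remaining annulus $\{2^{j}\le|t|\le 2^{j+1}\}$ with $|j|>j_0$, \eqref{threshold1} guarantees that $\gamma$ is strictly monotone on both components, so the substitution $u=\gamma(t)$ is legitimate. Working on the positive branch (the negative one is analogous), it yields
\[
\frac{1}{2^{j+1}}\int_{2^{j}}^{2^{j+1}} |g(x+\gamma(t))|\,dt\;\approx_\gamma\; \frac{1}{|J_j|}\int_{x+J_j}|g(u)|\,du,\qquad J_j:=[\gamma(2^j),\gamma(2^{j+1})],
\]
using that $|\gamma'|$ is comparable to $|\gamma'(2^{j})|$ throughout the annulus (a doubling feature of $\NF$-curves) and hence $|J_j|\approx 2^{j}|\gamma'(2^{j})|$.

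The decisive structural input is that the shift-to-length ratio $\mathrm{dist}(0,J_j)/|J_j|$ is bounded by a constant $C_\gamma$ uniformly in $|j|>j_0$; this is the non-flat analogue of the trivial identity $\mathrm{dist}(0,[2^{j},2^{j+1}])/|[2^{j},2^{j+1}]|=1$ and, in the polynomial model $\gamma(t)=t^{d}$, reduces to the computation $(2^{d}+1)/(2(2^{d}-1))$. Granted this, each shifted average $\tfrac{1}{|J_j|}\int_{x+J_j}|g|$ is majorized by $C_\gamma'\,Mg(x)$ via the standard trick of enlarging $x+J_j$ to a symmetric interval about $x$ of comparable length. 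Taking the supremum in $j$ and combining the two branches produces $N_\gamma g(x)\lesssim_\gamma Mg(x)$. The only non-routine point is the bounded shift-to-length ratio for arbitrary $\gamma\in\NF$; this is the main obstacle, but it is precisely the doubling behavior stipulated in the definition of $\NF$ in \cite[Section~2]{lie2015boundedness}, so I would quote it rather than reprove it.
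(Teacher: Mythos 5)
Your proposal mirrors the paper's proof almost exactly: after pulling out $\|f\|_\infty$, you change variables $u=\gamma(t)$ on each dyadic annulus $\{2^j\le|t|\le 2^{j+1}\}$ with $|j|>j_0$, invoke the doubling of $\gamma$ and $\gamma'$ near $0$ and $\infty$ (the paper states this as \eqref{gammak1t}: $|\gamma(2^k)|\approx_\gamma 2^k|\gamma'(t)|\approx_\gamma|\gamma(2^{k+1})|$, which is exactly your bounded shift-to-length ratio) to recognize the resulting shifted average as $\lesssim_\gamma Mg(x)$, and finish with the $L^q$ theory of the Hardy--Littlewood maximal function. One small slip worth noting: the Minkowski reduction to \eqref{discretizationc} removes the scales $|j|\le j_0$ at the level of $L^r$-norms, not pointwise as you assert -- if $\gamma'$ vanished on one of those finitely many intermediate annuli (which \eqref{threshold1} does not exclude there), the pointwise $O_\gamma(Mg(x))$ bound could fail -- but this is harmless, since the $L^q$ estimate for the discarded scales is exactly what Theorem \ref{finfty} requires.
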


From \eqref{discretizationc}, recalling that $\gamma\in\NF$ and our choice of $j_0$ in \eqref{threshold} - \eqref{threshold1}, we notice that for all $x\in\R$,
\begin{align}
|M_\Gamma(f,g)(x)|&\le \|f\|_\infty\,M_\gamma\, g(x)\label{ginf}.
\end{align}
where here
\begin{equation}
M_\gamma\, g(x):=\sup\limits_{{j\in\Z}\atop{|j|>j_0}}\frac{1}{2^{j+1}}\int\limits_{2^j\le|t|\le2^{j+1}}  |g(x+\gamma(t))|dt\,.
\end{equation}

Thus, Theorem \ref{finfty} follows from the result below
\begin{theorem}\label{Mgamma}
	For any $1<q\le \infty$, one has
	\begin{equation}
	\|M_\gamma(g)\|_{L^q}\lesssim_{\gamma,q} \|g\|_q.
	\end{equation}
\end{theorem}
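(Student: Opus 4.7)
\medskip

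\textbf{Plan.} The strategy will be to establish the \emph{pointwise} majorization
\[
M_\g g(x)\;\lesssim_{\g}\; Mg(x),
\]
from which Theorem \ref{Mgamma} follows immediately from the classical $L^q$-boundedness of the Hardy--Littlewood maximal function for $1<q\leq\infty$. (The endpoint $q=\infty$ is in any case trivial, since each individual dyadic average is bounded above by $\|g\|_\infty$.) Because of the supremum structure it is enough to bound each dyadic average by a constant multiple of $Mg(x)$, uniformly in $j$.

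Fix $|j|>j_0$ and focus on the positive half $t\in[2^j,2^{j+1}]$; the contribution from $t\in[-2^{j+1},-2^j]$ is handled symmetrically. By \eqref{threshold1}, $\g'$ has constant sign on $[2^j,2^{j+1}]$, so $\g$ is strictly monotone there, and the change of variables $u=\g(t)$ is admissible. The $\NF$-hypothesis forces $\g$ to be polynomial-like away from $0$ and $\infty$; in particular $|\g'(t)|\approx_\g|\g'(2^j)|$ on the dyadic window $[2^j,2^{j+1}]$, so
\[
\frac{1}{2^{j+1}}\int_{2^j}^{2^{j+1}}|g(x+\g(t))|\,dt\;\lesssim_\g\;\frac{1}{2^{j}\g'(2^{j})}\int_{\g(2^j)}^{\g(2^{j+1})}|g(x+u)|\,du.
\]
Every value $\g(t)$ with $t\in[2^j,2^{j+1}]$ lies in $[-|\g(2^{j+1})|,|\g(2^{j+1})|]$, a symmetric interval which, after translation by $x$, is centered at $x$; enlarging the integration domain,
\[
\frac{1}{2^{j}\g'(2^{j})}\int_{\g(2^j)}^{\g(2^{j+1})}|g(x+u)|\,du\;\leq\;\frac{|\g(2^{j+1})|}{2^{j}\g'(2^{j})}\cdot\frac{1}{2|\g(2^{j+1})|}\int_{-|\g(2^{j+1})|}^{|\g(2^{j+1})|}|g(x+u)|\,du\;\leq\;\frac{|\g(2^{j+1})|}{2^{j}\g'(2^{j})}\,Mg(x).
\]

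The proof then reduces to the uniform comparison $|\g(2^{j+1})|\lesssim_\g 2^{j}\g'(2^{j})$ for $|j|>j_0$, i.e.\ to the relation $\g(t)\approx_\g t\,\g'(t)$ near $0$ and near $\infty$. This is precisely the \emph{quantitative} content of non-flatness: for the generalized polynomial model $\g(t)=\sum_k a_k t^{\a_k}$ with $\a_k\neq 1$ the ratio $\g(t)/(t\g'(t))$ is asymptotically governed by the reciprocal of the dominant exponent and is therefore bounded; the general case is built into the structural axioms of $\NF$ recorded in \cite[Section~2]{lie2015boundedness}. Extracting this comparison cleanly from those axioms is the one point that deserves care, but once it is in hand, taking the supremum over $j$ and combining with the symmetric treatment of $t<0$ yields the pointwise bound $M_\g g\lesssim_\g Mg$ and thus the theorem.
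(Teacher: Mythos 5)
Your proof is correct and follows essentially the same route as the paper's: both reduce to the pointwise bound $M_\gamma g \lesssim_\gamma Mg$ via the change of variables $u=\gamma(t)$ on each dyadic window, the comparability $|\gamma'(t)|\approx_\gamma|\gamma'(2^j)|$ there, enlargement to a symmetric interval around $x$, and finally the key comparison $|\gamma(2^{j+1})|\lesssim_\gamma 2^j|\gamma'(2^j)|$ drawn from the $\NF$ axioms. The paper records exactly this comparison (as $|\gamma(2^k)|\approx_\gamma 2^k|\gamma'(t)|\approx_\gamma|\gamma(2^{k+1})|$ for $t\in[2^k,2^{k+1}]$) and, like you, asserts it without further elaboration as a direct consequence of the class $\NF$.
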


\begin{proof}
From the properties of $\g$ and the choice of $j_0$ in \eqref{threshold} - \eqref{threshold1}, it is straightforward to check that for any $k\in \Z$ with $|k|>j_0$  and $t\in[2^k,2^{k+1}]$ one has
\begin{equation}\label{gammak1t}
|\gamma(2^{k})|\approx_{\gamma} 2^{k}|\gamma'(t)|\approx_{\gamma}|\gamma(2^{k+1})|\,.
\end{equation}

From \eqref{threshold1}, we deduce that $\gamma$ is strictly monotone and invertible over each of the intervals
$(-\infty,-2^{j_0})$, $(-2^{-j_0},0)$, $(0,2^{-j_0})$  and $(2^{j_0}, \infty)$. Wlog we consider from now on that our entire discussion takes place relative to $J_0:=(2^{j_0}, \infty)$ and that both $\g'$ and $\g''$ are strictly positive on $J_0$. Thus, for $k\in\Z$ with $k>j_0$, one has
\begin{align*}
\frac{1}{2^{k+1}}\int_{2^k}^{2^{k+1}}|g(x+\gamma(t))|dt &=\frac{1}{2^{k+1}}\int_{\gamma([2^k,2^{k+1}])}|g(x+u)|\frac{du}{|\gamma'(\gamma^{-1}(u))|}\\
&\le\frac{1}{2^{k+1}|\gamma'(2^k)|}\int_{\gamma(2^k)}^{\gamma(2^{k+1})}|g(x+u)|du\,.
\end{align*}
Appealing now to \eqref{gammak1t}, we deduce
\begin{equation}\label{ks}
\frac{1}{2^{k+1}}\int_{2^k}^{2^{k+1}}|g(x+\gamma(t))|dt \leq\frac{1}{2^{k+1}|\gamma'(2^k)|}\int_{-|\gamma(2^k)|-|\gamma(2^{k+1})|}^{|\gamma(2^k)|+|\gamma(2^{k+1})|}|g(x-u)|du
\lesssim_{\gamma} Mg(x).
\end{equation}

Analogously,
\begin{align*}
\frac{1}{2^{k+1}}\int_{-2^{k+1}}^{-2^k}|g(x+\gamma(t))|dt&\lesssim_{\gamma} Mg(x).
\end{align*}

Therefore
\begin{equation}
\sup_{{j\in\Z}\atop{|j|>j_0}}\,\frac{1}{2^{j+1}}\int\limits_{2^j\le|t|\le2^{j+1}} |g(x+\gamma(t))|dt\lesssim_{\gamma} Mg(x).
\end{equation}
Using now the standard theory on Hardy Littlewood maximal operator we conclude our Theorem \ref{Mgamma}.
\end{proof}

\medskip
\subsection{Behavior on $\{A\}\cup\{B\}$: triples $p=r=1,\, q=\infty$ and $p=\infty,\, q=r=1$}
$\newline$

In this section we show that our operator\footnote{Throughout this section we return to the original definition of our operator $M_\Gamma(f,g)$ in \eqref{definition}.} is unbounded at the points $\{A\}$ and $\{B\}$ (see Figure \ref{triangle_fig}).

$\newline$
\noindent\textbf{Case 1.} \textsf{Vertex $A$}:
$\newline$

We start by focusing on the vertex $A$, \ie, $p=r=1$ and $q=\infty$. In this case, we show that there are functions $f\in L^1(\R)$, $g\in L^\infty(\R)$, and a map $\gamma\in\NF$ such that $M_\Gamma(f,g)$ is not bounded on $L^1$.

Let $f=\chi_{[-1,1]}$ and $g=\chi_{(-\infty,1)}$ be characteristic functions, and let $\gamma(t)=t^2\in\NF$. Note that $f\in L^1(\R)$, $g\in L^\infty(\R)$ with  $\|f\|_1=2$, and $\|g\|_\infty=1$. Then
\begin{align*}
M_\Gamma(f,g)(x)=\sup_{\e>0}\frac{1}{2\e}\int\limits_{-\e}^\e |f(x-t)g(x-t^2)|dt\ge \frac{1}{2(|x|+1)}\int\limits_{-(|x|+1)}^{|x|+1}f(x-t)g(x-t^2)dt.
\end{align*}
Suppose $x>2$, then
\begin{align*}
M_\Gamma(f,g)(x)\ge \frac{1}{2(x+1)}\int\limits_{x-1}^{x+1}f(x-t)g(x-t^2)dt=\frac{1}{2(x+1)}\int\limits_{x-1}^{x+1}g(x-t^2)dt\gtrsim\frac{1}{x+1}.
\end{align*}
Therefore, $M_\Gamma(f,g)$ is not absolutely integrable.

\begin{observation}
Notice that if $f\in L^1$ and $g\in L^\infty$ we have by \eqref{1inf1}
\begin{align*}
\{x\in\R:M_\Gamma(f,g)(x)>\alpha\}\subseteq \Big\{x\in\R:Mf(x)>\frac{\alpha}{\|g\|_\infty}\Big\},
\end{align*}
for any $\alpha>0$.
Thus, from the classical theory, we get that for any $\alpha>0$ the following holds:
\begin{align*}
|\{x\in\R:M_\Gamma(f,g)(x)>\alpha\}|\lesssim \frac{1}{\alpha}\|f\|_1 \|g\|_\infty\:.
\end{align*}
\end{observation}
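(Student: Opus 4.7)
The observation asserts a weak-type endpoint bound for $M_\Gamma$: whenever $f\in L^1(\R)$ and $g\in L^\infty(\R)$, the distribution function of $M_\Gamma(f,g)$ at height $\alpha$ is controlled by $\|f\|_1\|g\|_\infty/\alpha$. This is the natural complement to the Case 1 analysis: there, the strong-type bound at vertex $A$ was ruled out by an explicit counterexample, and here we recover a clean weak-type endpoint that parallels the classical picture for the Hardy--Littlewood operator $M$. My plan is to reduce everything to $M$ via a two-step argument relying on \eqref{1inf1}.

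The first step is to invoke \eqref{1inf1}, namely the pointwise bound $|M_\Gamma(f,g)(x)|\le \|g\|_\infty\,Mf(x)$, which itself is just the consequence of pulling $\|g\|_\infty$ outside the integral defining $M_\Gamma$. A trivial dichotomy then gives, for every $\alpha>0$, the set inclusion
\[
\{x\in\R:M_\Gamma(f,g)(x)>\alpha\}\subseteq\Big\{x\in\R:Mf(x)>\alpha/\|g\|_\infty\Big\},
\]
thereby reducing the measurement of the $\alpha$-level set for the bilinear $M_\Gamma$ to that of the linear $M$.

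The second step is to take Lebesgue measure on both sides of this inclusion and apply the classical weak-type $(1,1)$ estimate $|\{Mf>\beta\}|\lesssim \|f\|_1/\beta$ with the choice $\beta=\alpha/\|g\|_\infty$; the desired conclusion $|\{M_\Gamma(f,g)>\alpha\}|\lesssim \|g\|_\infty\|f\|_1/\alpha$ follows at once. There is essentially no obstacle in this argument: the only non-trivial input is the weak-type $(1,1)$ bound for the Hardy--Littlewood maximal function, which is part of the classical theory, while everything else is a tautological consequence of \eqref{1inf1}. The conceptual content of the observation is that the failure of the \emph{strong} endpoint at vertex $A$ is nevertheless accompanied by a natural \emph{weak} endpoint, exactly mirroring what happens for $M$ at $L^1$.
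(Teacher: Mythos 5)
Your argument is correct and coincides with the paper's: you deduce the level-set inclusion from the pointwise bound \eqref{1inf1} and then apply the classical weak-type $(1,1)$ estimate for the Hardy--Littlewood maximal function with threshold $\alpha/\|g\|_\infty$. Nothing more is needed, and this is exactly what the observation intends.
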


$\newline$
\noindent\textbf{Case 2.} \textsf{Vertex $B$}:
$\newline$

In this situation we deal with the vertex $B$, given by $p=\infty,\, q=r=1$. We will show that there are functions $f\in L^\infty(\R)$ and $g\in L^1(\R)$ such that $M_\Gamma(f,g)$ is not bounded on $L^1$ for, say, $\gamma(t)=t^2$.

Let $f$ be the constant function 1, and let $g=\chi_{[-1,1]}\in L^1(\R)$. Then
\begin{align*}
M_\Gamma(f,g)(x)=\sup_{\e>0}\frac{1}{2\epsilon}\int_{-\e}^\e|g(x-t^2)|dt \simeq %\sup_{j\in\Z} \frac{1}{2^j}\int_{2^j}^{2^{j+1}} |g(x-t^2)|dt\\&\le  %
\sup_{\e>0}\frac{1}{\e}\int_\e^{2\e} |g(x-t^2)|dt.
\end{align*}

Notice that for any $\e>0$, we have
\begin{align*}
\frac{1}{\e}\int_\e^{2\e}|g(x-t^2)|dt=\frac{1}{\e}\int_{\e^2}^{4\e^2}\frac{|g(x-u)|}{2\sqrt{u}}du\ge \frac{1}{4\e^2}\int_{\e^2}^{4\e^2}|g(x-u)|du.
\end{align*}

Then, for $x>2$ we have
\begin{align*}
\frac{1}{\sqrt{(x+1)/4}} \int_{\sqrt{(x+1)/4}}^{2\,\sqrt{(x+1)/4}}|g(x-t^2)|dt\ge \frac{1}{x+1} \int\limits_{(x+1)/4}^{4(x+1)/4}  |g(x-u)|du \gtrsim \frac{1}{x+1}.
\end{align*}

Therefore, $M(f,g) \not\in L^1(\R)$.

\end{document}